	\newcommand{\Dirac}[1]{\slashed{\partial}_{#1}}
	\crefname{subsection}{subsection}{subsections} 
\newenvironment{Claimproof}
{\begin{adjustwidth}{0.25cm}{}
		\begin{proof}[Proof of claim.]
		}
		{ 
		\end{proof} 
	\end{adjustwidth}
}
\newtheoremstyle{theorems}
{1\baselineskip}
{0.75\baselineskip}
{\itshape}
{}
{\bfseries}
{.}
{3pt}
{}
\newtheoremstyle{lemmas}
{1\baselineskip}
{0.75\baselineskip}
{\itshape}
{}
{\bfseries}
{.}
{3pt}
{}
\newtheoremstyle{rest}
{1\baselineskip}
{1\baselineskip}
{}
{}
{\itshape} 
{\emph{:}}
{5pt}
{}
\newtheoremstyle{claim}
{1\baselineskip}
{0.5\baselineskip}
{\itshape}
{}
{}
{.}
{5pt}
{}
\theoremstyle{claim}
	\newtheorem{claim}{Claim}
\theoremstyle{theorems}
	\newtheorem{thm}{Theorem}[section]
	\newtheorem{prop}[thm]{Proposition}
\theoremstyle{lemmas}
	\newtheorem{lem}[thm]{Lemma}
	\newtheorem{cor}[thm]{Corollary}
	\newtheorem*{lem*}{Lemma} 
\theoremstyle{rest}
	\newtheorem{rmk}{Remark}
	\newtheorem{defn}{Definition}
	\newtheorem{example}{Example}
	\newtheorem*{fact*}{Fact}	
	\newtheorem*{recall*}{Recall}
	\newtheorem*{notation}{Notation}
\def\namedlabel#1#2{\begingroup#2%
	\def\@currentlabel{#2}%
	\phantomsection\label{#1}\endgroup
}
\DeclareMathOperator{\End}{End}
\DeclareMathOperator{\Kth}{K}
\DeclareMathOperator{\dom}{\mathrm{dom}}
\newcommand{\B}{\mathcal{B}}
\newcommand{\D}{\mathcal{D}}
\newcommand{\M}[1]{{\mathsf{M}_{#1}}} 
\newcommand{\NN}{\mathbb{N}}
\newcommand{\RR}{\mathbb{R}}
\renewcommand{\SS}{\mathbb{S}}
\newcommand{\Schw}{\mathcal{S}}
\newcommand{\ZZ}{\mathbb{Z}}
\newcommand{\abs}[1]{\left| #1 \right|}
\newcommand{\norm}[1]{\left\|#1\right\|} 
\newcommand{\supnorm}[1]{\left\| #1 \right\|_{\infty}\!}
\newcommand{\inner}[2]{\left\langle{ #1\,\vert\, #2}\right\rangle}
\newcommand{\biginner}[2]{\bigl\langle #1\,\vert\, #2\bigr\rangle}
\newcommand{\iinner}[2]{\left\llangle #1\,,\, #2\right\rrangle}
\newcommand{\Bigiinner}[2]{\Bigl\llangle #1\,,\, #2\Bigr\rrangle}
\newcommand{\Sinner}[3]{\left( #1\,\vert\, #2\right)^{S_{#3}}}
\newcommand{\bigSinner}[3]{\bigl( #1\,\vert\, #2\bigr)^{S_{#3}}}
\newcommand{\mi}{{\scriptscriptstyle{-1}}}	
\newcommand{\comm}[2]{\left[ #1, #2 \right]} 
\newcommand{\pv}[1]{\mathsf{pv} \left(\int #1\right)}
\begin{document}
\title{Elliptic Operators and $\Kth$\=/Homology}
\date{\today}
\address{Anna Duwenig\\
Department of Mathematics and Statistics\\
University of Victoria\\
Victoria, BC\\
Canada V8P 5C2}%
\author{Anna Duwenig}
\email{aduwenig@uvic.ca}
\subjclass[2010]{Primary 19K33; Secondary 58Jxx,  	46Fxx}
\keywords{K-Homology, Fredholm modules, distributional Fourier Transform}

\begin{abstract}
	If a differential operator~$D$ on a smooth Hermitian vector bundle~$S$ over a compact manifold~$M$ is symmetric, it is essentially self\-/adjoint and so admits the use of functional calculus. If~$D$ is also elliptic, then the Hilbert space of square integrable sections of $S$ with the canonical left $C(M)$\-/action and the operator $\chi(D)$ for~$\chi$ a normalizing function is a Fredholm module, and its $\Kth$\=/homology class is independent of $\chi$. In this expository article, we provide a detailed proof of this fact following the outline in the book ``Analytic K-homology'' by Higson and Roe.
\end{abstract}

\thanks{The author would like to thank an anonymous referee for helpful remarks.}
%

\maketitle

\section{Introduction}

A differential operator $D$ acting on the sections of a smooth Hermitian vector bundle \mbox{$S\stackrel{\pi}{\to} M$} over a compact manifold $M$ can be regarded as an unbounded operator on the Hilbert space~$\mathrm{L}^2 (M; S)$ of square integrable sections of~$S$. If $D$ is symmetric, then it is automatically essentially self\-/adjoint and hence we can use functional calculus. If $D$ is also elliptic, then~$\mathrm{L}^2 (M; S)$ with the canonical left $C(M)$\-/action by multiplication and the operator $\chi(D)$ for $\chi$ a normalizing function turns out to be a Fredholm module over $C(M)$, whose $\Kth$\-/homology class $[D]$ is independent of the choice of $\chi$. The goal of this paper is to give the details of the proof of \cite[Thm.\ 10.6.5]{HigRoe:KHom} in the compact case in order to make it more accessible. In particular, we compile the definitions and constructions from \cite[\S~8, \S~9]{HigRoe:KHom} that are needed to understand the theorem, we elaborate on aspects which are sparse on details (notably the proofs of Propositions~10.3.1,
10.3.5, 10.6.2 in \cite{HigRoe:KHom}), and we provide complete solutions to two crucial steps, namely \cite[Exercise~10.9.1]{HigRoe:KHom} and \cite[Exercise~10.9.3]{HigRoe:KHom}.

We start in \Cref{sec:KHom} by defining the $\Kth$\-/homology groups of a $C^*$\-/algebra $A$. These groups consist of equivalence classes of triples $(\nu,\mathcal{H}, F)$ where $\nu$ is a representation of $A$ on the Hilbert space $\mathcal{H}$ and $F$ is a bounded operator on $\mathcal{H}$ with additional properties. If $A$ is unital, these can be stated as: $F$ is essentially self\-/adjoint, is essentially unitary, and essentially commutes with the left $A$\-/action.

In \Cref{sec:unbddops}, we construct the Cayley Transform for densely defined self\-/adjoint unbounded operators. We conclude that these operators allow the use of functional calculus.

In \Cref{sec:ellops}, we first survey differential operators and prove some of their properties; for example, what their commutator with a multiplication operator looks like and that we can define their symbol independently of the choice of charts. In \Cref{subsec:Sobolev}, we study Sobolov spaces in order to make sense of (but not prove) G{\aa}rding's Inequality. In \Cref{subsec:NormalizingAndFT}, we prove the existence of normalizing functions whose distributional Fourier Transforms are supported in arbitrarily small intervals around~$0$, as is stated in \cite[Exercise~10.9.3]{HigRoe:KHom}. This is needed to show that~$\chi(D)$ essentially commutes with the left action.

At this point, we are equipped to dive into the proof of the main theorem, cf.\ \Cref{thm:main}, which is the content of \Cref{sec:Thm-main}. 

The appendix contains a detailed proof of the existence of Friedrichs' mollifiers, cf.\ \cite[Exercise~10.9.1]{HigRoe:KHom}. This tool is important to show that $D$ is essentially self\-/adjoint, so that it makes sense to consider \mbox{$F=\chi(D)$} in the main theorem.

We should point out that the assumption that~$D$ be elliptic is needed solely to invoke G{\aa}rding's Inequality. Therefore, we will not dwell upon ellipticity of~$D$, despite it being crucial for the construction of the $\Kth$\-/homology class~$[D]$ and despite the title of this paper. 


\section{Kasparov's $\mathrm{K}$\-/homology}\label{sec:KHom}

\subsection{Gradings}\label{ssec:gradings}
The material of this \namecref{ssec:gradings} is from \cite[Appendix~A]{HigRoe:KHom}.

	A \textit{$\mathbb{Z}/ {2} \mathbb{Z}$\=/grading} of a vector space $V$ is a direct sum decomposition into two subspaces 
	$V=V^{+} \oplus V^{-}$, the vectorspace's \textit{even} and \textit{odd} part. We will often just say that $V$ is \textit{graded}. Equivalently, $V$ is equipped with a vector space automorphism $\gamma$ such that $\gamma^2 = \mathrm{id}_V$, and we obtain the decomposition as \mbox{$V^{\pm} = \{ v\in V\,|\, \gamma(v)=\pm v\}$.}
	An element $v\in V$ is called \textit{homogeneous} if it is in one of these two subspaces, and its \textit{degree} is defined by
	\[
	\partial v =
	\left\{
		\begin{array}{ll}
			0 & \mbox{if } v\in V^+, \\
			1 & \mbox{if } v\in V^-.
		\end{array}
	\right.
	\]
	
	We define $V^{\mathrm{op}}$ to be $V$ as vector space but with reversed grading, that is,
	\[
		(V^{\mathrm{op}})^\pm
		:=
		V^\mp.
	\]
	For an endomorphism $T$ of $V$, we write $T^{\mathrm{op}}$ when we consider it as an endomorphism of $V^{\mathrm{op}}$.	
	The direct sum of two graded spaces $V, W$ is equipped with the grading
	\[
		(V\oplus W)^+
		:=
		V^+ \oplus W^+
		\;\text{ and }\;
		(V\oplus W)^-
		:=
		V^- \oplus W^-
		.
	\]	
	A Hilbert space is \textit{graded} if it is graded as a vector space, and its even and odd subspaces are closed and mutually orthogonal. Equivalently, the grading automorphism $\gamma$ is a bounded unitary operator.
\begin{example}
	A grading of a Hilbert space~$\mathcal{H}$ induces a grading on $\B(\mathcal{H})$ by deeming an operator $T$ even (resp.\ odd) if $T$ preserves (resp.\ reverses) the two subspaces. In terms of the grading operator $\gamma$, $T$ is even (resp.\ odd) if and only if $T\circ\gamma = \gamma\circ T$ (resp.\  $T\circ\gamma = - \gamma\circ T$). If we think of $\B(\mathcal{H})$ as
	\[
	\left(
	\begin{matrix}
	\B(\mathcal{H}^+) & \B(\mathcal{H}^-, \mathcal{H}^+)\\
	\B(\mathcal{H}^+, \mathcal{H}^-) & \B(\mathcal{H}^-)
	\end{matrix}
	\right),
	\]
	we see that
	\[
	T \text{ even} \iff
	T =
	\left(
	\begin{smallmatrix}
	* & 0\\
	0 & *
	\end{smallmatrix}
	\right),
	\;\;\text{ and }\;\;
	T \text{ odd} \iff
	T =
	\left(
	\begin{smallmatrix}
	0 & *\\
	* & 0
	\end{smallmatrix}
	\right).
	\]

	Moreover,
	\begin{align}
		\B(\mathcal{H})^\pm \circ \B(\mathcal{H})^\pm \subseteq \B(\mathcal{H})^+
		\;\;\text{ and }\;\;
		\B(\mathcal{H})^\pm \circ \B(\mathcal{H})^\mp \subseteq \B(\mathcal{H})^-,
	\end{align}
	and the adjoint preserves the grading: $T$ is even (resp.\ odd) if and only if $T^*$ is even (resp.\ odd). This makes $\B(\mathcal{H})$ \textit{graded as a $C^*$\=/algebra}.
\end{example}

\begin{example}[{\cite[Def.~11.2.2]{HigRoe:KHom}}]\label{ex:CliffordAlg}	
	Another example of a graded $C^*$\=/algebra is the complex \textit{Clifford algebra}: the complex unital $*$\=/algebra $\mathbb{C}_n$ is generated by~$n$ elements~$\varepsilon_{1},\ldots, \varepsilon_{n}$ which satisfy
	\begin{align}\label[property]{CliffordMult}
		\varepsilon_{i} \varepsilon_{j} + \varepsilon_{j} \varepsilon_{i} = 0 \text{ for } i\neq j,
		\quad
		\varepsilon_{i}^* = -\varepsilon_{i},
		\quad
		\;\text{ and }\;
		\quad
		\varepsilon_{i}^2 = -1.
	\end{align}
	By deeming the basis $\{\varepsilon_{j_{1}}\cdots \varepsilon_{j_{k}}:j_{1} < \ldots <j_{k},0\leq k \leq n\}$ orthonormal, $\mathbb{C}_n$ becomes a Hilbert space. The left\-/action by multiplication is then a faithful $*$\=/representation of $\mathbb{C}_n$ on $\mathbb{C}_n$, which makes it a $C^*$\=/algebra. An element $\varepsilon_{j_{1}}\cdots \varepsilon_{j_{k}}$ is regarded as even (resp.\ odd) if $k$ is even (resp.\ odd).
\end{example}

\subsection{Fredholm modules}\label{ssec:Fmod}
For a separable $C^*$\=/algebra $A$, we recall the following definitions from \cite[\S 8.1, \S 8.2]{HigRoe:KHom}.
\begin{defn}
	A \emph{Fredholm module over $A$} is a triple $(\nu,\mathcal{H},F)$ consisting of
	\begin{enumerate}
		\item a representation $\nu\colon  A\to\B(\mathcal{H})$ on a separable Hilbert space $\mathcal{H}$, and
		\item an operator $F\in \B(\mathcal{H})$ such that
	\end{enumerate}
	\begin{equation}
		\quad\nu(a) ( F^* - F),\; \nu(a) ( F^2 - 1),\; [\nu(a), F]
		\tag*{are compact for all $a\in A$.}
	\end{equation}
\end{defn}

It is sometimes helpful to be more precise and call such triples \emph{ungraded} or \emph{odd} Fredholm modules, in order to distinguish them from \emph{graded} (sometimes also called \emph{even}) \emph{Fredholm modules}: 
	
\begin{defn}	
	A \textit{graded Fredholm module} is a Fredholm module $(\nu,\mathcal{H},F)$ over $A$ such that
	\begin{enumerate}\setcounter{enumi}{2}
		\item $\mathcal{H}$ is $\mathbb{Z}/ {2} \mathbb{Z}$\=/graded,
		\item the operator $F$ is odd, and all operators $\nu(a)$ are even.
	\end{enumerate}
\end{defn}

\begin{defn}\label{def:FredholmModStuff}
	\begin{enumerate}[label=\textup{(\arabic*)}]
		\item Two (graded) Fredholm modules are called \textit{unitarily equivalent} if there exists a (grading preserving) unitary isomorphism $U$ between the Hilbert spaces which intertwines the representations of $A$ and the distinguished bounded operators.
		\item 	An \textit{operator homotopy} between (graded) Fredholm modules $(\nu, \mathcal{H}, F_{0})$ and \linebreak[4]$(\nu, \mathcal{H}, F_{1})$ is a family  \mbox{$\{(\nu, \mathcal{H}, F_{t})\}_{t\in [0,1]}$} of (graded) Fredholm modules such that \mbox{$[0,1]\to \B(\mathcal{H}), \,t\mapsto F_{t},$} is norm continuous.
		\item
			We say that a (graded) Fredholm module $(\nu, \mathcal{H}, F')$ is a \textit{compact perturbation} of a (graded) Fredholm module $(\nu, \mathcal{H}, F)$ if the operator $\nu(a)(F-F')$ is compact for all $a\in A$.
		\item A (graded) Fredholm module $(\nu, \mathcal{H}, F)$ is called \textit{degenerate} if the operators
		\begin{equation*}
				\nu(a) ( F^* - F),\quad \nu(a) ( F^2 - 1),\quad [\nu(a), F]
		\end{equation*}
		are zero, and not just compact, for all $a\in A$.
	\end{enumerate}
\end{defn}

\begin{prop}\label{CompactPertImpliesHomotopic}
	Two \textup(graded\textup) Fredholm modules which are compact perturbations of one another are operator homotopic.
\end{prop}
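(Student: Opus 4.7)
The plan is to use the straight-line homotopy $F_t := (1-t)F_0 + tF_1$ for $t \in [0,1]$, or equivalently $F_t = F_0 + tK$ where $K := F_1 - F_0$. Norm continuity of $t \mapsto F_t$ is immediate. In the graded case, both $F_0$ and $F_1$ are odd, and the odd part $\B(\mathcal{H})^-$ is a linear subspace, so each $F_t$ is again odd, while the representation~$\nu$ and its evenness hypothesis are unchanged along the homotopy. It therefore remains only to check that $(\nu, \mathcal{H}, F_t)$ is a Fredholm module for every $t \in [0,1]$.

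Two of the three compactness conditions follow from linearity. Since
\[
F_t^* - F_t = (1-t)(F_0^* - F_0) + t(F_1^* - F_1),
\]
multiplying by $\nu(a)$ yields a linear combination of compact operators, and similarly
\[
[\nu(a), F_t] = (1-t)[\nu(a), F_0] + t[\nu(a), F_1]
\]
is compact for every $a \in A$.

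The main obstacle, though still routine, is the condition on $F_t^2 - 1$, because squaring is not linear. Expanding $F_t = F_0 + tK$ gives
\[
\nu(a)(F_t^2 - 1) = \nu(a)(F_0^2 - 1) + t\,\nu(a) F_0 K + t\,\nu(a) K F_0 + t^2 \nu(a) K^2.
\]
The first summand is compact by hypothesis on $F_0$, and the hypothesis that $(\nu, \mathcal{H}, F_1)$ is a compact perturbation of $(\nu, \mathcal{H}, F_0)$ says precisely that $\nu(a) K$ is compact for every $a \in A$. Hence $\nu(a) K F_0$ and $\nu(a) K^2 = (\nu(a) K) K$ are compact, as compact operators form a two-sided ideal in $\B(\mathcal{H})$. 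For the remaining term I would rewrite
\[
\nu(a) F_0 K = [\nu(a), F_0] K + F_0 \,\nu(a) K,
\]
so that it decomposes as a product of a compact operator and a bounded operator plus a bounded operator and a compact operator; both are compact. This completes the verification, and hence $\{(\nu, \mathcal{H}, F_t)\}_{t \in [0,1]}$ is the desired operator homotopy.
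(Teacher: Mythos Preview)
Your proof is correct and uses exactly the same straight-line homotopy $F_t = (1-t)F_0 + tF_1$ as the paper; the paper simply asserts that this ``can be quickly checked'' without spelling out the verification. Your handling of the non-linear term $\nu(a)(F_t^2-1)$ via the commutator identity $\nu(a)F_0 K = [\nu(a),F_0]K + F_0\,\nu(a)K$ is precisely the detail the paper omits.
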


\begin{proof}
	The straight line from the operator $F$ to its compact perturbation $F'$, given by \mbox{$F_{t} := (1-t) F + t F' $} for $t\in[0,1]$, can be quickly checked to give the claimed homotopy of (graded) Fredholm modules.
\end{proof}

\subsection{The K-Homology groups}\label{ssec:KHomgps}

Since the sum of two (graded) Fredholm modules --given by the direct sum of Hilbert spaces, of representation, and of operators-- is again a  (graded) Fredholm module, we arrive at the following \namecref{def:KHom} for the $\Kth$\=/homology groups:

\begin{defn}[{\cite[Def.\ 8.2.5]{HigRoe:KHom}}]\label{def:KHom}
	For a separable $C^*$\=/algebra $A$, let $\Kth^{0}(A)$ be the abelian group with one generator $[x]$ for each unitary equivalence class of \emph{graded} Fredholm modules over $A$, subject to the following relations:
	\begin{enumerate}[label=\textup{(\arabic*)}]
		\item If $x, y$ are two such Fredholm modules, then $[x\oplus y] = [x]+[y]$, and
		\item two operator homotopic modules give the same class.
	\end{enumerate}
	Similarly, let $\Kth^{1}(A)$ be the abelian group with one generator for each unitary equivalence class of \emph{ungraded} Fredholm modules over $A$, subject to the same relations.
\end{defn}

\begin{rmk}[{\cite[Prop.~8.2.8 and~8.2.10; Cor.~8.2.11]{HigRoe:KHom}}]
	The sum of the Fredholm modules $x=\bigl(\nu,\mathcal{H},F\bigr)$ and \mbox{$y=\bigl(\nu^\mathrm{op},\mathcal{H}^\mathrm{op},-F^\mathrm{op}\bigr)$} is homotopic to a degenerate. Since the degenerate modules are zero in $\Kth^{j}(A)$ for $j=0,1$, we conclude that the classes $[x]$ and \mbox{$[y]$} in $\Kth$-homology are each other's additive inverse.
%
	Consequently, every element of $\Kth^{j}(A)$ can be represented by a single (graded) Fredholm module.
\end{rmk}

\begin{rmk}[{\cite[Lemma~8.3.8]{HigRoe:KHom}}]
	For the definition of the $\Kth$\-/homology groups, one can restrict to those Fredholm modules $(\nu,\mathcal{H},F)$ for which $\nu (A)\mathcal{H}$ is dense in~$\mathcal{H}$. In view of \Cref{def:FredholmModStuff}, we will refrain from calling such Fredholm modules ``non\-/degenerate''. But in the case where $A$ is unital, we can call them \textit{unital} Fredholm modules, since $\overline{\nu (A)\mathcal{H}}=\mathcal{H}$ is then equivalent to $\nu(1_A)=\mathrm{id}_{\mathcal{H}}$.
\end{rmk}
	
\begin{rmk}[{\cite[Def.\ A.3.1, 8.1.11, 8.2.5; Prop.\ 8.2.13,~8.8.5]{HigRoe:KHom}}]
	It is possible to define `higher' $\Kth$-homology groups $\Kth^{-p}(A)$ of $A$ for $p>0$, built from Fredholm modules with the additional datum of a $p$-multigrading on the Hilbert space.
	This collection of groups satisfy Bott periodicity, that is, there exists an isomorphism \mbox{$\Kth^{-p}(A)\to \Kth^{-p-2}(A)$.} For our purposes, it will be sufficient to focus on  
	$\Kth^{0}$ and~$\Kth^{1}$.
\end{rmk}

For $B$ another separable $C^*$\=/algebra and a unital $*$\=/homomorphism $\alpha\colon  B \to A$, we can turn a (graded) Fredholm module $(\nu, \mathcal{H}, F)$ over $A$ into one over $B$ by considering $(\nu\circ \alpha, \mathcal{H}, F)$. This process respects addition and unitary equivalence, and hence descends to a map on the level of $K$\=/homology,
\[
	 \Kth^{j}(\alpha) = \alpha^*\colon  \Kth^{j} (A) \to \Kth^{j} (B),\quad j=0,1.
\]
It is easily checked that the assignment $A\mapsto \Kth^{j} (A)$, $\alpha\mapsto \alpha^*$, is a contravariant functor from the category of separable $C^*$\=/algebras to the category of abelian groups.

\section{Unbounded operators}\label{sec:unbddops}
\subsection{Terminology}\label{ssec:unbddop-terms}
		An \textit{unbounded operator} $D$ on a Hilbert space $\mathcal{H}$ is a linear map from a subspace $\dom  D \subseteq \mathcal{H}$ into $\mathcal{H}$. If $\dom  D$ is dense, then let
		\begin{align*}
			\dom  D^* 
			:&=
			\bigl\{
				\eta \in \mathcal{H}\;|\; \dom  D \ni \xi \mapsto \inner{D \xi}{\eta } \text{ is bounded}
			\bigr\}\\
			&=
			\bigl\{
				\eta \in \mathcal{H}\;|\; \exists \chi\in\mathcal{H} :  \forall  \xi  \in \dom  D : \inner{D \xi }{ \eta} = \inner{\xi }{\chi}
			\bigr\}
			.
		\end{align*}
		For $\eta  \in \dom  D^* $, define $D^* \eta $ to be the unique vector such that $\inner{D \xi}{\eta } = \inner{\xi}{D^*\eta }$ for all~$\xi \in \dom  D $. The operator~$D^*$ is linear on its domain, and is called the \textit{adjoint} of~$D$. 
		
		The operator $D$ is called ...
		\begin{itemize}
			\item \textit{closed} if the graph of $D$ is a closed subset of $\mathcal{H}\oplus \mathcal{H}$.
			\item \textit{closable} if the closure of its graph is the graph of a function. This function is then \textit{the closure $\overline{D}$ of $D$}.
			\item an \textit{extension} of an unbounded operator $D'$ if $\dom D' \subseteq \dom  D$ and $D = D'$ on $\dom  D' $.
			\item \textit{symmetric} if $\inner{D \xi}{\eta }= \inner{ \xi }{D \eta  }$ for all $ \xi, \eta \in \dom  D $; in other words, if $D^*$ extends $D$.
			\item \textit{self-adjoint} if $\dom  D^*  = \dom  D$ and $D^* \xi = D \xi$ for all $\xi \in \dom  D$.
			\item \textit{essentially self-adjoint} if $D$ is symmetric and 
			$\dom  \overline{D}\ =\dom  D^*$.
		\end{itemize}
	Note that every symmetric operator $D$ is closable, and satisfies $\inner{D \xi}{\xi } \in\RR$ for $\xi\in\dom D$.	Moreover, for such $D$, $\dom \overline{D}$ is sometimes called the \textit{minimal domain} of $D$ and $\dom {D^*}$ the \textit{maximal domain} of $D$.

	\begin{example}[{\cite[Example 3 in Chapter VIII, Section 1]{ReedSimon:Methods}}]\label{ex:unbb-op-on-T}
		On $\mathcal{H}=L^2 (\mathbb{R})$, the assignment $Df = -\mathbf{i}\frac{\partial f}{\partial r}$ with domain $C_0^\infty (\mathbb{R})$ is an unbounded operator which is symmetric and hence closable (see \cite[Chapter VIII, Section 2]{ReedSimon:Methods}). A slight variant of this example is the unbounded symmetric operator $Df = -\mathbf{i}\frac{\partial f}{\partial\theta}$ on $L^2 (\mathbb{T})$ with domain $C^\infty (\mathbb{T})$.
		
		Many other examples of unbounded operators on Hilbert spaces can be found in \cite[Chapter VIII]{ReedSimon:Methods}, including some pathological ones like the last example in Section~3 
		and Problem~4: 
		both discuss symmetric operators, one with uncountably many and one with no self-adjoint extensions.
	\end{example}

\subsection{The Cayley Transform and Borel functional calculus}\label{ssec:Cayley-FuncCalc}

\begin{lem}[{\cite[Thm.\ VIII.3]{ReedSimon:Methods}}; {\cite[I.7.3.3]{Blackadar:OA}}]\label{SR:ResSurjective}
	If $D$ is a symmetric and densely defined unbounded operator on $\mathcal{H}$, then $D$ is self\-/adjoint if and only if $D\pm\mathbf{i}$ are both surjective. Moreover, in that case, $(D\pm\mathbf{i})^\mi$ is everywhere defined and a bounded operator.
\end{lem}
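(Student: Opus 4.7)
The entire argument pivots on one identity: for $\xi \in \dom D$, symmetry of $D$ forces $\inner{D\xi}{\xi} \in \RR$, and expanding the norm yields
\[
    \norm{(D\pm\mathbf{i})\xi}^2 = \norm{D\xi}^2 + \norm{\xi}^2.
\]
I would open the proof with this identity, because it immediately gives three useful consequences: $D\pm\mathbf{i}$ is injective on $\dom D$; the inverse on the range is bounded with norm $\leq 1$; and the range of $D\pm\mathbf{i}$ is closed precisely when $D$ is closed (by standard graph arguments). This already settles the ``moreover'' clause once surjectivity is established.

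For the forward direction, assume $D$ is self\-/adjoint. To show $D+\mathbf{i}$ is surjective, it suffices to prove that its range is closed and has trivial orthogonal complement. Closedness follows from the identity above because self\-/adjoint operators are closed. For the complement, note that $\eta \perp \mathrm{range}(D+\mathbf{i})$ says exactly that $\eta \in \dom D^*$ with $D^*\eta = \mathbf{i}\eta$; since $D^*=D$, symmetry gives $\mathbf{i}\norm{\eta}^2 = \inner{D\eta}{\eta}\in\RR$, forcing $\eta=0$. The argument for $D-\mathbf{i}$ is identical.

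For the converse, assume both $D\pm\mathbf{i}$ are surjective. We already have $D\subseteq D^*$ by symmetry, so it remains to show $\dom D^* \subseteq \dom D$. Given $\eta \in \dom D^*$, use surjectivity of $D-\mathbf{i}$ to find $\xi \in \dom D$ with $(D-\mathbf{i})\xi = (D^* - \mathbf{i})\eta$. Since $\xi\in\dom D\subseteq \dom D^*$, this rewrites as $(D^* - \mathbf{i})(\xi - \eta)=0$, i.e., $\xi - \eta \perp \mathrm{range}(D+\mathbf{i})$. Surjectivity of $D+\mathbf{i}$ forces $\xi = \eta$, so $\eta\in\dom D$, as required.

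I do not anticipate a genuine obstacle; the only subtle point is making sure to invoke the characterization $\mathrm{range}(D+\mathbf{i})^\perp = \ker(D^* - \mathbf{i})$, which is a direct unpacking of the definition of the adjoint for a densely defined operator. The boundedness of $(D\pm\mathbf{i})^{\mi}$ with norm at most $1$ then drops out of the opening identity without further work.
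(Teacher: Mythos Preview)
Your proof is correct and follows essentially the same path as the paper: both hinge on the identity $\norm{(D\pm\mathbf{i})\xi}^2 = \norm{D\xi}^2 + \norm{\xi}^2$, extract injectivity and closed range from it, and then kill the orthogonal complement of the range via $\ker(D^*\mp\mathbf{i}) = \mathrm{range}(D\pm\mathbf{i})^\perp$. The paper explicitly omits the converse direction as unneeded for its purposes, so your inclusion of that argument is additional but entirely sound.
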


\begin{proof}
	Regarding the equivalence, we will actually only be interested in the forward implication, so let us disregard the proof of the other direction.
	We will follow the explanation given in \cite[I.7.3.3]{Blackadar:OA}.
	
	As~$D$ is self\-/adjoint, it is closed and the domains of $(D\pm\mathbf{i})^*$ and $D\mp \mathbf{i}$ both coincide with $\dom D$. Since for all~$\xi ,\eta\in  \dom D$,
	\[
		\inner{(D\pm\mathbf{i}) \xi }{\eta}
		=
		\inner{D\xi }{\eta} \pm\inner{\mathbf{i} \xi }{\eta}
		=
		\inner{\xi }{D\eta} \mp \inner{\xi }{\mathbf{i} \eta}
		=
		\inner{\xi }{(D\mp\mathbf{i}) \eta},
	\]
	we see that~$D\mp\mathbf{i}$ satisfies the universal property that determines~$(D\pm\mathbf{i})^*$ uniquely, so $D\mp\mathbf{i} = (D\pm\mathbf{i})^*$.	
	\begin{claim}\label{Dpmi-injective}
		For~$\xi\in\dom D$, we have~$\norm{(D\pm\mathbf{i})\xi}\geq \norm{\xi}$, so that $D\pm\mathbf{i}$ is bounded below by~$1$. In particular,~$D\pm\mathbf{i}$ is injective.
	\end{claim}
	\begin{Claimproof}
			For~$\xi\in \dom (D\pm\mathbf{i})=\dom D $, we have because of~$D=D^*$
				\begin{align}\label{normDpmi=normDmpi}
					\norm{(D\pm\mathbf{i})\xi}^2
					&=
					\norm{D\xi}^2
					\pm
					\inner{\mathbf{i} \xi}{D\xi}
					\pm
					\inner{D\xi}{\mathbf{i} \xi}
					+
					\norm{\mathbf{i} \xi}^2
					=
					\norm{D\xi}^2
					+
					\norm{\xi}^2.
				\end{align}
			Injectivity is now clear.
	\end{Claimproof}
%

\begin{claim}\label{rangeclosed}
	Since~$D\pm\mathbf{i}$ is bounded below and~$D$ is closed, the range of~$D\pm\mathbf{i}$ is closed.
\end{claim}
\begin{Claimproof}
	A straightforward computation shows that~$D\pm\mathbf{i}$ is closed because~$D$ is. If~$T\xi_{n} \to \eta$ for~$T:=D\pm\mathbf{i}$ and some~$\xi_{n}\in\dom T = \dom D$, then~$(T\xi_{n})_{n}$ is a Cauchy sequence. The previous claim shows
	\[
		\norm{T(\xi_{n} - \xi_{m})} \geq \norm{\xi_{n} - \xi_{m}},
	\]
	so we see that~$(\xi_{n})_{n}$ is also Cauchy and hence converges to some~$\xi$. As~$T$ is closed and~$(\xi_{n}, T\xi_{n})_{n}$ is a sequence in its graph that converges, we must have~$\xi\in\dom T$ and \mbox{$T\xi_{n} \to T\xi$.}
\end{Claimproof}
%
%
\begin{claim}
	$D\pm\mathbf{i}$ has dense range.
\end{claim}
\begin{Claimproof}
	If~$\xi\in \mathrm{range}(D\pm\mathbf{i})^\perp$, then~$\inner{(D\pm\mathbf{i})\nu}{\xi}=0=\inner{\nu}{0}$ for all \mbox{$\nu\in\dom D$.} In particular,~$\xi$ is in~$\dom (D\pm\mathbf{i})^*$ with~$0=(D\pm\mathbf{i})^* \xi =(D\mp\mathbf{i})\xi $. Thus,
	\[
		\overline{\mathrm{range}(D\pm\mathbf{i})}
		=
		\mathrm{range}(D\pm\mathbf{i})^{\perp\perp}
		\supset
		\mathrm{ker}(D\mp\mathbf{i})^\perp.
	\]
	Since~$\mathrm{ker}(D\mp\mathbf{i})=\{0\}$ by \Cref{Dpmi-injective}, $D\pm\mathbf{i}$ thus indeed has dense range.
\end{Claimproof}

All in all, we have shown that~$D\pm\mathbf{i}$ is both injective on~$\dom D$ and surjective. Therefore, there exists a linear map \mbox{$(D\pm\mathbf{i})^\mi \colon \mathcal{H}\to\dom D\subseteq\mathcal{H}$} which is inverse to~$D\pm\mathbf{i}$. Lastly, since $\norm{(D\pm\mathbf{i})\xi}\geq \norm{\xi}$, we conclude~$\norm{(D\pm\mathbf{i})^\mi} \leq 1$.
\end{proof}
\setcounter{claim}{0}

\begin{defn}[{\cite[I.7.2.5. Def.]{Blackadar:OA}}]
	For $D$ a densely defined unbounded operator on $\mathcal{H}$, the \textit{spectrum} $\sigma(D)$ of $D$ is defined as
	\begin{align*}
		\sigma(D) := \mathbb{C}\setminus \bigl\{ z\in\mathbb{C}\,\vert\,	&D-z \text{ is injective on } \dom (D -z)= \dom D
		\\ &\text{with dense range, and } (D-z)^\mi \text{ is bounded}
		\bigr\}.
	\end{align*}
\end{defn}

\begin{rmk}\label{fullrange}
	Note that the proof of \Cref{SR:ResSurjective} also works for any other $z\in\mathbb{C}\setminus\RR$ in place of $\mathbf{i}$. Thus we have shown that, if $D$ is self\-/adjoint, $\sigma(D)\subseteq \RR$.	
	Also, it follows from \Cref{rangeclosed} that, if~$D$ is closed and $D-z$ is bounded below, then $D-z$ has closed range. So if $z\notin \sigma(D)$, then the range of $D-z$ is all of $\mathcal{H}$.	
\end{rmk}

\begin{defn}[cf.\ {\cite[I.7.4.1.\ ff.]{Blackadar:OA}}]\label{def:Cayley}
	We define
	\[
		c\colon \RR\to \SS^1\setminus\{1\},
		\quad
		c(t)=\frac{t+\mathbf{i}}{t-\mathbf{i}},
		\qquad
		\text{ with inverse }
		\quad
		c^\mi(z)=\mathbf{i} \frac{z+\mathbf{i}}{z-\mathbf{i}}.
	\]
	If $D$ is a densely defined self\-/adjoint operator on $\mathcal{H}$, then \Cref{SR:ResSurjective} shows that it makes sense to define
	\[
		c(D) := (D+\mathbf{i}) (D-\mathbf{i})^\mi\colon 
		\mathcal{H}\to\mathcal{H},
	\]
	and that this map is an isomorphism of $\mathcal{H}$. It is called the \textit{Cayley Transform of $D$}. From \Cref{normDpmi=normDmpi}, we see that
	\mbox{$
		\norm{ (D+\mathbf{i})\xi} = \norm{ (D-\mathbf{i})\xi}
	$,} 
	so $c(D)$ is even a unitary. Moreover, it does not have $1$ in its spectrum: if $c(D)\xi = \xi$, then for $\xi'= (D-\mathbf{i})^\mi \xi$ we have~\mbox{$(D+\mathbf{i})\xi' = (D-\mathbf{i})\xi'$,} that is~\mbox{$\mathbf{i} \xi' = -\mathbf{i} \xi'$.} Thus, $\xi'=0$ and hence $\xi = 0$, so that we have shown that $c(D)-1$ is injective. On the other hand, if $\eta\in\mathcal{H}$ is arbitrary, let $\xi := -\frac{1}{2}\mathbf{i} (D-\mathbf{i})\eta$ and compute
	\[
	\bigl(c(D) - 1\bigr) \xi
	=
	(D+\mathbf{i})(D-\mathbf{i})^\mi \xi - \xi
	= 
	-\frac{1}{2}\mathbf{i} (D+\mathbf{i})\eta + \frac{1}{2}\mathbf{i} (D-\mathbf{i})\eta
	=
	\eta,
	\]
	so we have shown that $c(D)-1$ is also surjective.
%
%

	Conversely, if $U$ is a unitary which does not have $1$ as eigenvalue, then $U-1$ has dense range: if $\xi\in \mathrm{range}(U-1)^\perp$, then $\inner{(U-1)\eta}{\xi}=0$ for all $\eta\in\mathcal{H}$, so $(U^* -1 ) \xi = 0$. Injectivity of $U-1$ then implies $\xi =0$.
	Therefore, the so\-/called \textit{inverse Cayley Transform} of $U$ defined by
	\[
		c^\mi(U) := \mathbf{i}(U+1)(U-1)^\mi
		\colon 
		\mathrm{range}(U-1) \to \mathcal{H},
	\]
	is densely defined.
	
\end{defn}
	
\begin{lem}[{\cite[3.5.\ Corollary]{Conway:FA}}]
	The inverse Cayley Transform of a unitary which does not have $1$ as eigenvalue is a self\-/adjoint operator.
\end{lem}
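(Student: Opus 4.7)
The plan is to show that $D:=c^{-1}(U)=\mathbf{i}(U+1)(U-1)^{-1}$, defined on $\dom D = \mathrm{range}(U-1)$, meets the hypotheses of \Cref{SR:ResSurjective} and is therefore self-adjoint. Recall from \Cref{def:Cayley} that $\dom D$ is already known to be dense in $\mathcal{H}$, so the three things left to verify are symmetry of $D$ and surjectivity of $D\pm\mathbf{i}$.

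To prove symmetry, I would take two arbitrary elements $\xi,\eta\in\dom D$ and write them as $\xi=(U-1)\xi'$ and $\eta=(U-1)\eta'$ for (unique) $\xi',\eta'\in\mathcal{H}$; then $D\xi=\mathbf{i}(U+1)\xi'$ and similarly for $\eta$. Expanding $\inner{D\xi}{\eta}$ and $\inner{\xi}{D\eta}$ using $U^*U=1$ causes the $\inner{\xi'}{\eta'}$ terms to cancel in both expressions, and both reduce to $\mathbf{i}\bigl(\inner{U\xi'}{\eta'}-\inner{\xi'}{U\eta'}\bigr)$. This is a short, direct calculation.

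The key algebraic observation is then that on $\dom D$,
\[
D-\mathbf{i}=\mathbf{i}\bigl((U+1)-(U-1)\bigr)(U-1)^{-1}=2\mathbf{i}\,(U-1)^{-1},
\qquad
D+\mathbf{i}=2\mathbf{i}\,U(U-1)^{-1}.
\]
Since $U-1\colon\mathcal{H}\to\mathrm{range}(U-1)$ is a bijection (it is injective by the eigenvalue hypothesis, and trivially onto its range), the map $(U-1)^{-1}$ is a bijection from $\dom D$ onto $\mathcal{H}$. Composing with the scalar $2\mathbf{i}$ (resp.\ with the unitary $2\mathbf{i} U$) preserves this bijectivity, so both $D-\mathbf{i}$ and $D+\mathbf{i}$ map $\dom D$ onto $\mathcal{H}$.

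Having established that $D$ is densely defined, symmetric, and $D\pm\mathbf{i}$ are surjective, \Cref{SR:ResSurjective} immediately yields that $D$ is self-adjoint. I do not foresee any serious obstacle here; the only subtlety is to remember that $\dom D$ consists precisely of those vectors of the form $(U-1)\xi'$, which is what makes the formulas for $D\pm\mathbf{i}$ make sense as genuine operator identities on $\dom D$ rather than as formal manipulations.
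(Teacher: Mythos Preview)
Your argument is mathematically correct, but it takes a different route from the paper's own proof. The paper establishes self-adjointness directly: after noting symmetry (as you do), it takes an arbitrary $\xi\in\dom D^*$, uses the defining property of the adjoint to obtain $-\mathbf{i}(U^*+1)\xi=(U^*-1)\eta$ for some $\eta$, and manipulates this into $2\xi=(1-U)(\mathbf{i}\eta+\xi)$, which places $\xi$ in $\mathrm{range}(U-1)=\dom D$. This is a bare-hands verification of $\dom D^*\subseteq\dom D$.

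Your approach instead exploits the clean algebraic identities $D\mp\mathbf{i}=2\mathbf{i}\,(U-1)^{-1}$ and $D+\mathbf{i}=2\mathbf{i}\,U(U-1)^{-1}$ to see surjectivity of $D\pm\mathbf{i}$ at once, and then appeals to the reverse implication of \Cref{SR:ResSurjective}. This is arguably more conceptual and makes the duality between $D$ and its Cayley transform transparent: one sees immediately that $c(D)=(D+\mathbf{i})(D-\mathbf{i})^{-1}=U$. One caveat, however: the paper explicitly states that it will ``only be interested in the forward implication'' of \Cref{SR:ResSurjective} and omits the proof of the direction you need (symmetric plus $D\pm\mathbf{i}$ surjective implies self-adjoint). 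That direction is of course standard and available in the cited references, but within the self-contained logic of the exposition you are invoking something the paper chose not to establish. If you want to stay internally consistent, you could either supply the short missing argument for that implication, or simply observe that your formula $D-\mathbf{i}=2\mathbf{i}(U-1)^{-1}$ already gives, for any $\xi\in\dom D^*$, an explicit preimage showing $\xi\in\mathrm{range}(U-1)$---which is essentially what the paper's direct computation does.
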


	\begin{proof}
		A quick computation shows that $c^\mi(U)$ is symmetric, so we only need to check that the domain of its adjoint is contained in \mbox{$\mathrm{range}(U-1)$.} If \mbox{$\xi\in\dom (c^\mi(U))^*$,} then there exists $\eta\in \mathcal{H}$ such that for all \mbox{$\nu'\in \mathrm{range}(U-1)$,} we have
		\[
			\inner{c^\mi(U) \nu'}{\xi} = \inner{\nu'}{\eta}.
		\]
		In other words, for every $\nu' = (U-1)\nu$,
		\[
			\inner{\mathbf{i}(U+1)\nu}{\xi} = \inner{(U-1)\nu}{\eta}.
		\]
		Since this holds for every $\nu\in\mathcal{H}$, it follows that $-\mathbf{i} (U^*+1)\xi = (U^* -1)\eta$. By applying $\mathbf{i} U$ to both sides, we get
		\[
			\xi + U \xi = (1+U) \xi = (1-U) \mathbf{i} \eta  = \mathbf{i} \eta - U\mathbf{i} \eta.		
		\]
		Rearranging and adding $\xi$ to both sides yields
		\[
				2\xi  = (\mathbf{i} \eta - U\mathbf{i} \eta -  U \xi) + \xi = (1-U) (\mathbf{i} \eta + \xi),
		\]
		so $\xi\in \mathrm{range}(U-1)$ as claimed.
	\end{proof}
If $1\notin \sigma(U)$, then it follows from our comment in \Cref{fullrange} that $c^\mi(U)$ is actually everywhere defined and bounded. One can check that
	\[
		c^\mi (c(D)) = D
		\;\text{ and }\;
		c ( c^\mi (U) ) =U,
	\]
so we have found:

\begin{prop}[{\cite[3.5.\ Corollary; 3.1.\ Theorem]{Conway:FA}}]
	The Cayley Transform is a bijective map from the densely defined, self\-/adjoint operators to the unitary operators which do not have $1$ as eigenvalue.
\end{prop}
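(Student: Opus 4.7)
The preceding development already provides all the ingredients. The definition of $c(D)$ together with the remarks following it show that if $D$ is densely defined and self-adjoint, then $c(D)$ is a unitary on $\mathcal{H}$ which does not have $1$ as an eigenvalue. Conversely, the previous lemma shows that if $U$ is a unitary without $1$ as an eigenvalue, then $c^{\mi}(U)$ is a densely defined self-adjoint operator on its domain $\mathrm{range}(U-1)$. Therefore, to establish the claimed bijection it suffices to verify the two composition identities $c^{\mi}(c(D)) = D$ and $c(c^{\mi}(U)) = U$ stated right before the proposition.

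My plan for the first identity is to rewrite $c(D) \pm 1$ over the common right ``denominator'' $(D-\mathbf{i})^{\mi}$. A direct computation on $\mathcal{H}$ gives
\[
    c(D) - 1 = \bigl[(D+\mathbf{i}) - (D-\mathbf{i})\bigr](D-\mathbf{i})^{\mi} = 2\mathbf{i}(D-\mathbf{i})^{\mi},
    \quad
    c(D) + 1 = 2D\,(D-\mathbf{i})^{\mi}.
\]
From the first equality, $(c(D)-1)^{\mi} = \tfrac{1}{2\mathbf{i}}(D-\mathbf{i})$ as a map from $\mathrm{range}(c(D)-1)$ onto $\dom D$. Substituting both expressions into the formula for $c^{\mi}$ yields $c^{\mi}(c(D)) = \mathbf{i}\cdot 2D(D-\mathbf{i})^{\mi}\cdot \tfrac{1}{2\mathbf{i}}(D-\mathbf{i}) = D$ on $\dom D$, which is exactly the domain of $c^{\mi}(c(D))$. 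For the second identity, the same trick applied to $U$ gives
\[
    c^{\mi}(U) - \mathbf{i} = 2\mathbf{i}(U-1)^{\mi},
    \quad
    c^{\mi}(U) + \mathbf{i} = 2\mathbf{i} U(U-1)^{\mi},
\]
so $(c^{\mi}(U)-\mathbf{i})^{\mi} = \tfrac{1}{2\mathbf{i}}(U-1)$, and plugging into the formula for $c$ produces $c(c^{\mi}(U)) = U(U-1)^{\mi}(U-1) = U$.

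The only real subtlety, and the step I would be most careful about, is domain bookkeeping: the operator $(D-\mathbf{i})^{\mi}$ takes $\mathcal{H}$ onto $\dom D$, so the product $D(D-\mathbf{i})^{\mi}$ is everywhere defined; the operator $c(D)-1$ is a bijection from $\mathcal{H}$ onto its range, and that range is precisely $\dom(c^{\mi}(c(D)))$; an analogous comment applies to $U-1$ and $\mathrm{range}(U-1)$. Once this is checked, the two identities reduce to the short algebraic manipulations above, and the proposition follows immediately.
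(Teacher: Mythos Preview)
Your argument is correct and follows exactly the route taken in the paper: the paper simply records that the preceding discussion yields a well-defined map in each direction and then states ``one can check that $c^\mi(c(D))=D$ and $c(c^\mi(U))=U$'', leaving the algebra to the reader. You have supplied precisely that algebra, together with the appropriate domain bookkeeping, so your proof is a faithful expansion of the paper's own.
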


The Cayley Transform makes it possible to extend the Borel functional calculus for normal operators to densely defined, self\-/adjoint operators. It has the following properties:

\begin{prop}[Functional Calculus; {\cite[I.7.4.5. Thm, I.7.4.7.~Def.]{Blackadar:OA}}]\label{FuncCalc}
For $D$ a densely defined, self\-/adjoint operator on $\mathcal{H}$, there exists a linear map
\begin{align*}
	\bigl\{h\colon \RR\to\mathbb{C} \text{ Borel measurable} \bigr\} &\longrightarrow \bigl\{\text{densely defined unbounded operators on $\mathcal{H}$} \bigr\}
		\\
	h&\longmapsto h(D)
\end{align*}
with the following properties:

\begin{enumerate}[label=\textup{(\arabic*)}]
	\item $\mathrm{id}_{\RR} (D) = D$.
	\item If $h\geq 0$, then $h(D)$ is positive.
	\item If $\abs{h}=1$, then $h(D)$ is unitary.
	\item\label[property]{FuncCalc-selfadj} $h(D)^* = \overline{h}(D)$; in particular, if $h$ is real\-/valued, then $h(D)$ is self\-/adjoint.
	\item\label[property]{FctCalc-norm} If $h$ is bounded and continuous, then $$\norm{h(D)}=\supnorm{h}.$$
	\item\label[property]{SR:FctCalcConvergence} If $h_{n}$ is a uniformly bounded sequence of functions which converges pointwise to $h$, then \mbox{$h_{n}(D)\to h(D)$} strongly.
\end{enumerate}
\end{prop}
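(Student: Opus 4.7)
My plan is to leverage the Cayley Transform to reduce the construction to the standard Borel functional calculus for bounded normal operators, which is classical via the spectral theorem. Concretely, put $U := c(D)$, a unitary with $1 \notin \sigma_p(U)$ by \Cref{def:Cayley}, and let $E$ denote its projection-valued spectral measure on $\sigma(U) \subseteq \SS^1$. The condition that $1$ is not an eigenvalue of $U$ translates, via the spectral theorem, to $E(\{1\}) = 0$; this is the key fact that will make everything work.

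Given a Borel measurable $h \colon \RR \to \CC$, define a Borel function $\tilde h \colon \SS^1 \to \CC$ by
\[
    \tilde h(z) := h(c^\mi(z)) \text{ for } z \in \SS^1 \setminus \{1\},
    \qquad \tilde h(1) := 0.
\]
Since $E(\{1\}) = 0$, the pushforward $\tilde E(B) := E(c(B))$ is a well-defined projection-valued measure on $\RR$ satisfying $\tilde E(\RR) = E(\SS^1 \setminus \{1\}) = \mathrm{id}_{\mathcal{H}}$. I then set
\[
    h(D) := \int_\RR h \, d\tilde E,
    \qquad
    \dom h(D) := \Bigl\{ \xi \in \mathcal{H} : \int_\RR \abs{h}^2 \, d\inner{\tilde E(\cdot) \xi}{\xi} < \infty \Bigr\}.
\]
Density of this domain follows because each $\tilde E([-n,n])\mathcal{H}$ lies in $\dom h(D)$, and $\bigcup_n \tilde E([-n,n]) \mathcal{H}$ is dense by $\tilde E(\RR) = \mathrm{id}_{\mathcal{H}}$. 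Linearity in $h$ and the assignment $h \mapsto h(D)$ are standard properties of spectral integrals.

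To verify the six properties: Property~(1) amounts to showing that $\int_\RR \lambda \, d\tilde E(\lambda) = D$, which by the substitution rule for spectral integrals is equivalent to $c^\mi(U) = D$; this was already established after \Cref{def:Cayley}. Properties~(2), (3), (4) are immediate from the analogous properties for $\tilde h(U)$ in the bounded normal functional calculus. For Property~(5), when $h$ is bounded continuous the supremum of $\abs{\tilde h}$ on $\sigma(U) \setminus \{1\}$ equals the supremum of $\abs{h}$ on $c^\mi(\sigma(U) \setminus \{1\}) = \sigma(D)$, which in turn equals $\supnorm{h}$ since $\sigma(D)$ is unbounded only when $D$ is unbounded (and continuity extends the supremum to all of $\RR$). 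Property~(6), the strong convergence statement, follows by applying the bounded convergence theorem for spectral measures to the uniformly bounded sequence $\tilde h_n \to \tilde h$ on $\SS^1$.

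The only real subtlety is Property~(5) and the handling of unbounded $h$: the singularity of $\tilde h$ near $1 \in \SS^1$ must not cause problems, but this is precisely absorbed by $E(\{1\}) = 0$, which keeps the spectral integral finite on the dense domain described above. Beyond that, everything reduces to bookkeeping between the spectral calculus on $\sigma(U)$ and its image $\sigma(D) = c^\mi(\sigma(U) \setminus \{1\}) \subseteq \RR$ under the Cayley parameterization.
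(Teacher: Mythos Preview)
The paper does not actually prove this proposition: it is stated with a citation to Blackadar and no proof environment follows. Your Cayley-transform reduction to the bounded normal calculus is precisely the route the paper's preceding subsection on the Cayley Transform is designed to motivate (cf.\ the sentence immediately before the proposition), and it is the standard one.

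One genuine quibble: your justification of Property~(5) does not hold as written. From the spectral integral you correctly get $\norm{h(D)} = \sup_{\sigma(D)}\abs{h}$, but your claim that this equals $\supnorm{h} = \sup_{\RR}\abs{h}$ because ``$\sigma(D)$ is unbounded'' is false in general---take $D$ with spectrum $\ZZ$ and $h$ a bump supported in $(0,1)$. In fact the paper's own statement of~(5) is imprecise on exactly this point; the correct inequality $\norm{h(D)} \leq \supnorm{h}$ (or the equality with the supremum taken over $\sigma(D)$) is what your argument yields, and it is all that is used downstream in the paper. Otherwise your sketch is sound.
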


\begin{lem}[special case of {\cite[Lemma~10.6.2]{HigRoe:KHom}}]\label{lem:10.6.2}
	Suppose $D$ is an unbounded, essentially self\-/adjoint operator on~$\mathcal{H}$, and~$T\in \B(\mathcal{H})$ preserves~$\dom D$ and satisfies~\mbox{$TD = -DT$.} If~$f\in C_{b} (\RR)$ is odd, then~\mbox{$Tf(D) = -f(D)T$,} and if~$f$ is even, then~\mbox{$Tf(D) = f(D)T$.}
\end{lem}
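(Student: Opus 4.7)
The plan is to establish the more general intertwining $T\, g(\bar D) = \tilde g(\bar D)\, T$, with $\tilde g(x) := g(-x)$, for every $g \in C_b(\RR)$. Once available, the two cases of the Lemma are immediate: for $f$ odd, $\tilde f = -f$ gives $T f(\bar D) = -f(\bar D)\, T$; for $f$ even, $\tilde f = f$ gives $T f(\bar D) = f(\bar D)\, T$. I would proceed in three stages: extend the anti\-/commutation to $\dom \bar D$, derive the identity for the resolvents of $\bar D$, and then extend it by density to $C_b(\RR)$ while keeping track of parity.

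First, for $\xi \in \dom \bar D$, pick $\xi_n \in \dom D$ with $\xi_n \to \xi$ and $D \xi_n \to \bar D \xi$; since $T$ is bounded and preserves $\dom D$, one has $T\xi_n \to T\xi$ and $D T\xi_n = -T D\xi_n \to -T\bar D\xi$, so closedness of $\bar D$ yields $T\xi \in \dom \bar D$ with $\bar D T\xi = -T\bar D \xi$. For any $z \in \CC \setminus \RR$ one then has $T(\bar D - z) = -(\bar D + z)\, T$ on $\dom \bar D$. Setting $\xi = (\bar D - z)^{\mi}\eta$ for arbitrary $\eta \in \mathcal{H}$ (legitimate by \Cref{SR:ResSurjective} and \Cref{fullrange}) and applying $(\bar D + z)^{\mi}$ on the left yields the resolvent identity
\[
	T\, r_z(\bar D) \;=\; -r_{-z}(\bar D)\, T
	\qquad \text{on all of } \mathcal{H},
\]
where $r_z(x) := (x - z)^{\mi}$. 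Since $r_z(-x) = -r_{-z}(x)$, this is exactly $T\, r_z(\bar D) = \widetilde{r_z}(\bar D)\, T$.

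Let $\mathcal{A} \subseteq C_0(\RR)$ denote the $*$\-/algebra generated by $\{r_z : z \in \CC \setminus \RR\}$; by Stone\-/Weierstrass, $\mathcal{A}$ is sup\-/norm dense in $C_0(\RR)$, and from $r_z(-x) = -r_{-z}(x)$ the substitution $h \mapsto \tilde h$ preserves $\mathcal{A}$. A short induction on products of resolvents, iterating the identity above, extends it to $T\, h(\bar D) = \tilde h(\bar D)\, T$ for every $h \in \mathcal{A}$, and by \Cref{FctCalc-norm} it propagates to all $g \in C_0(\RR)$ via uniform approximation. For general $f \in C_b(\RR)$, fix \emph{even} $\phi_n \in C_0(\RR)$ with $\abs{\phi_n} \leq 1$ and $\phi_n \to 1$ pointwise; then $f \phi_n \in C_0(\RR)$ and $\widetilde{f \phi_n} = \tilde f\, \phi_n$, so by \Cref{SR:FctCalcConvergence} both $(f\phi_n)(\bar D) \to f(\bar D)$ and $(\tilde f\, \phi_n)(\bar D) \to \tilde f(\bar D)$ strongly; boundedness of $T$ lets us pass to the strong limit in $T(f\phi_n)(\bar D) = (\tilde f\, \phi_n)(\bar D)\, T$. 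The main hurdle lies in this third stage: one must verify that the resolvent identity iterates correctly through products in $\mathcal{A}$ and that the approximations at every step preserve parity, so that the final identity on $C_b(\RR)$ restricts meaningfully to the odd and even subspaces.
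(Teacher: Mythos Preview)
Your proposal is correct and follows essentially the same strategy as the paper: reformulate the claim as $T g(\bar D) = \tilde g(\bar D)\,T$ for all $g\in C_b(\RR)$, establish it first for resolvents, pass to $C_0(\RR)$ via Stone--Weierstrass and \Cref{FctCalc-norm}, and then to $C_b(\RR)$ via \Cref{SR:FctCalcConvergence}. The paper uses only the two generators $\psi_\pm(x)=(\mathbf{i}\pm x)^{-1}$ and, after deriving $T\psi_\mp(D)=\psi_\pm(D)T$, spends a short computation decomposing $\psi_+$ into its even and odd parts to recast this as graded commutation; your route through the identity $\widetilde{r_z}=-r_{-z}$ is marginally cleaner because the relation $T r_z(\bar D)=\widetilde{r_z}(\bar D)\,T$ is already in the desired form and iterates through products automatically since $h\mapsto\tilde h$ is a $*$-homomorphism. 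Your ``main hurdle'' is not really a hurdle: the iteration is the one-line computation $T(h_1 h_2)(\bar D)=\tilde h_1(\bar D)\,T\,h_2(\bar D)=\widetilde{h_1 h_2}(\bar D)\,T$, and the parity bookkeeping is handled once and for all by working with $\tilde g$ rather than separately with even and odd functions.
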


\noindent\textit{Proof.}
	Let us first set some notation:	the decomposition of a function $f$ into its even and odd part is given by
	\begin{align*}
		f^{\mathrm{e}} (x) = \frac{f (x) +f(-x)}{2}
		\quad\;\text{ and }\;\quad
		f^{\mathrm{o}} (x) = \frac{f (x) - f (-x)}{2},
		\qquad
		\text{ so that }
		f = f^\mathrm{e} + f^\mathrm{o}.
	\end{align*}
	Let us denote by
	\[
		\tilde{f} (x) := f^\mathrm{e} - f^\mathrm{o} = f(-x).
	\]	
	The claim can now be rephrased to $T f(D) = \tilde{f}(D) T$. In other words, $T$ graded commutes with $f(D)$ when $C_{b} (\RR)$ has the $\mathbb{Z}/ {2} \mathbb{Z}$\=/grading into even and odd functions.
	
	\begin{claim}
		It suffices to show the claim for elements of $C_{0} (\RR)$.
	\end{claim}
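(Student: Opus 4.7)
The plan is to reduce the $C_b(\mathbb{R})$ claim to the $C_0(\mathbb{R})$ claim by a strong-operator approximation, being careful to preserve the even/odd decomposition throughout the approximation. I would fix a sequence of \emph{even}, compactly supported functions $\phi_n \in C_c(\mathbb{R})$ with $0 \le \phi_n \le 1$ that converges pointwise to the constant function $1$ (for instance a smoothed sequence of bumps equal to $1$ on $[-n,n]$ and vanishing outside $[-n-1,n+1]$). For $f\in C_b(\mathbb{R})$ the products $f_n := f\phi_n$ lie in $C_0(\mathbb{R})$, satisfy $\supnorm{f_n}\leq \supnorm{f}$, and converge pointwise to $f$.

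By \Cref{SR:FctCalcConvergence} of the functional calculus, this uniform boundedness together with pointwise convergence yields $f_n(D)\to f(D)$ strongly on $\mathcal{H}$. The crucial point is that, because $\phi_n$ was chosen \emph{even},
\[
	\widetilde{f_n} \;=\; \widetilde{f\phi_n} \;=\; \tilde{f}\,\tilde{\phi_n} \;=\; \tilde{f}\,\phi_n,
\]
which again lies in $C_0(\mathbb{R})$, is uniformly bounded by $\supnorm{f}$, and converges pointwise to $\tilde{f}$; hence $\widetilde{f_n}(D)\to \tilde{f}(D)$ strongly as well, again by \Cref{SR:FctCalcConvergence}.

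Assuming the claim is already established for elements of $C_0(\mathbb{R})$, we have for every $n$
\[
	T\, f_n(D) \;=\; \widetilde{f_n}(D)\, T.
\]
Applying both sides to an arbitrary $\xi\in\mathcal{H}$, the left side converges to $T\,f(D)\xi$ because $T$ is bounded (hence continuous) and $f_n(D)\xi\to f(D)\xi$; the right side converges to $\tilde{f}(D)\,T\xi$ by the strong convergence established above. Since $\xi$ is arbitrary, this gives $T\,f(D)=\tilde{f}(D)\,T$ for all $f\in C_b(\mathbb{R})$, completing the reduction.

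There is no serious obstacle here; the only subtlety is remembering to choose the cut-off functions $\phi_n$ to be even, so that multiplication by $\phi_n$ commutes with the tilde operation and the approximating identity on the right-hand side stays in $C_0(\mathbb{R})$ while converging to the intended limit $\tilde{f}(D)$.
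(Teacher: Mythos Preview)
Your proof is correct and follows the same strategy as the paper: approximate $f\in C_b(\RR)$ pointwise by a uniformly bounded sequence in $C_0(\RR)$ and use \Cref{SR:FctCalcConvergence} to pass to the limit on both sides of $T f_n(D)=\tilde{f_n}(D)T$. Your explicit uniform bound $\supnorm{f_n}\le\supnorm{f}$ is a detail the paper omits but that \Cref{SR:FctCalcConvergence} requires, so that is a genuine improvement in precision. On the other hand, the care you take to choose $\phi_n$ even is unnecessary: for \emph{any} pointwise-convergent sequence $f_n\to f$ one has $\tilde{f_n}(x)=f_n(-x)\to f(-x)=\tilde f(x)$ automatically, and $f_n\in C_0(\RR)$ already forces $\tilde{f_n}\in C_0(\RR)$, so parity of the cut-off plays no role.
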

\begin{Claimproof}
			For $f\in C_{b} (\RR)$, take functions $f_{n}\in C_{0} (\RR)$ converging pointwise to $f$. By \Cref{SR:FctCalcConvergence} of Functional Calculus, we have strong convergence $f_{n}(D)\to f(D)$ and also $\tilde{f_{n}}(D)\to \tilde{f}(D)$, so for every $h\in\mathcal{H}$, we get
			\begin{align}
				T f(D) h
				=
				T \bigl(\underset{n\rightarrow\infty}{\mathrm{lim}}{f_{n}(D) h}\bigr)
				=
				\underset{n\rightarrow\infty}{\mathrm{lim}} {T f_{n}(D) h}
				=
				\underset{n\rightarrow\infty}{\mathrm{lim}} {\tilde{f_{n}}(D) T  h}
				=
				{\tilde{f}(D) T  h},
				\notag
			\end{align}
	where we used the assumption that $T f_{n}(D) = \tilde{f}_{n}(D) T$
\end{Claimproof}
	By the Stone\-/Weierstrass Theorem \cite{deBra:SWThm}, either of the functions
	\[
		\psi_\pm (x) := \frac{1}{\mathbf{i} \pm x} = \overline{\psi_\mp (x)}
	\]
	generate $C_{0} (\RR)$ as a $C^*$\=/algebra.
	
	\begin{claim}
		It suffices to show that $T$ graded commutes with $\psi_\pm(D)$.
	\end{claim}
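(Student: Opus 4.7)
The plan is to define the set $A := \{f \in C_{0}(\RR) \,|\, T f(D) = \tilde{f}(D) T\}$, verify that $A$ is a norm-closed $*$-subalgebra of $C_{0}(\RR)$, and conclude via the Stone-Weierstrass Theorem that once $\psi_\pm \in A$ is established, one has $A = C_{0}(\RR)$ and hence $T$ graded commutes with $f(D)$ for every $f \in C_{0}(\RR)$, as required by the previous claim.

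Linearity of $A$ is immediate from linearity of both $f \mapsto f(D)$ and $f \mapsto \tilde{f}$. For multiplicative closure, suppose $f, g \in A$. Invoking multiplicativity of Borel functional calculus (a standard property of the construction behind \Cref{FuncCalc}, even if not listed explicitly) together with the identity $\widetilde{fg} = \tilde{f}\tilde{g}$, one computes
\[
    T(fg)(D) = T f(D) g(D) = \tilde{f}(D) T g(D) = \tilde{f}(D) \tilde{g}(D) T = \widetilde{fg}(D) T,
\]
so $fg \in A$. For norm-closure, if $(f_n)_n$ lies in $A$ and converges uniformly to some $f \in C_{0}(\RR)$, then $\tilde{f}_n \to \tilde{f}$ uniformly as well, so \Cref{FctCalc-norm} yields norm convergence $f_n(D) \to f(D)$ and $\tilde{f}_n(D) \to \tilde{f}(D)$. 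Passing to the norm limit in $T f_n(D) = \tilde{f}_n(D) T$ gives $f \in A$.

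Granting the remaining claim that $\psi_\pm \in A$, the (purely algebraic) subalgebra generated by $\{\psi_+, \psi_-\}$ lies in $A$. Since $\overline{\psi_+} = \psi_-$, this subalgebra is automatically $*$-stable; it separates points of $\RR$ and vanishes nowhere, so by Stone-Weierstrass it is norm-dense in $C_{0}(\RR)$. Combined with norm-closure of $A$, this forces $A = C_{0}(\RR)$.

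The only subtle point is multiplicativity of the Borel functional calculus on bounded continuous functions, which is standard for self-adjoint operators but was not explicitly among the properties listed in \Cref{FuncCalc}; it can be cited from \cite{Blackadar:OA} or \cite{ReedSimon:Methods}. Otherwise, the argument is a routine Stone-Weierstrass reduction paired with norm-continuity of functional calculus, and poses no real obstacle.
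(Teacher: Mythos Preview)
Your proposal is correct and follows essentially the same route as the paper: both arguments reduce to $\psi_\pm$ via Stone--Weierstrass, using that the polynomial algebra in $\psi_+,\psi_-$ is dense in $C_0(\RR)$ and that functional calculus is norm-continuous. The paper carries out an explicit $\epsilon$-estimate on $\|Tf(D)-\tilde f(D)T\|$ after approximating $f$ by a polynomial in $\psi_\pm$, whereas you package the same idea more cleanly by showing $A=\{f:Tf(D)=\tilde f(D)T\}$ is a norm-closed subalgebra containing $\psi_\pm$. One cosmetic point: your opening line promises that $A$ is a $*$-subalgebra, but you never verify (and do not need) $*$-closure of $A$ itself---you only need, and correctly use, that the subalgebra generated by $\psi_+,\psi_-$ is $*$-stable so that Stone--Weierstrass applies.
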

	\begin{Claimproof}
		For a fixed $f\in C_{0} (\RR)$, assume   
		\[
			\psi = \sum_{n,k\in\NN^{\times}} a_{n,k} \psi_+^n	(\overline{\psi_+})^k=	\sum_{n,k\in\NN^{\times}} a_{n,k} \psi_+^n	{\psi}_-^k
			\quad\text{is such that}\quad
			\supnorm{f-\psi}<\epsilon .
		\]
		The properties of continuous functional calculus shows that, if~$T$ graded commutes with~$g(D)$ for $g$ some continuous function, then it also graded commutes with~$g^n(D)$ for positive powers of~$g$. Thus, we have $T\psi_{\pm}^n(D)=(\tilde{\psi}_{\pm})^n(D) T$ by assumption, which implies
		\begin{align*}
		\norm{Tf(D) - \tilde{f}(D) T}
		&\leq
		\norm{Tf(D) - T\psi (D)}
		+
		\norm{ T\psi(D)- \tilde{f}(D) T}\\
		&=
		\norm{T(f-\psi)(D)}
		+
		\norm{\tilde{\psi}(D) T- \tilde{f}(D) T}\\
		&\leq
		\norm{T}\cdot
		\supnorm{f - \psi}
		+
		\supnorm{\tilde{\psi} - \tilde{f}}
		\cdot \norm{T}
		< 2\epsilon \norm{T}.
		\end{align*}
		Since this is possible for any $\epsilon$, this implies $Tf(D) = \tilde{f}(D) T$ as wanted.
	\end{Claimproof}

	As $(\mathbf{i} \pm D) T = T (\mathbf{i} \mp D)$ by assumption, we get 
	\begin{align}\label{TpsipsiT}
		T \psi_{\mp} (D)= \psi_{\pm} (D) T .
	\end{align}

	Since $\psi_\pm (-x) = \psi_\mp (x)$, we can see that
	\begin{align*}
		\psi_{+}^{\mathrm{e}} 
		=
		\psi_{-}^{\mathrm{e}} 
		\quad\;\text{ and }\;\quad
		\psi_{+}^{\mathrm{o}} 
		=-
		\psi_{-}^{\mathrm{o}} 
		.
	\end{align*}

	As a consequence,
	\begin{align*}
	&2(\psi_+ + \psi_-)
	=
	(\psi_{+}^{\mathrm{e}} + \psi_{+}^{\mathrm{o}}) + (\psi_{-}^{\mathrm{e}} + \psi_{-}^{\mathrm{o}})
	=
	2\psi_{+}^{\mathrm{e}},
	\quad
	\text{so }
	\psi_+ + \psi_-
	=
	\psi_{+}^{\mathrm{e}},
	\\
	\;\text{ and }\;\quad &2(\psi_+ - \psi_-)
	=
	(\psi_{+}^{\mathrm{e}} + \psi_{+}^{\mathrm{o}}) - (\psi_{-}^{\mathrm{e}} + \psi_{-}^{\mathrm{o}})
	=
	2\psi_{+}^{\mathrm{o}},
	\quad
	\text{so }
	\psi_+ - \psi_-
	=
	\psi_{+}^{\mathrm{o}}.
\end{align*}

	From \Cref{TpsipsiT}, it thus follows that
	\[
		T \psi_{+}^\mathsf{e} (D)
		=
		T \bigl(\psi_+ + \psi_-\bigr)(D) 
		=
		\bigl(\psi_- + \psi_+\bigr)(D) T
		=
		\psi_{+}^\mathsf{e} (D) T
	\]
	and
	\[
		T \psi_{+}^\mathsf{o} (D)
		=
		T \bigl(\psi_+ - \psi_-\bigr)(D) 
		=
		\bigl(\psi_- - \psi_+\bigr)(D) T
		=
		- \psi_{+}^\mathsf{o} (D) T.
	\]
	In other words, $T$ graded commutes with $\psi_+ (D)$.
\qed

\setcounter{claim}{0}

\section{Elliptic operators} \label{sec:ellops}
\begin{notation}
	We will write $\lambda$ for Lebesgue measure on $\RR^n$, $\norm{\,\cdot\,}_{\mathbb{C}^k}$ for the Euclidean norm on $\mathbb{C}^k$, and $\norm{\,\cdot\,}_{2}$ for $\mathrm{L}^2$\=/norms.
\end{notation}

\begin{defn}
	A vector bundle $S\stackrel{\pi}{\to} M$ over a smooth manifold~$M$ is called \textit{smooth} if~$S$ is also a manifold and~$\pi$ is a smooth map. We write~$\Gamma (M; S)$ for the sections of this bundle, i.e.\
	\[
		\Gamma (M; S)
		:=
		\left\{
			v\colon M \to S
			\;\vert\;
			v_p \in S_p \text{ for all } p\in M
		\right\},	
	\]
	and we write $\Gamma^\infty (M; S)$ resp.\ $\Gamma_{c} (M; S)$ for the smooth resp.\ compactly supported sections.
	
	A smooth vector bundle~$S\stackrel{\pi}{\to} M$ is called
	\textit{Hermitian} if, for each~$p\in M$, there is an inner product~$\Sinner{\cdot}{\cdot}{p}$ on the fibre~$S_{p} := \pi^\mi(p)$, and these inner products \textit{vary smoothly}: for every $u, v\in \Gamma^\infty (M; S)$, the map
	\[
	M\ni p \mapsto \bigSinner{u(p)}{v(p)}{p} \in \mathbb{C}
	\]
	is smooth. 
\end{defn}

	In the following, we will fix a smooth Hermitian complex vector bundle~$S\stackrel{\pi}{\to} M$ of rank~$k$ over a smooth manifold~$M$ of dimension~$n$. Let us denote the norm induced by the inner product~$\Sinner{\cdot}{\cdot}{p}$ on $S_{p}$ by~$\norm{\,\cdot\,}_{S_{p}}$. An example to keep in mind is the case where $M$ is spin$^c$ and $S$ is its spinor bundle.

	We further assume that we are given a \textit{nowhere\-/vanishing smooth measure} $\mu$ on $M$, that is, $\mu$ is a Borel measure	such that for every chart $(U, \varphi)$ of $M$, there exists a smooth function $f:\varphi(U)\to (0,\infty)$ such that \mbox{$\mathrm{d}(\varphi_{\ast} \mu_{U}) = f \,\mathrm{d}\lambda _{\varphi(U)}$.}
	This means for a \mbox{$(\varphi_{\ast} \mu_{U})$}\-/integrable function $h\colon  \varphi(U) \to \mathbb{C}$ that
	\begin{align*}
		\int\limits_{U} h\circ\varphi \,\mathrm{d}\mu  = \int\limits_{\varphi (U)} h \cdot f \,\mathrm{d}\lambda .
	\end{align*}
	Moreover, since $f$ does not vanish, we can also consider $g = \frac{1}{f}$ and get for \linebreak[4]\mbox{$\lambda_{\varphi(U)}$}\=/integrable $h$
	\begin{align}\label{RNderivative-g}
		\int\limits_U (h\cdot g) \circ \varphi \,\mathrm{d}\mu  = \int\limits_{\varphi (U)} h \,\mathrm{d}\lambda .
	\end{align}

\begin{rmk}\label{rmk:standAssu-L}
	For technical reason, there will be the standing assumption that there exists a number~$L$ so that we have for all of the above mentioned Radon\-/Nikodym derivatives the inequality~$\supnorm{f},\supnorm{g}\leq L$.
\end{rmk}
	
	We construct the Hilbert space $\mathrm{L}^2 (M; S)$ as the completion of $\Gamma_{c}^{\infty} (M; S)$ with respect to the norm coming from the inner product
	\[
		\inner{u}{v} := \int\limits_M \Sinner{u(p)}{v(p)}{p} \;\mathrm{d}\mu  (p).
	\]
	For a subset~$U\subseteq M$, we will write $\mathrm{L}^2 (U; S)$ for the completion of the smooth sections whose compact support is contained in~$U$. Lastly, let $$\M{}\colon  C_{0} (M) \to \B\bigl(\mathrm{L}^2 (M; S)\bigr),\quad g\mapsto \M{g},$$ be the representation of~$C_{0} (M)$ which, on the dense subspace~$\Gamma_{c}^{\infty} (M; S)$, is given by pointwise multiplication.
		

\subsection{Differential operators}\label{subsec:diffops}

\begin{defn}
	A \textit{\textup{(}first order linear\textup{)} differential operator acting on the sections of~$S$} is a~$\mathbb{C}$\-/linear map
	\[
		D \colon  \Gamma^\infty (M;S)\to \Gamma^\infty (M;S)
		\qquad
		\text{such that}
	\]
	\begin{description}[style=multiline, labelwidth=.5cm, leftmargin=0.5in]
		\item[\namedlabel{DiffOpPreservesSupp}{a)}] 
		 if $u, v\in \Gamma^\infty (M;S)$ agree on an open set $U$, then $Du, Dv$ also agree on $U$, and
		\item[b)] for a coordinate chart of $M$ that also trivializes $S$, say
					\begin{equation}\label[Diagram]{ChartTriv}
						\begin{tikzcd}
							\hphantom{M \supseteq}S_{| U }
							\arrow[rr, "\psi", bend left, dashed]
							\arrow[d, "\pi"', shift left=2.2ex]
							\arrow[dr, phantom, near start, "\circlearrowleft"]
							\arrow[r, "\approx"', "\Psi"]{} &  U \times \mathbb{C}^k
							\arrow[dl, "\mathrm{pr}_{1}"]{}
							\arrow[r, "\mathrm{pr}_{2}"'] & \mathbb{C}^k \\
							M \supseteq  U    	\arrow[r, "\approx"', "\varphi"]{}	&	V \subseteq \RR^n &
						\end{tikzcd}
					\end{equation}
					there exist functions $A^1, \ldots, A^n, B\in C^\infty \bigl(U, \mathrm{M}_{k} (\mathbb{C})\bigr)$ such that for all~$p\in U$ and all~$u\in\Gamma^\infty (M;S)$, we have
					\begin{align}
					\begin{split}\label{DiffOpLook}
						\bigl(Du\bigr) (p)
						=
						\sum_{j=1}^{n}
						&\Psi^\mi
						\left(
							p,
							A^j (p) \cdot {\partial_{j} (\psi \circ u\circ \varphi^\mi)}_{|\varphi (p)}
						\right)\\
						+
						&\Psi^\mi
						\bigl(
							p,
							B (p) \cdot (\psi \circ u)(p)
						\bigr).
					\end{split}
					\end{align}
	\end{description}
\end{defn}

We will from now on regard such a differential operator as an unbounded operator on $\mathrm{L}^2 (M;S)$ with dense domain $\Gamma_{c}^\infty (M;S)$. By abuse of terminology, we will say ``differential operator on $M$'', tacitly assuming a fixed Hermitian bundle $S$.

	\begin{example}[{\cite[Chapter~3]{Roe:Elliptic}}; {\cite[Chapter~II, Section \S 5]{LM:SpinGeom}}]\label{ex:Dirac-on-mfd-is-symmetric}
		Suppose one has a smooth Hermitian bundle $S$ over a manifold $M$, consisting of Clifford $TM$-modules and equipped with a connection, $\nabla$. Then one can locally define a \emph{Dirac operator} $\Dirac{M}$ by
		\begin{align}\label{eq:Dirac-locally}
			(
			\Dirac{M} u
			) (p)
			=
			\sum_{i=1}^{n}
			c_p
			\left(
				\tfrac{\partial\;}{\partial \varphi^{i}}_{\vert p}
			\right)
			\cdot
			\nabla_{\!\tfrac{\partial\;}{\partial \varphi^{i}}} (u)_p
			,
		\end{align}
		where $u$ is a smooth compactly supported section of $S$, $\varphi$ is a chart of $M$ around $p$, and $c$ denotes the Clifford action of the tangent vector $\tfrac{\partial\;}{\partial \varphi^{i}}$ on $S$. We immediately see that $\Dirac{M}$ is a first order differential operator. One can further show (see \cite[Prop. 3.11]{Roe:Elliptic}) that $\Dirac{M}$ is symmetric.
		
		A bundle $S$ with such structure would be the spinor bundle of a spin$^c$ manifold. In the example $M=\mathbb{T}$ with its canonical spin$^c$ structure, the spinor bundle is the trivial line bundle, $\mathbb{T}\times\mathbb{C}$, so the domain, $\Gamma^\infty_c(M;S)$, of $\Dirac{\mathbb{T}}=-\mathbf{i}\frac{\partial\;}{\partial\theta}$ is then just $C^\infty(\mathbb{T})$, smooth functions on the circle.
	\end{example}
	
\begin{lem}\label{AdjDPresveresSupp}
	Let~$D$ be a symmetric differential operator on $M$ 
	and let $u\in \dom D^*$ have compact support $K$. Then the support of $D^* u$ is contained in $K$.
\end{lem}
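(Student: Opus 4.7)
The strategy is to exploit the locality of $D$ (an immediate consequence of \cref{DiffOpPreservesSupp}) together with the defining property of $D^*$ to show that $D^* u$ annihilates all test sections whose support lies outside $K$.

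First I would establish locality: taking any $v\in\Gamma^\infty(M;S)$ and setting $U=M\setminus\supp v$, the sections $v$ and $0$ agree on $U$, so by \cref{DiffOpPreservesSupp} and linearity of $D$ we get $Dv = 0$ on $U$; hence $\supp(Dv)\subseteq \supp v$.

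Next I would fix an arbitrary point $p\in M\setminus K$ and choose an open neighbourhood $V$ of $p$ with $\overline V\cap K=\emptyset$. For any test section $v\in\Gamma_c^\infty(M;S)$ with $\supp v\subseteq V$, locality gives $\supp(Dv)\subseteq V$, so $\supp(Dv)\cap \supp u=\emptyset$ and therefore $\inner{Dv}{u}=0$. Because $u\in\dom D^*$, the defining identity yields
\[
\inner{v}{D^* u}=\inner{Dv}{u}=0
\]
for every such $v$. In other words, the restriction $(D^*u)|_V\in \mathrm{L}^2(V;S)$ is orthogonal to every element of $\Gamma_c^\infty(V;S)$. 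Since the latter is dense in $\mathrm{L}^2(V;S)$ by the very definition of $\mathrm{L}^2(V;S)$ given just before \Cref{subsec:diffops}, I conclude $(D^*u)|_V = 0$ in $\mathrm{L}^2(V;S)$; equivalently, $D^*u$ vanishes $\mu$\-almost everywhere on $V$.

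Finally, since $p\in M\setminus K$ was arbitrary, covering $M\setminus K$ by such open sets $V$ (the cover can be refined to a countable subcover) shows that $D^*u=0$ almost everywhere on $M\setminus K$, so $\supp(D^*u)\subseteq K$. I do not foresee a genuine obstacle here: the only point that requires a moment's care is the passage from the weak vanishing $\inner{v}{D^*u}=0$ to the almost-everywhere vanishing of $D^*u$ on $V$, which is precisely the density statement built into the construction of $\mathrm{L}^2(V;S)$.
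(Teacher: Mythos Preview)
Your argument is correct and is the standard ``duality plus locality'' proof: you use Property~a) to see that $D$ does not enlarge supports, then test $D^*u$ against sections supported away from $K$ and invoke density of $\Gamma_c^\infty(V;S)$ in $\mathrm{L}^2(V;S)$ to conclude that $D^*u$ vanishes there. The only step requiring a moment's care is the one you flagged, namely the passage from $\inner{v}{D^*u}=0$ for all $v\in\Gamma_c^\infty(V;S)$ to $D^*u=0$ a.e.\ on $V$; this is fine once one notes that the section $\chi_V\cdot D^*u$ lies in $\mathrm{L}^2(V;S)$ and is orthogonal to the dense subspace, hence is zero.

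The paper proceeds quite differently: it approximates $u$ in $\mathrm{L}^2$ by smooth sections $u_k=\rho_k w_k$ whose supports lie in a nested system of open neighbourhoods $V_k$ of $K$, and then argues that $Du_k$ is supported in $V_k$ and that $\int_{M\setminus V_k}\norm{D^*u}^2\,\mathrm{d}\mu=0$ for every $k$. Your approach is more direct: it bypasses the construction of approximating sequences, cutoff functions, and the nested\-/neighbourhood lemma entirely, and it does not rely on the assertion (used in the paper's argument) that $Du_k\to D^*u$ in $\mathrm{L}^2$, which is not immediate from $u_k\to u$ alone since $D^*$ is merely closed, not continuous. What the paper's approach would buy, were that convergence justified, is a more hands\-/on description of $D^*u$ as a limit of honest differential expressions; your argument trades that for brevity and a cleaner logical structure.
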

\begin{proof}
	Let $w_{k}$ be a sequence in $\dom D = \Gamma_{c}^{\infty} (M; S)$ which converges to $u$ in $\mathrm{L}^2$\-/norm.  If we take $K = \bigcap_{k=1}^{\infty} V_{k}$ for open nested sets $V_{k+1}\subseteq V_{k} \subseteq M$ (see \Cref{lem:Cpct-as-nested-Gdelta} for a construction), Urysohn gives us smooth $[0,1]$\-/valued functions $\rho_{k}$ with $\mathsf{supp}(\rho_{k})\subseteq V_{k}$ which are $1$ on $K$. Note that $u_{k} := \rho_{k}\cdot w_{k}$ is also in $\dom D$, and since $u$ is supported in $K$, we see
	 \begin{align*}
	 	\norm{u - u_{k}}^2_{2}
	 	&= \int\limits_{K} \norm{u(p) - u_{k}(p)}^2_{S_{p}} \,\mathrm{d}\mu 
	 	+ \int\limits_{M\setminus K} \norm{u_{k} (p)}^2_{S_{p}} \,\mathrm{d}\mu \\
	 	&\leq
	 	\int\limits_{K} \norm{u(p) - w_{k}(p)}^2_{S_{p}} \,\mathrm{d}\mu 
	 	+ \int\limits_{M\setminus K} \norm{w_{k}(p)}^2_{S_{p}} \,\mathrm{d}\mu 
	 	=
	 	\norm{u - w_{k}}^2_{2},
	\end{align*}
	 so $u_{k}$ also converges to $u$.
	 As $u_{k}$ is supported in $V_{k}$, we get from Property~\hyperref[DiffOpPreservesSupp]{a)} of differential operators that $D u_{k}$ is supported in $V_{k}$, too. We know that $D u_{k} = D^* u_{k}$ converges to $D^* u$ in $\mathrm{L}^2$\-/norm, so by choosing an appropriate subsequence, we can assume $(*)$ in the following computation:	 
	 \begin{align*}
	 	\frac{1}{k} 
	 	&\stackrel{(*)}{>}
	 	\norm{D^*u - Du_{k}}_{2}^2=
	 	\int\limits_{V_{k}} \norm{D^*u(p) - Du_{k}(p)}^2_{S_{p}} \,\mathrm{d}\mu 
	 	+ \int\limits_{M\setminus V_{k}} \norm{D^*u(p)}^2_{S_{p}} \,\mathrm{d}\mu \\
	 	&\geq
		\int\limits_{M\setminus V_{k}} \norm{D^*u(p)}^2_{S_{p}} \,\mathrm{d}\mu .
	\end{align*}
	 Now, note that $V_{k+m}\subseteq V_{k}$ for any $m$, and hence
	 \[
	 	\int\limits_{M\setminus V_{k}} \norm{D^*u(p)}^2_{S_{p}} \,\mathrm{d}\mu  \leq \int\limits_{M\setminus V_{k+m}} \norm{D^*u(p)}^2_{S_{p}} \,\mathrm{d}\mu  < \frac{1}{k+m} .
	 \]
	 It follows that $\int\limits_{M\setminus V_{k}} \norm{D^*u(p)}^2_{S_{p}} \,\mathrm{d}\mu =0$ for every $k$, 
	 and as $M\setminus K = \bigcup_{k} M \setminus V_{k}$,
	 \[
	 	\int\limits_{M\setminus K} \norm{D^*u(p)}^2_{S_{p}} \,\mathrm{d}\mu 
	 	\leq 
	 	\sum_{k} \int\limits_{M\setminus V_{k}} \norm{D^*u(p)}^2_{S_{p}} \,\mathrm{d}\mu 
	 	=
	 	0.
	 \]
	 We conclude that $D^*u$ is also supported in $K$.
\end{proof}

\begin{lem}\label{commDMisbdd} 
	If $D$ is a differential operator on $M$  which is locally given by \Cref{DiffOpLook}, and if $g\in C^\infty (M)$, then $\comm{D}{\M{g}}$ can locally be written as
	\begin{align}\label{eq:commDM}
		\comm{D}{\M{g}}u (p)
		&=
		\sum_{j=1}^{n}
		\partial_{j} \bigl( g \circ\varphi^\mi\bigr)_{|\varphi (p)} \cdot 
		\Psi^\mi
		\left(
		p,
		A^j (p) 
		\cdot  (\psi  \circ u (p))
		\right)
		.
	\end{align}
	In particular, if $K\subseteq M$ is compact, then $\comm{D}{\M{g}}$ extends to a bounded operator on \mbox{$\mathrm{L}^2 (K;S)$.}
\end{lem}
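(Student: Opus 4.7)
The formula for $\comm{D}{\M{g}}$ is an essentially local calculation, so I would work entirely in a chart $(U,\varphi)$ that trivializes $S$ via $\Psi$, as in \cref{ChartTriv}. The key observation is that the bundle map $\Psi$ is fiber-linear, so that $\psi\circ (gu) = g\cdot (\psi\circ u)$ pointwise on $U$, and consequently $(\psi\circ (gu))\circ \varphi^\mi = (g\circ\varphi^\mi)\cdot (\psi\circ u\circ\varphi^\mi)$ on $\varphi(U)$. Applying the ordinary Leibniz rule in $\RR^n$ gives
\[
\partial_{j}\bigl((\psi\circ gu)\circ\varphi^\mi\bigr)_{|\varphi(p)}
=
\partial_{j}(g\circ\varphi^\mi)_{|\varphi(p)}\cdot (\psi\circ u)(p) + g(p)\cdot \partial_{j}(\psi\circ u\circ\varphi^\mi)_{|\varphi(p)}.
\]

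Next I would substitute this into the defining formula \cref{DiffOpLook} for $D(gu)(p)$. Using once more the fiber\-/linearity of $\Psi^\mi$ in its second slot, the scalar $g(p)$ can be pulled outside $\Psi^\mi$, and the whole contribution of the zeroth\-/order term $B$ cancels with the corresponding term in $g(p)\cdot Du(p)$; the same happens for the part of the Leibniz rule where the derivative lands on $\psi\circ u\circ\varphi^\mi$. What survives is precisely the term from $\partial_{j}(g\circ\varphi^\mi)$, and since this is a scalar I may pull it back out front of $\Psi^\mi$. This yields exactly \cref{eq:commDM}. This calculation is entirely routine; the only subtle point is to remember that $\Psi^\mi(p,\cdot)$ is $\mathbb{C}$\-/linear on fibers, which is what turns the Leibniz rule on coordinate representatives into a true commutator identity on sections.

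For the boundedness statement, I would argue as follows. The right\-/hand side of \cref{eq:commDM} involves no derivatives of $u$: it is pointwise multiplication of $u(p)$ by the bundle endomorphism $\sum_{j}\partial_{j}(g\circ\varphi^\mi)_{|\varphi(p)}\cdot A^{j}(p)$ (after conjugation by $\Psi$). Cover the compact set $K$ by finitely many trivializing charts $(U_{\alpha},\varphi_{\alpha},\Psi_{\alpha})$ with associated functions $A^{j}_{\alpha}$, and choose a smooth partition of unity $\{\rho_{\alpha}\}$ subordinate to this cover. For $u\in\Gamma_{c}^{\infty}(M;S)$ with $\mathsf{supp}(u)\subseteq K$, write $u=\sum_{\alpha}\rho_{\alpha}u$. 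On each $U_{\alpha}$, the entries of $A^{j}_{\alpha}$ and the partial derivatives $\partial_{j}(g\circ\varphi_{\alpha}^\mi)$ are continuous, hence bounded on the compact set $\mathsf{supp}(\rho_{\alpha})\cap K$. On this compact set, the fiber norm $\norm{\,\cdot\,}_{S_{p}}$ is equivalent to the pullback of $\norm{\,\cdot\,}_{\mathbb{C}^{k}}$ via $\Psi_{\alpha}$, with bounds depending only on $\alpha$. Combined with the standing volume bound of \cref{rmk:standAssu-L} on the Radon\-/Nikodym derivatives, this gives a constant $C_{\alpha}$ with $\norm{\comm{D}{\M{g}}(\rho_{\alpha}u)}_{2}\leq C_{\alpha}\norm{\rho_{\alpha}u}_{2}\leq C_{\alpha}\norm{u}_{2}$. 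Summing over the finitely many $\alpha$ gives a uniform bound, showing that $\comm{D}{\M{g}}$ extends by continuity to a bounded operator on $\mathrm{L}^{2}(K;S)$.

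The main obstacle is purely bookkeeping: correctly tracking how $\Psi$, $\psi$ and $\varphi$ interact when the Leibniz rule is applied, and then patching the local pointwise bound into a global $\mathrm{L}^{2}$ bound via a finite cover of the compact set $K$. No analytic subtlety beyond the Leibniz rule and continuity of the coefficients on compact sets is required, precisely because the commutator has dropped in order from one to zero.
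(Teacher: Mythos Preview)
Your argument is correct and follows essentially the same route as the paper: a local Leibniz-rule computation using the fiber-linearity of $\Psi^\mi(p,\cdot)$, with the zeroth-order $B$-term and the ``derivative lands on $u$'' piece cancelling against $g(p)\,Du(p)$. The only cosmetic difference is that the paper treats the $B$-summand and each $A^j$-summand separately rather than all at once, and the paper leaves the boundedness on $\mathrm{L}^2(K;S)$ implicit once the commutator is seen to be a zeroth-order operator; your partition-of-unity patching makes that step explicit but is not required for the level of detail the paper aims at.
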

\begin{proof}
	It suffices to consider those $D$ that locally look like only one of the summands in \Cref{DiffOpLook}. Given a chart $(U,\varphi)$ and a trivialization $\Psi$ of $S$, if 
	\[
		\bigl(Du\bigr) (p)
		=
		\Psi^\mi
		\bigl(
			p,
			B (p) \cdot (\psi \circ u)(p)
		\bigr),
		\quad
		B\in C^\infty \bigl(U, \mathrm{M}_{k} (\mathbb{C})\bigr),
	\]
	then $D$ is itself only a multiplication operator (albeit by a matrix), and so it in fact commutes with $\M{g}$. So consider the case in which
	\[
		\bigl(Du\bigr) (p)
		=
		\Psi^\mi
		\left(
			p,
			A (p) \cdot {\partial_{j} (\psi \circ u\circ \varphi^\mi)}_{|\varphi (p)}
		\right),\quad
		A\in C^\infty \bigl(U, \mathrm{M}_{k} (\mathbb{C})\bigr),
		\]
	for some $1\leq j \leq n$. We compute for $u\in \Gamma^\infty (M;S)$ and $p\in U$:
	\begin{align*}
		\comm{D}{\M{g}}u (p)
		&=
		D \bigl(  g  u\bigr) (p)
		- 
		g  (p) \bigl(Du\bigr)(p)\\
		&=
		\Psi^\mi
		\left(
		p,
		A (p) \cdot {\partial_{j} \bigl(\psi  \circ ( g  u)\circ \varphi^\mi\bigr)}_{|\varphi (p)}
		\right)\\
		&\qquad- g  (p)
		\Psi^\mi
		\left(
		p,
		A (p) \cdot {\partial_{j} (\psi  \circ u\circ \varphi^\mi)}_{|\varphi (p)}
		\right) 
		.
	\end{align*}
	As $ g  (p)$ is just a scalar and $\Psi^\mi\left(p,\,\cdot\,\right)$ and $\psi $ are linear, we get
	\begin{align*}
		\comm{D}{\M{g}}u (p)
		=\Psi^\mi
		&\left(
		p,
		A (p) \cdot {\partial_{j} \bigl((g\circ\varphi^\mi) \cdot (\psi  \circ  u\circ \varphi^\mi)\bigr)}_{|\varphi (p)}
		\right)
		\\
		- 
		\Psi^\mi
		&\left(
		p,
		g  (p)\cdot A (p) \cdot {\partial_{j} (\psi  \circ u\circ \varphi^\mi)}_{|\varphi (p)}
		\right).
	\end{align*}
	By the product rule,
	\[
		\partial_{j} \bigl((g\circ\varphi^\mi) \cdot (\psi  \circ  u\circ \varphi^\mi)\bigr)_{|\varphi (p)}
		\!
		=
		{\partial_{j} \bigl( g \circ\varphi^\mi\bigr)}_{|\varphi (p)} (\psi  \circ u (p))
		+
		 g (p) {\partial_{j} \bigl(\psi  \circ u \circ \varphi^\mi\bigr)}_{|\varphi (p)},
	\]
	so we arrive at 
	\begin{align}\label{commDMlocally}
		\comm{D}{\M{g}}u (p)
		&=
		\Psi^\mi
		\left(
		p,
		{\partial_{j} \bigl( g \circ\varphi^\mi\bigr)}_{|\varphi (p)} \cdot A (p) 
		\cdot  (\psi  \circ u (p))
		\right).
	\end{align}
\end{proof}

\begin{defn}
	The \textit{symbol $\sigma_{D}$} of a differential operator $D$ is the $\RR$\-/vector bundle morphism
	\[
		\sigma_{D}\colon 
		T^*M \to \End (S)
	\]
	defined as follows: given a cotangent vector $\xi\in T^*_{p} M$ at $p$, take a chart $(U,\varphi)$ around $p\in M$ and a trivialization of $S_U$ as in \Cref{ChartTriv}. Suppose $D$ locally looks as in \Cref{DiffOpLook}, and write \mbox{$\xi = \sum_{j=1}^n \xi_{j} \mathrm{d}\varphi^j_{p}$,} where \mbox{$\{\mathrm{d}\varphi^j_{p}\}_{j}$} denotes the basis of $T^*_{p} M$ that is dual to the basis \mbox{$\{\frac{\partial}{\partial\varphi^j}|_{p}\}_{j}$} of $T_{p} M$. Then we define for $\eta\in S_{p}$,
	\[
		\sigma_{D} (p, \xi)\eta
		:=
		\Psi^\mi
		\left(p,
			\sum_{j=1}^{n}
			\xi_{j} A^j (p) \psi(\eta)
		\right).
	\]
\end{defn}

\begin{rmk}
	In \Cref{commDMisbdd}, we have actually shown that 
	\begin{align*}
		\comm{D}{\M{g}}u (p)
		=
		\sigma_{D} (p, \mathrm{d} g_{|p}) \bigl( u(p)\bigr)
		.
	\end{align*}
\end{rmk}

\begin{lem}
	The definition of $\sigma_{D}$ does not depend on the choice of $\Psi$ or $\varphi$.
\end{lem}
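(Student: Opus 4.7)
The plan is to exploit the intrinsic characterization of the symbol recorded in the preceding remark: in any chart\-/plus\-/trivialization used to define $\sigma_D$, one has $\comm{D}{\M{g}}u(p) = \sigma_{D}(p, \mathrm{d} g_{|p})(u(p))$. The point is that the left\-/hand side is manifestly intrinsic---$D$ is presented as a map $\Gamma^\infty(M;S)\to\Gamma^\infty(M;S)$, $\M{g}$ is pointwise multiplication by the scalar function $g$, and evaluation at $p$ involves no coordinate---whereas the right\-/hand side is the chart\-/dependent quantity we need to understand. So the strategy is to realise every value $\sigma_{D}(p,\xi)\eta$ as such a commutator, whence chart\-/independence is immediate.

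Concretely, given $p\in M$, $\xi\in T^*_p M$, and $\eta\in S_p$, the first step is to produce a global smooth function $g\in C^\infty(M)$ with $\mathrm{d} g_{|p} = \xi$ and a global smooth section $u\in\Gamma^\infty(M;S)$ with $u(p)=\eta$. Both exist by a routine partition\-/of\-/unity/bump construction: in a chart and trivialisation about $p$, affine functions realise any prescribed differential at $p$ and constant local sections realise any prescribed fibre value, and one then multiplies by a cut\-/off supported in the chart domain. The next step is to unpack the remark in the chosen chart $(\varphi,\Psi)$: the explicit formula \eqref{commDMlocally} exhibits $\comm{D}{\M{g}}u(p)$ as a function of $\mathrm{d} g_{|p}$ and $u(p)$ alone, and a direct comparison with the definition shows that this function equals $\sigma_{D}(p,\xi)\eta$.

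Finally, let $(\varphi',\Psi')$ be any other chart\-/plus\-/trivialisation around $p$ and denote by $\sigma_{D}'(p,\xi)\eta$ the value produced by the definition in this second chart. Carrying out the computation of \Cref{commDMisbdd} in the primed chart gives the analogous identity $\comm{D}{\M{g}}u(p) = \sigma_{D}'(p,\xi)\eta$. Since the left\-/hand side is the same intrinsic vector in $S_p$ in both cases, we conclude $\sigma_{D}(p,\xi)\eta = \sigma_{D}'(p,\xi)\eta$, which is the assertion. The only piece of genuine work is the existence of $g$ and $u$ realising the prescribed first\-/order data at $p$, and this is standard; the alternative route of tracking how the matrices $A^j$ and $B$ transform under change of coordinates (via the Jacobian of $\varphi'\circ\varphi^{\mi}$) and change of trivialisation (via the transition matrix of $\Psi'\circ\Psi^{\mi}$) would require an opaque chain\-/rule calculation, so the commutator viewpoint is what makes the lemma essentially automatic.
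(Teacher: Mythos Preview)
Your argument is correct and takes a genuinely different route from the paper's own proof. You leverage the intrinsic identity $\comm{D}{\M{g}}u(p)=\sigma_{D}(p,\mathrm{d}g_{|p})\bigl(u(p)\bigr)$, which the paper records in the remark just preceding the lemma, and observe that once one can realise any $(\xi,\eta)$ as $(\mathrm{d}g_{|p},u(p))$ via a bump\-/function construction, chart\-/independence of the symbol is immediate from chart\-/independence of the commutator. This is clean and conceptually the ``right'' argument.

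Amusingly, the paper does precisely what you dismiss in your final sentence as the ``opaque chain\-/rule calculation'': it fixes a second trivialisation $\Omega$ and writes down the transition matrix $H(p)$ with $\Psi=H\cdot\Omega$ fibrewise, then checks by substituting carefully chosen sections that the coefficient matrices transform as $A^j(p)=H(p)E^j(p)H(p)^{\mi}$, from which invariance under change of $\Psi$ follows. It then repeats the exercise for a second chart $\gamma$, using the chain rule to relate $\partial_j(\psi\circ u\circ\varphi^{\mi})$ to $\partial_l(\psi\circ u\circ\gamma^{\mi})$ via the Jacobian of $\varphi\circ\gamma^{\mi}$, and verifies that the coordinate expressions for $\xi$ transform contragrediently so that $\sum_j\xi_j A^j(p)=\sum_l\nu_l F^l(p)$. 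Your approach is shorter and more illuminating; the paper's approach has the minor advantage of making the transformation laws for the $A^j$ explicit (conjugation under change of frame, Jacobian contraction under change of chart), which can be useful elsewhere but is not needed for the lemma as stated.
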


\begin{proof}
	First, assume that $\Omega$ is another trivialization of $S_U$, and let $\omega:= \mathrm{pr}_{2} \circ \Omega$. Since the fibre maps of both $\Psi$ and $\Omega$ are linear isomorphisms, there exists a smooth map
	\[
	 	H\colon  U\to \mathrm{GL}_{k} (\mathbb{C})\qquad \text{given by}
		\begin{tikzcd}
			&
			\mathbb{C}^k
			\arrow[rr, bend right, dashed, "H(p)"']
			&
			\arrow[l, "{\Omega (p,\,\cdot\,)}"', "\cong"]
			S_{p}
			\arrow[r, "{\Psi(p,\,\cdot\,)}", "\cong"']
			&
			\mathbb{C}^k
		\end{tikzcd}.
	\]
	Moreover, we can write $D$ also in the form
	\begin{align*}
		\bigl(Du\bigr) (p)
		=
		\sum_{j=1}^{n}
		\,&\Omega^\mi
		\left(
		p,
		E^j (p) \cdot {\partial_{j} (\omega \circ u\circ \varphi^\mi)}_{|\varphi (p)}
		\right)\\
		+
		&\Omega^\mi
		\left(
		p,
		E (p) \cdot (\omega \circ u)(p)
		\right),
	\end{align*}
	for all $u\in\Gamma^\infty (M;S)$.	By clever choices of $u$ and some use of the product rule, one can conclude that
	\begin{equation*}
		A^j (p)
		=
		H(p) E^j (p) H^\mi (p)
	\end{equation*}
	for each $1\leq j \leq n$.	Therefore, for any $\eta\in S_{p}$,
	\[
		A^j (p) \psi(\eta)
		=
		H(p) E^j (p) \omega(\eta)
	\]
	and so
	\[
		\Psi^\mi \left(p, A^j (p) \psi(\eta)\right)
		=
		\Psi^\mi \left(p, H(p) E^j (p) \omega(\eta) \right)
		=
		\Omega^\mi \left(p, E^j (p) \omega(\eta) \right).
	\]
	We see from this that $\sigma_{D} (p, \xi)$ does not depend on the choice of $\Psi$.
	
	Next, let $\gamma$ be another chart around $p$. Again, we can write $D$ in the form
	\begin{align*}
	\bigl(Du\bigr) (p)
		=
		\sum_{l=1}^{n}
		\,&\Psi^\mi
		\left(
		p,
		F^l (p) \cdot {\partial_l (\psi \circ u\circ \gamma^\mi)}_{|\gamma (p)}
		\right)\\
		+
		&\Psi^\mi
		\left(
		p,
		F (p) \cdot (\psi \circ u)(p)
		\right).
	\end{align*}
	We get that
	\begin{align*}
		\sum_{j=1}^{n}
		A^j (p) \cdot {\partial_{j} (\psi \circ u\circ \varphi^\mi)}_{|\varphi (p)}	
		&=
		\sum_{l=1}^{n}
		F^l (p) \cdot {\partial_l (\psi \circ u\circ \gamma^\mi)}_{|\gamma (p)}
		\\
		&=
		\sum_{l=1}^{n}
		F^l (p) \cdot
		\sum_{j=1}^n
		{\partial_{j} (\psi \circ u\circ \varphi^\mi)}_{|\varphi (p)}
		{\partial_l (\varphi \circ \gamma^\mi)^j}_{|\gamma (p)},
	\end{align*}
	and so another clever choice of $u$ yields
	\begin{align*}
		A^j (p)
		=
		\sum_{l=1}^{n}
		{\partial_l (\varphi \circ \gamma^\mi)^j}_{|\gamma (p)}
		F^l (p).
	\end{align*}
	Moreover, if $\xi = \sum_l \nu_l \;\mathrm{d}\gamma^l_{p}$, then
	\[
		\nu_l = \xi \left( {\frac{\partial}{\partial\gamma^l}}_{|p}\right)
		=
		\sum_{j=1}^n \xi_{j} {\partial_l \left(\varphi\circ\gamma^\mi\right)^j}_{|\gamma (p)}.
	\]	
	Combined, we have for any $v\in \mathbb{C}^k$
	\[
		\sum_{j=1}^{n}
		\xi_{j} A^j (p)  v 
		=
		\sum_{j=1}^{n}
		\xi_{j}
		\left(
			\sum_{l=1}^{n}
			{\partial_l (\varphi \circ \gamma^\mi)^j}_{|\gamma (p)}
			F^l (p)
		\right)
		 v 
		=
		\sum_{l=1}^{n}
		\nu_l
			F^l (p)
		 v 	,
	\]
	and so we conclude that $\sigma_{D} (p, \xi)$ also does not depend on the choice of $\varphi$.
\end{proof}

\begin{defn}
	We say that a differential operator is \textit{elliptic} if its symbol $\sigma_{D}$ maps each \mbox{$(p, \xi)$} in $T^*M$ with $\xi\neq 0$ to an invertible endomorphism of $S_{p}$.
\end{defn}

\begin{example}\label{ex:Dirac-is-elliptic}
	The Dirac operator we mentioned in \Cref{ex:Dirac-on-mfd-is-symmetric} is elliptic: using \Cref{eq:Dirac-locally}, one can show that its symbol is given by
	\[
		\sigma_{\Dirac{M}} (p, \xi)^2
		=
		-\norm{\xi}^2,
	\]
	see \cite[11.1.1 Def.]{HigRoe:KHom} or \cite[Lemma 5.1]{LM:SpinGeom}.
\end{example}


\subsection{Sobolev Spaces}\label{subsec:Sobolev}

We want to construct the Sobolev space associated to our vector bundle. Recall first that for $f\in C_{c}^\infty (\RR^n,\mathbb{C})$, the Sobolev norm is defined by
\[
	\norm{f}^2_{1,\RR^n} :=
	\norm{f}_{2}^2 +
	\sum_{i=1}^{n} \norm{\frac{\partial f}{\partial x_{i}}}_{2}^2
	.
\]

Take an atlas of $M$ whose charts are small enough to also allow smooth, fibrewise isometric trivializations as in \Cref{ChartTriv}. For a compact subset $K$ of $M$, let $\{(U_{i}, \varphi_{i})\}_{i=1}^l$ be a subcover of charts, and denote the corresponding trivialisations of $S$ by $\Psi_{i} = \pi\times\psi_{i}$. These induce maps
	$$\Psi_{i}^*\colon  \Gamma^\infty (U_{i}; S_{| U_{i} }) \to \bigl(C^\infty(V_{i})\bigr)^k$$ which send a section $v\colon  U_{i} \to S_{| U_{i} }$ to the map
\[
	\RR^n \supseteq V_{i} \ni x \mapsto
	\psi_{i} \Bigl( v \bigl(\varphi_{i}^\mi (x)\bigr) \Bigr).
\]
As explained in \Cref{po1-for-compact}, we can pick smooth compactly supported functions 
\[
	\rho_{1},\ldots, \rho_l\colon  M \to [0,1]
	\text{ such that }
	\mathsf{supp}(\rho_{i})\subseteq U_{i}
	\;\text{ and }\;
	\sum_{i=1}^l \rho_{i}(p) = 1
	\text{ for } p\in K.
\]
We define for $u\in \Gamma^\infty(K;S)$ (that is, sections of the bundle supported in $K$):
\[
	\norm{u}_{1} := \sum_{i=1}^l \norm{\Psi_{i}^* ( \rho_{i} \cdot u )}_{1,\RR^n}.
\]
Though this norm relies heavily on the choices involved, its equivalence class does not.  We define $\mathrm{L}^2_{1} (K;S)$ to be the completion of $\Gamma^\infty(K;S)$ with respect to this norm. Let us gather some facts about Sobolev spaces that we will need later:

\begin{lem}\label{lem:sobolev-norm-bigger}
	For $K\subseteq M$ compact, there exists a number $c>0$ such that for all $u\in \mathrm{L}^2_{1} (K;S)$,
	\[
		\norm{u}_{2} \leq c \norm{u}_{1}.
	\]
\end{lem}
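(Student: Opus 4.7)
The plan is to leverage the definition of $\norm{u}_1$ as a sum over local trivializations and compare each local $\mathrm{L}^2$-piece to the corresponding local Sobolev norm on $\RR^n$, where the inequality is immediate. It suffices to establish the estimate on the dense subspace $\Gamma^\infty(K;S)$ and then extend by continuity.

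First, I would use the partition of unity $\{\rho_i\}_{i=1}^l$ to decompose $u = \sum_i \rho_i u$ on $K$ (and $\rho_i u \equiv 0$ outside $K$), and apply the triangle inequality to get $\norm{u}_2 \leq \sum_{i=1}^l \norm{\rho_i u}_2$. Next, for each $i$, I would pass to the chart $\varphi_i\colon U_i \to V_i$ using the change-of-variables formula: since the trivialization $\Psi_i$ is fibrewise isometric, we have $\norm{(\rho_i u)(p)}_{S_p} = \norm{\Psi_i^*(\rho_i u)(\varphi_i(p))}_{\mathbb{C}^k}$, so
\[
	\norm{\rho_i u}_2^2 = \int_{U_i} \norm{(\rho_i u)(p)}^2_{S_p} \,\mathrm{d}\mu(p) = \int_{V_i} \norm{\Psi_i^*(\rho_i u)(x)}^2_{\mathbb{C}^k} \cdot f_i(x) \,\mathrm{d}\lambda(x),
\]
where $f_i$ is the Radon-Nikodym density of $(\varphi_i)_*\mu_{U_i}$ with respect to Lebesgue measure. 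By the standing assumption (\Cref{rmk:standAssu-L}) that $\supnorm{f_i}\leq L$, this is bounded by $L\norm{\Psi_i^*(\rho_i u)}^2_{2,\RR^n}$.

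Finally, by the very definition of the Sobolev norm on $\RR^n$, we have $\norm{f}^2_{2,\RR^n} \leq \norm{f}^2_{1,\RR^n}$ for any $f \in C_c^\infty(\RR^n,\mathbb{C})^k$, since the latter adds nonnegative derivative terms. Combining these estimates,
\[
	\norm{u}_2 \leq \sum_{i=1}^l \norm{\rho_i u}_2 \leq \sqrt{L}\sum_{i=1}^l \norm{\Psi_i^*(\rho_i u)}_{2,\RR^n} \leq \sqrt{L}\sum_{i=1}^l \norm{\Psi_i^*(\rho_i u)}_{1,\RR^n} = \sqrt{L}\,\norm{u}_1,
\]
so we may take $c = \sqrt{L}$. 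Since the constant depends only on the atlas, trivializations, and partition of unity chosen to define $\norm{\,\cdot\,}_1$, this inequality extends from $\Gamma^\infty(K;S)$ to all of $\mathrm{L}^2_1(K;S)$ by continuity.

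There is no real obstacle here: the only subtlety is making sure the change of variables applies globally on $U_i$ (which is fine because $\rho_i u$ is supported in $U_i$) and invoking the standing bound $L$ on the densities rather than treating each $f_i$ separately.
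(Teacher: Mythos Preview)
Your argument is correct and follows essentially the same route as the paper: pass to local charts via the fibrewise isometric trivializations, use the standing bound $L$ on the Radon--Nikodym densities, and invoke the trivial inequality $\norm{f}_{2}\leq\norm{f}_{1,\RR^n}$ on each piece. The only difference is in the bookkeeping at the end: you use the triangle inequality $\norm{u}_{2}=\norm{\sum_i \rho_i u}_{2}\leq\sum_i\norm{\rho_i u}_{2}$ directly, whereas the paper works in the other direction and bounds $\sum_i\norm{\rho_i u}_{2}$ from below via $\sum_i\rho_i^2\geq\tfrac{1}{l}(\sum_i\rho_i)^2$, picking up an extra factor of $\sqrt{l}$ in the constant. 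Your version is slightly cleaner and yields the sharper constant $c=\sqrt{L}$ instead of the paper's $c=\sqrt{Ll}$.
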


\begin{proof}
	Since $\norm{f}_{1,\RR^n} \geq \norm{f}_{2}$, we get
	\[
		\norm{u}_{1} 
		\geq
		\sum_{i=1}^l \norm{\Psi_{i}^* ( \rho_{i} \cdot u )}_{2}.
	\]
	
	Let $f_{i}, g_{i} = \frac{1}{f_{i}}$ be as in \Cref{RNderivative-g} for $(U_{i}, \varphi_{i})$. Recall that we assumed in \Cref{rmk:standAssu-L} that $\supnorm{f_{i}}\leq L$ for some number $L$ and all $i$. For $v\in \Gamma^\infty_{c} (U_{i}; S_{| U_{i} })$, we have
	\[
		 \norm{\Psi_{i}^* ( v )}_{2}^2
		 =
		 \int\limits_{\RR^n} \norm{\psi_{i} \Bigl( v \bigl(\varphi_{i}^\mi (x)\bigr) \Bigr)}_{\mathbb{C}^k}^2 \,\mathrm{d}\lambda 
		 =
		 \int\limits_{U_{i}} \norm{\psi_{i} \bigl( v (p) \bigr)}_{\mathbb{C}^k}^2 (g_{i}\circ\varphi)(p)\,\mathrm{d}\mu ,
	\]
	and since $\psi_{i}$ is isometric, we get 
	\[
		\norm{\Psi_{i}^* ( v )}_{2}^2
		\geq
		\frac{1}{L}
		\int\limits_{U_{i}} \norm{v (p)}_{S_{p}}^2\,\mathrm{d}\mu 
		=
		\frac{1}{L} \norm{v}_{2}^2
		.
	\]
	Thus,
	\[
		\norm{u}_{1} 
		\geq
		\frac{1}{\sqrt{L}} \sum_{i=1}^l \norm{\rho_{i} \cdot u }_{2}.
	\]
	Furthermore,
	\begin{align*}
		\left(\sum_{i=1}^l \norm{\rho_{i} \cdot u }_{2}\right)^2
		&\geq
		\sum_{i=1}^l \norm{\rho_{i} \cdot u }^2_{2}
		=
		\sum_{i=1}^l \left( \int\limits_{K} \rho_{i}(p)^2 \norm{ u  (p)}_{S_{p}}^2\,\mathrm{d}\mu \right)\\
		&=
		\int\limits_{K} \left(\sum_{i=1}^l \rho_{i}(p)^2\right) \norm{u  (p)}_{S_{p}}^2\,\mathrm{d}\mu \\
		&\geq
		\int\limits_{K} \frac{1}{l}\left(\sum_{i=1}^l \rho_{i}(p)\right)^2 \norm{u  (p)}_{S_{p}}^2\,\mathrm{d}\mu 
		=
		\frac{1}{l} \int\limits_{K} \norm{u  (p)}_{S_{p}}^2\,\mathrm{d}\mu 
		=
		\frac{1}{l} \norm{u}_{2}^2,
	\end{align*}
	so that all in all
	\begin{equation*}
		\norm{u}_{1} 
		\geq
		\frac{1}{\sqrt{L\cdot l}}\norm{u}_{2}.\qedhere
	\end{equation*}
\end{proof}

\begin{prop}[{\cite[IV.2.2]{Wells:DiffAna}} - without proof]\label{prop:ctsExt}
	Every differential operator $D$ on $M$ has a continuous extension to an operator \mbox{$\mathrm{L}_{1}^2 (K;S)\to \mathrm{L}^2 (K;S)$} where $K\subseteq M$ is any compact subset.
\end{prop}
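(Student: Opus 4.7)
The plan is to show that $D$ is bounded as a map $\Gamma^\infty(K;S) \to \mathrm{L}^2(K;S)$ when the domain is equipped with the Sobolev norm $\|\cdot\|_1$, and then extend by continuity to the completion $\mathrm{L}^2_1(K;S)$. The localness property~\hyperref[DiffOpPreservesSupp]{a)} already ensures that $Du$ is supported in $K$ whenever $u$ is, so the target makes sense.

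First I will use the trivializing atlas $\{(U_i,\varphi_i)\}_{i=1}^l$ and partition of unity $\{\rho_i\}$ that define $\|\cdot\|_1$ to decompose $u = \sum_i \rho_i u$, so that by linearity of $D$ it suffices to bound each $\|D(\rho_i u)\|_2$ by $\|\Psi_i^*(\rho_i u)\|_{1,\RR^n}$. Since $\rho_i u$ is supported in the compact set $\mathsf{supp}(\rho_i) \subseteq U_i$, so is $D(\rho_i u)$, and I can compute entirely in the local chart. Writing $\tilde v := \Psi_i^*(\rho_i u)$ and using the local formula \Cref{DiffOpLook}, the fibrewise isometric trivialization $\psi_i$ gives
\[
\norm{D(\rho_i u)(p)}_{S_p}
= \Bigl\| \sum_{j=1}^n A^j(p)\cdot(\partial_j \tilde v)(\varphi_i(p)) + B(p)\cdot \tilde v(\varphi_i(p))\Bigr\|_{\mathbb{C}^k}.
\]

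The matrix-valued coefficients $A^j$ and $B$ are smooth on $U_i$, hence bounded in operator norm on the compact set $\mathsf{supp}(\rho_i)$; let $C_i$ be a common bound. The elementary inequality $\|x_1+\cdots+x_{n+1}\|^2 \leq (n+1)\sum\|x_k\|^2$ then yields
\[
\norm{D(\rho_i u)(p)}_{S_p}^2 \leq (n+1)C_i^2 \Bigl(\sum_{j=1}^n \norm{(\partial_j \tilde v)(\varphi_i(p))}_{\mathbb{C}^k}^2 + \norm{\tilde v(\varphi_i(p))}_{\mathbb{C}^k}^2\Bigr).
\]
Integrating over $U_i$ against $\mathrm{d}\mu$, changing variables via $\varphi_i$ (which introduces the Radon--Nikodym factor $f_i\leq L$ from \Cref{rmk:standAssu-L}), and recognizing the right-hand side as $L\cdot \|\tilde v\|_{1,\RR^n}^2$ up to constants, I get a bound $\|D(\rho_i u)\|_2 \leq C_i' \|\Psi_i^*(\rho_i u)\|_{1,\RR^n}$. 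Summing over $i$ and setting $C := \max_i C_i'$ delivers $\|Du\|_2 \leq C \|u\|_1$ on $\Gamma^\infty(K;S)$, which extends by density to the desired bounded operator on $\mathrm{L}^2_1(K;S)$.

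The only real work is the bookkeeping in the local coordinate estimate; the key enabling fact is that the compactness of $\mathsf{supp}(\rho_i)$ forces the smooth matrix-valued symbols $A^j, B$ to be uniformly bounded there. No deep obstacle is present, but the proof does depend essentially on having chosen trivializations that are fibrewise isometric (so that $\psi_i$ preserves pointwise norms) and on the standing uniform bound on Radon--Nikodym derivatives from \Cref{rmk:standAssu-L}.
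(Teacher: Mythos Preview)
The paper does not actually prove this proposition; it is explicitly marked ``without proof'' and defers to \cite[IV.2.2]{Wells:DiffAna}. So there is no in-paper argument to compare against.

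Your proof is correct and is the standard one. The key ingredients---localizing via the partition of unity $\{\rho_i\}$, invoking Property~\hyperref[DiffOpPreservesSupp]{a)} so that $D(\rho_i u)$ is supported in the compact set $\mathsf{supp}(\rho_i)\subseteq U_i$, bounding the smooth matrix coefficients $A^j,B$ on that compact set, and absorbing the change-of-measure factor $f_i\leq L$ from \Cref{rmk:standAssu-L}---are all in order, and the final triangle-inequality step $\|Du\|_2\leq\sum_i\|D(\rho_i u)\|_2\leq C\sum_i\|\Psi_i^*(\rho_i u)\|_{1,\RR^n}=C\|u\|_1$ is exactly how the Sobolev norm was defined. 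One small clarification worth making explicit: the individual pieces $D(\rho_i u)$ need not be supported in $K$ (only in $\mathsf{supp}(\rho_i)$), but this does not affect the norm estimate, and Property~\hyperref[DiffOpPreservesSupp]{a)} applied directly to $u$ on the open set $M\setminus K$ already ensures $\mathsf{supp}(Du)\subseteq K$, so the target space is as claimed.
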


\begin{cor}\label{cor:SobInMaxdom}
	For $D$ a symmetric differential operator on $M$ and $K\subseteq M$ compact, $\mathrm{L}_{1}^2 (K;S)$ is contained in the domain of $D^*$.
\end{cor}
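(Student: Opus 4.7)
The plan is to exploit symmetry of $D$ together with \Cref{prop:ctsExt} and \Cref{lem:sobolev-norm-bigger} to transfer the formal adjoint relation $\inner{D\xi}{v}=\inner{\xi}{Dv}$ from $\Gamma^\infty(K;S)$ up to all of $\mathrm{L}^2_1(K;S)$ by approximation.

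First, given $u\in \mathrm{L}^2_1(K;S)$, I would invoke the definition of the Sobolev space to pick a sequence $u_n\in\Gamma^\infty(K;S)$ with $\norm{u_n-u}_1\to 0$. Because $K$ is compact, each $u_n$ has compact support and so lies in $\dom D = \Gamma^\infty_c (M;S)$. Denote by $\widetilde{D}\colon \mathrm{L}^2_1(K;S)\to\mathrm{L}^2(K;S)$ the continuous extension given by \Cref{prop:ctsExt}; then $Du_n = \widetilde{D}u_n \to \widetilde{D}u$ in $\mathrm{L}^2$. Simultaneously, by \Cref{lem:sobolev-norm-bigger}, $\norm{u_n-u}_2\le c\norm{u_n-u}_1\to 0$, so also $u_n\to u$ in $\mathrm{L}^2$.

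Next, for arbitrary $\xi\in \dom D$, the symmetry of $D$ applied to the pair $\xi,u_n\in\dom D$ yields
\[
	\inner{D\xi}{u_n} = \inner{\xi}{D u_n}.
\]
Letting $n\to\infty$ and using continuity of the inner product together with the two convergences established above, I obtain
\[
	\inner{D\xi}{u} = \inner{\xi}{\widetilde{D} u}.
\]
The right-hand side satisfies $\bigl|\inner{\xi}{\widetilde{D}u}\bigr|\le \norm{\widetilde{D}u}_2\,\norm{\xi}_2$, so the map $\dom D\ni\xi\mapsto\inner{D\xi}{u}$ is bounded by the constant $\norm{\widetilde{D}u}_2$. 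By the definition of the adjoint recalled in \Cref{ssec:unbddop-terms}, this exactly says $u\in\dom D^*$ (and incidentally identifies $D^* u = \widetilde{D}u$).

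There is no real obstacle here: the only thing to be a bit careful about is that the approximating sequence $u_n$ actually lies in $\dom D$, which is why we need $K$ to be compact (so that $\Gamma^\infty(K;S)\subseteq \Gamma^\infty_c(M;S)$), and that we are allowed to pass the limit through both slots of the inner product, which is justified by the two distinct convergence statements in $\mathrm{L}^2$ (one for $u_n$, one for $Du_n$).
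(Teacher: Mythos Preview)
Your proof is correct and follows essentially the same approach as the paper: approximate $u$ in $\norm{\cdot}_1$ by smooth sections $u_n\in\Gamma^\infty(K;S)\subseteq\dom D$, use \Cref{prop:ctsExt} and \Cref{lem:sobolev-norm-bigger} to control $Du_n$ and $u_n$ in $\mathrm{L}^2$, and invoke symmetry on the pair $(u_n,\xi)$. Your version is in fact slightly cleaner---you pass directly to the limit in the identity $\inner{D\xi}{u_n}=\inner{\xi}{Du_n}$, whereas the paper makes an explicit $v$-dependent choice of $n$ to bound $\abs{\inner{u}{Dv}}$---but the ingredients and structure are the same.
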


\begin{proof}
	For $u\in \mathrm{L}_{1}^2 (K;S)$, we need to show that there exists $C>0$ such that 
	\begin{equation*}
		\abs{\inner{u}{Dv}} \leq C \cdot \norm{v}_{2}
	\end{equation*}
	for all $v\in \Gamma^\infty_{c} (M;S)=\dom D$.
	Let $(u_{n})_{n}$ be a sequence in $\Gamma^\infty(K;S)$ converging to $u$ in $\norm{\,\cdot\,}_{1}$. By \Cref{prop:ctsExt}, $(Du_{n})_{n}$ converges in $\mathrm{L}^2 (M;S)$, so the $\norm{\,\cdot\,}_{2}$\=/norm of the sequence is bounded by some number $N$. For $0\neq v$, take some big enough $n$ such that $\norm{u - u_{n}}_{1} \leq \frac{\norm{v}_{2}}{c(\norm{D v}_{2} +1)}$ where $c>0$ is as in \Cref{lem:sobolev-norm-bigger}, and compute
	\begin{align*}
		\abs{\inner{u}{Dv}}
		&\leq
		\abs{\inner{u-u_{n}}{ Dv}}
		+
		\abs{\inner{u_{n}}{D v}}
		=
		\abs{\inner{u-u_{n}}{ Dv}}
		+
		\abs{\inner{Du_{n}}{ v}}\notag\\
		&\leq
		\norm{u-u_{n}}_{2}\norm{ Dv}_{2}
		+
		\norm{Du_{n}}_{2} \norm{ v}_{2}
		<
		(1+N) \norm{v}_{2}.\qedhere
	\end{align*}
\end{proof}

We will need the following \namecref{Rellich}s later, but we will not prove them here.
\begin{prop}[Rellich Lemma; {\cite[10.4.3]{HigRoe:KHom}}, {\cite[IV.1.2]{Wells:DiffAna}} - without proof]\label{Rellich}
	For $K\subseteq M$ compact, the inclusion $\mathrm{L}^2_{1} (K; S)\hookrightarrow \mathrm{L}^2 (K; S)$ is a compact operator.
\end{prop}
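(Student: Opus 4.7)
The plan is to reduce the claim to the classical Euclidean Rellich--Kondrachov theorem, which asserts that for any bounded open set $V\subseteq\RR^n$ the inclusion of $H^1_0(\overline{V}):=\{f\in H^1(\RR^n):\mathsf{supp}(f)\subseteq\overline{V}\}$ into $L^2(V)$ is compact (proved by Fourier analysis or Arzel\`a--Ascoli; I would cite this from Evans' PDE book). Everything else is bookkeeping with the partition of unity and trivializations already used in the definition of $\|\cdot\|_1$.

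I would pick the atlas $\{(U_i,\varphi_i)\}_{i=1}^l$ of $K$ and the fibrewise isometric trivializations $\Psi_i=\pi\times\psi_i$ used to define $\|\cdot\|_1$, together with the partition of unity $\rho_1,\ldots,\rho_l$ from \Cref{po1-for-compact}. For each $i$, the linear map
\[
T_i\colon\mathrm{L}^2_{1}(K;S)\longrightarrow H^1_0(\overline{V_i})^k,\qquad u\longmapsto \Psi_i^*(\rho_i\cdot u),
\]
is bounded directly from the definition $\|u\|_1=\sum_{i=1}^l\|\Psi_i^*(\rho_i u)\|_{1,\RR^n}$. Conversely, choosing auxiliary smooth bump functions $\beta_i\in C_c^\infty(U_i)$ with $\beta_i\equiv 1$ on $\mathsf{supp}(\rho_i)$, define
\[
R_i\colon L^2(V_i)^k\longrightarrow \mathrm{L}^2(K;S),\qquad g\longmapsto \beta_i\cdot\bigl(\Psi_i^{-1}\!\circ(\mathrm{id}\times g)\circ\varphi_i\bigr),
\]
extended by zero outside $U_i$. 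The standing bound $\supnorm{f_i},\supnorm{g_i}\leq L$ from \Cref{rmk:standAssu-L}, combined with the fact that $\psi_i$ is fibrewise isometric (so that the change of variables in the proof of \Cref{lem:sobolev-norm-bigger} goes through), shows $R_i$ is bounded.

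Now the composition
\[
\mathrm{L}^2_{1}(K;S)\xrightarrow{\;T_i\;}H^1_0(\overline{V_i})^k\xhookrightarrow{\;\text{inc}\;}L^2(V_i)^k\xrightarrow{\;R_i\;}\mathrm{L}^2(K;S)
\]
is compact, since the middle inclusion is compact by the Euclidean Rellich--Kondrachov theorem applied componentwise, and compact operators form an ideal. Because $\beta_i\equiv 1$ on $\mathsf{supp}(\rho_i)$, one checks $R_i(T_i u)=\rho_i\cdot u$ in $\mathrm{L}^2(K;S)$; summing over $i$ and using $\sum_i\rho_i\equiv 1$ on $K$, the inclusion $\mathrm{L}^2_{1}(K;S)\hookrightarrow\mathrm{L}^2(K;S)$ equals the finite sum $\sum_{i=1}^l R_i\circ\text{inc}\circ T_i$, hence is compact.

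The main obstacle is really just the Euclidean Rellich--Kondrachov theorem itself, which I would treat as a black box; the remaining work is purely to verify that the local-to-global identifications $T_i, R_i$ are bounded in the appropriate norms, for which the standing bound on the Radon--Nikodym derivatives $f_i$ and the isometry of $\psi_i$ are exactly what is needed.
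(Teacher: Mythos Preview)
The paper explicitly labels this proposition ``without proof'' and simply cites \cite[10.4.3]{HigRoe:KHom} and \cite[IV.1.2]{Wells:DiffAna}; there is no argument in the paper to compare against. Your reduction to the Euclidean Rellich--Kondrachov theorem via the partition of unity and local trivializations is the standard route and is correct. One small point worth making explicit: the $V_i$ need not be bounded a priori, but since $\rho_i$ has compact support in $U_i$, the image $\varphi_i(\mathsf{supp}(\rho_i))$ is a fixed compact subset of $\RR^n$, so $T_i$ actually lands in $H^1_0$ of that compact set, and the Euclidean theorem applies.
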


\begin{prop}[G{\aa}rding's Inequality; {\cite[10.4.4]{HigRoe:KHom}} -  without proof]\label{Garding}
	Suppose $M$ is compact. If $D$ is an elliptic differential operator on $M$, then there is a constant $c>0$ such that, for all $u\in \mathrm{L}^2_{1} (M; S)$,
	\[
	c \cdot \norm{u}_{1} \leq \norm{u}_{2} + \norm{Du}_{2}.
	\]
\end{prop}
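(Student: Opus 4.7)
The plan is to prove G{\aa}rding's Inequality by first establishing it for constant\-/coefficient elliptic systems on $\RR^n$ via Fourier analysis, then upgrading to variable coefficients via a small\-/perturbation argument on shrunken supports, and finally patching the resulting local bounds into a global one via a partition of unity.

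For the constant\-/coefficient model, fix a first\-/order system $P_{0}=\sum_{j=1}^n A_{0}^j \partial_{j}$ acting on $C_{c}^\infty(\RR^n;\mathbb{C}^k)$ whose principal symbol $\sigma_{0}(\xi)=\sum_{j} \xi_{j} A_{0}^j$ is invertible for $\xi\neq 0$. Homogeneity together with compactness of the unit sphere in $\RR^n$ yields $\delta>0$ with $\norm{\sigma_{0}(\xi) v}_{\mathbb{C}^k}\geq \delta \abs{\xi}\,\norm{v}_{\mathbb{C}^k}$ for all $\xi,v$. Plancherel's identity then gives
\[
\norm{P_{0} f}_{2}^2 \;=\; \int_{\RR^n} \norm{\sigma_{0}(\xi)\widehat{f}(\xi)}_{\mathbb{C}^k}^2\, \mathrm{d}\lambda (\xi)\;\geq\; \delta^2\sum_{j=1}^n \norm{\partial_{j} f}_{2}^2,
\]
so that $\norm{f}_{1,\RR^n}^2 \leq \norm{f}_{2}^2+\delta^{-2}\norm{P_{0} f}_{2}^2$. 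Next, for a variable\-/coefficient operator $P=\sum_{j} A^j(x)\partial_{j}+B(x)$ with continuous $A^j$ and bounded $B$, I freeze coefficients at $x_{0}$ to obtain $P_{0}=\sum_j A^j(x_{0})\partial_{j}$; for $f$ supported in a small ball $B_{r}(x_{0})$, uniform continuity of the $A^j$ yields
\[
\norm{(P-P_{0})f}_{2}\leq \epsilon\, \norm{f}_{1,\RR^n} + C\norm{f}_{2},
\]
where $\epsilon\to 0$ as $r\to 0$. Combining this with the constant\-/coefficient bound and choosing $r$ small enough that $\epsilon < \delta/2$, I absorb the $\norm{f}_{1,\RR^n}$ term on the right and obtain a local G{\aa}rding estimate $\norm{f}_{1,\RR^n}\leq C'(\norm{Pf}_{2}+\norm{f}_{2})$ on each such ball.

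To globalize, compactness of $M$ lets me choose a finite trivializing atlas $\{(U_{i},\varphi_{i},\Psi_{i})\}$ with each $\varphi_{i}(U_{i})$ sitting inside a ball small enough for the previous step to apply, together with a subordinate partition of unity $\{\rho_{i}\}$ as in the definition of $\norm{\,\cdot\,}_{1}$. Then for $u\in\Gamma^\infty (M;S)$,
\[
\norm{u}_{1} \;=\; \sum_{i} \norm{\Psi_{i}^*(\rho_{i} u)}_{1,\RR^n} \;\leq\; \sum_{i} C_{i}\Bigl(\norm{\Psi_{i}^*(\rho_{i} u)}_{2} + \norm{P_{i}\,\Psi_{i}^*(\rho_{i} u)}_{2}\Bigr),
\]
where $P_{i}$ is the local form of $D$ transported through $\Psi_{i}$. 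The first term is controlled by $\norm{u}_{2}$ using the bounded Radon\-/Nikodym derivatives of \Cref{rmk:standAssu-L}; for the second, $\Psi_{i}^*$ intertwines $P_{i}$ with $D$, and writing $D(\rho_{i} u)=\rho_{i} Du+\comm{D}{\M{\rho_{i}}} u$ together with the boundedness of $\comm{D}{\M{\rho_{i}}}$ from \Cref{commDMisbdd} produces a bound $\norm{Du}_{2}+C_{i}'\norm{u}_{2}$. Summing over $i$ and rearranging gives the desired inequality with a suitable constant~$c$.

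The heart of the argument is the Fourier\-/analytic constant\-/coefficient step, which is precisely where ellipticity of $\sigma_{D}$ is used; the lower bound $\norm{\sigma_{0}(\xi) v}\gtrsim\abs{\xi}\norm{v}$ fails without ellipticity. The most delicate bookkeeping is the perturbation step, where the size of the admissible neighbourhood depends on the modulus of continuity of the symbol's coefficient matrices, and compactness of $M$ is what makes a finite cover of such small neighbourhoods available. Extending from smooth $u$ to general $u\in\mathrm{L}^2_{1}(M;S)$ is then routine using \Cref{prop:ctsExt} and density.
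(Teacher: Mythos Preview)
The paper does not prove this proposition; it is explicitly stated ``without proof'' and delegated to \cite[10.4.4]{HigRoe:KHom}, so there is no argument in the paper to compare your attempt against.

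That said, your sketch is the standard route to G{\aa}rding's inequality for first\-/order elliptic systems and is essentially correct: the constant\-/coefficient Plancherel step is where ellipticity enters, the freezing\-/of\-/coefficients perturbation argument is the usual way to pass to variable coefficients on small supports, and the partition\-/of\-/unity patching together with the commutator bound of \Cref{commDMisbdd} handles the globalization on a compact $M$. One small point worth tightening if you write this out in full: in the Plancherel step you should track the factor of $\mathbf{i}$ (or $2\pi\mathbf{i}$, depending on convention) in the symbol of $\partial_{j}$ so that the inequality $\norm{\sigma_{0}(\xi)\widehat{f}(\xi)}\geq \delta\abs{\xi}\norm{\widehat{f}(\xi)}$ matches the actual Fourier multiplier of $P_{0}$; this only affects constants. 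Another is that the local charts need not themselves have images contained in a single small ball; rather, one refines the cover so that each $\mathsf{supp}(\rho_{i})$ lies in a set on which the coefficient oscillation is below the absorption threshold, which compactness and continuity of the $A^j$ permit.
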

As mentioned in the introduction, to be able to invoke G{\aa}rding's Inequality is the reason why we need to assume ellipticity of $D$ in our main theorem.


\subsection{Fourier Transforms and Normalizing functions}\label{subsec:NormalizingAndFT}

Most of the statements below can be found in~\cite[Chapters~8 and~9]{Folland:RA}.

\begin{notation}
	For~$f\colon \RR^n \to \mathbb{C}$ and~$x,y\in\RR^n$, let
	\[
		\bigl(\tau_{x} f\bigr) (y) := f(y-x)
		\quad\;\text{ and }\;\quad
		\tilde{f}(y) := f(-y).
	\]
For~$f\in \mathrm{L}^1 (\RR^n)$, its \textit{Fourier} and \textit{inverse Fourier Transform} are given by
	\begin{align}\label[definition]{eq:FT}
		\hat{f} (x) = \int\limits_{\RR^n} \mathsf{e}^{-2\pi\mathbf{i} x\cdot y} f(y) \,\mathrm{d}y 
		\;\text{ and }\;
		\check{f} (x) = \int\limits_{\RR^n}  \mathsf{e}^{2\pi\mathbf{i} x\cdot y} f(y) \,\mathrm{d}y .
	\end{align}
\end{notation}

If~$f, g\in\mathrm{L}^1$, then
	\begin{align}\label{eq:hatcanmove}
		\int\limits_{\RR^n} \hat{f}(x)g(x)\,\mathrm{d}x 
		=
		\int\limits_{\RR^n} {f}(x)\hat{g}(x)\,\mathrm{d}x .
	\end{align}
As a consequence, one can show that if~$f, \hat{f}$ are both~$\mathrm{L}^1$, then the \textit{inversion formula} holds: for almost every~$x\in \RR^n$, we have
	\[
		f (x) = \check{\hat{f}} (x) =
		\int\limits_{\RR^n} 
		\mathsf{e}^{2\pi\mathbf{i} x\cdot y}
		\hat{f}(y)
		\,\mathrm{d}y .
	\]

\begin{defn}
	The \textit{Schwartz space}~$\Schw$ consists of those smooth functions on~$\RR^n$ which have rapidly decaying derivatives. To be more precise, define for~$N\in\NN$ and a multi-index~$\alpha$,
	\begin{align}\label[definition]{def:SchwNorms}
		\norm{\phi}_{N,\alpha} 
		:=
		\sup_{x\in\RR^n}{(1+\norm{x})^N \abs{\partial^\alpha \phi(x)}}.
	\end{align}
	Then
	\[
		\Schw
		:=
		\left\{
			\phi\in C^\infty(\RR^n)
			\,\vert\,
			\text{for any } N\in\NN, \alpha \text{ multi-index}:
			\norm{\phi}_{N,\alpha} <\infty
		\right\}.
	\]
\end{defn}
When equipped with the seminorms given in \Cref{def:SchwNorms},~$\Schw$ becomes a Fr{\'e}chet space, cf.~\cite[8.2.~Proposition]{Folland:RA}. The Fourier Transform then maps~$\Schw$ continuously into itself and, because of the inversion formula, is hence an isomorphism of~$\Schw$ (cf.~\cite[8.28~Cor.]{Folland:RA}).
	
\begin{defn}
	A \textit{distribution}~$F$ is a functional on~$C_{c}^\infty (\RR^n)$. We will denote by $\iinner{F}{\phi}$ the value of~$F$ at the point \mbox{$\phi\in C_{c}^\infty (\RR^n)$,} and let~$\mathcal{D}'$ be the space of distributions. The \textit{support} of~$F$ is the complement of the maximal open subset~$U\subseteq\RR^n$ for which
	\begin{equation*}
		\iinner{F}{\phi}
		= 0
	\end{equation*}
	for all $\phi$ such that $\mathsf{supp}(\phi)\subseteq U$.
	A distribution~$F$ is \textit{tempered} if it extends continuously to all of~$\Schw$. As~$C_{c}^\infty (\RR^n)$ is dense in~$\Schw$ (cf.~\cite[9.9~Prop.]{Folland:RA}), the space of tempered distributions is the dual space~$\Schw'$ of~$\Schw$.
\end{defn}

\begin{example}\label{ex:locintgivesdist}
	If~$f\colon \RR^n \to \mathbb{C}$ is locally integrable (that is, integrable on compact sets), then it defines a distribution by
	\begin{align*}
		\iinner{f}{\phi}
		:=
		\int\limits_{\RR} f(x)\phi(x)\,\mathrm{d}x
	\end{align*}
	for $\phi\in C_{c}^\infty (\RR^n)$.
\end{example}

If~$\psi\in C_{c}^\infty (\RR^n)$, then
\[
	\int\limits_{\RR} \bigl(f\ast \psi\bigr)(x)\phi(x)\,\mathrm{d}x 
	=
	\int\limits_{\RR}\!\!
		\int\limits_{\RR}
			f(x) \psi(y-x)
			\phi(x)
		\,\mathrm{d}y 
	\,\mathrm{d}x 
	=
	\int\limits_{\RR} f(x)\bigl(\phi\ast \tilde{\psi}\bigr)(x)\,\mathrm{d}x ,
\]
and so the above \nameCref{ex:locintgivesdist} justifies the following \namecref{def:convForDist}:

\begin{defn}[{\cite[p.~285]{Folland:RA}}]\label{def:convForDist}
	If~$F\in\D'$ and~$\psi\in C_{c}^\infty (\RR^n)$, we define for $\phi\in C_{c}^\infty (\RR^n)$,
	\begin{align*}
		\iinner{F\ast\psi}{\phi}
		:=
		\iinner{F}{\phi\ast\tilde{\psi}}.
	\end{align*}
	One can show (see~\cite[9.3~Prop.]{Folland:RA}) that this distribution is actually given by integration against the function $F\ast\psi$ defined by
	\[
		F\ast\psi (x) := \iinner{F}{\tau_{x} \tilde{\psi}}.
	\]
\end{defn}

\begin{lem}\label{lem:ConvCmpctSupp}
	If~$F\in\D'$ has compact support and if~$\psi\in C_{c}^\infty (\RR^n)$, the function~$F\ast \psi$ is a smooth compactly supported function.
\end{lem}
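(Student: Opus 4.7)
The plan is to prove the two claims separately—first that $F\ast\psi$ has compact support, then that it is smooth.

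For the support statement, I would set $K:=\mathsf{supp}(F)$ and $L:=\mathsf{supp}(\psi)$, both compact. The function $y\mapsto \tau_x\tilde\psi(y)=\psi(x-y)$ is supported in the compact set $x-L$. If $x\notin K+L$, then $(x-L)\cap K=\emptyset$, so $\mathsf{supp}(\tau_x\tilde\psi)$ lies in the complement of $K$, which by definition is the maximal open set on which $F$ vanishes on test functions; hence $(F\ast\psi)(x)=\iinner{F}{\tau_x\tilde\psi}=0$. This gives $\mathsf{supp}(F\ast\psi)\subseteq K+L$, and the latter is compact.

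For smoothness, I would prove by induction on $\abs\alpha$ that $F\ast\psi$ has continuous partial derivatives of all orders, with $\partial^\alpha(F\ast\psi)=F\ast(\partial^\alpha\psi)$. Since $\partial^\alpha\psi\in C_{c}^\infty(\RR^n)$ as well, the inductive step reduces to showing that for any $\phi\in C_{c}^\infty(\RR^n)$, the function $F\ast\phi$ is continuously differentiable with $\partial_j(F\ast\phi)=F\ast(\partial_j\phi)$. Fix $x_0\in\RR^n$, a coordinate direction $e_j$, and $h\in\RR$ with $\abs h\leq 1$. The difference quotient
\[
Q_h(y):=\frac{\phi(x_0+he_j-y)-\phi(x_0-y)}{h}
\]
converges, by Taylor's theorem, uniformly in $y$ together with all its $y$-derivatives to $(\partial_j\phi)(x_0-y)=\tau_{x_0}\widetilde{\partial_j\phi}(y)$, and for $\abs h\leq 1$ the $Q_h$ are all supported in a fixed compact neighbourhood of $x_0-\mathsf{supp}(\phi)$. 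Thus $Q_h\to\tau_{x_0}\widetilde{\partial_j\phi}$ in the test function topology on $C_{c}^\infty(\RR^n)$, and continuity of the distribution $F$ yields
\[
\partial_j(F\ast\phi)(x_0)=\lim_{h\to 0}\iinner{F}{Q_h}=\iinner{F}{\tau_{x_0}\widetilde{\partial_j\phi}}=(F\ast\partial_j\phi)(x_0).
\]
Continuity of $x\mapsto (F\ast\partial_j\phi)(x)=\iinner{F}{\tau_x\widetilde{\partial_j\phi}}$ follows from the same type of continuity argument applied to the translation map $x\mapsto \tau_x\widetilde{\partial_j\phi}$ in $C_{c}^\infty(\RR^n)$. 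Iterating gives $F\ast\psi\in C^\infty$.

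The main obstacle is the interchange of $F$ with the limit defining the derivative. The paper introduces a distribution merely as a functional on $C_{c}^\infty$, without explicitly imposing continuity, but the argument requires the standard (implicitly understood) continuity of $F$ in the test-function topology; without it the limit interchange is unjustified. Once continuity is taken for granted, the proof hinges on the routine but essential observation that the difference quotients $Q_h$ all live in a common compact support and converge together with every derivative, so that $F$ can legitimately pass through the limit.
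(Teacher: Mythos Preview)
Your proof is correct and, for the compact-support part, essentially identical to the paper's: both show that $x\notin\mathsf{supp}(F)+\mathsf{supp}(\psi)$ forces $\tau_x\tilde\psi$ to be supported off $\mathsf{supp}(F)$, hence $\iinner{F}{\tau_x\tilde\psi}=0$. For smoothness the paper simply cites \cite[9.3a)~Prop.]{Folland:RA}, whereas you spell out the standard difference-quotient argument that Folland gives there; so your write-up is more self-contained but not a different route. Your remark that the paper's definition of a distribution omits the continuity hypothesis is well taken---the argument (and Folland's) does need it, and it is clearly intended.
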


\begin{proof} 
	Regarding smoothness, see~\cite[9.3a)~Prop.]{Folland:RA}. If we let~$A$ be the closure of
	$
		\mathsf{supp}(F)+\mathsf{supp}(\psi),
	$
	then~$A$ is compact by assumption. For~$x\notin A$, the function
	\[
		\tau_{x} \tilde{\psi}:\quad y \mapsto		{\psi} (x-y)
	\]
	is supported outside of~$\mathsf{supp}(F)$, so that
	\begin{align*}
		F\ast\psi (x)
		=
		\iinner{F}{\tau_{x} \tilde{\psi}}
		=
		0.\qedhere
	\end{align*}
\end{proof}

\begin{example}[special case of \Cref{ex:locintgivesdist}]\label{bddGivesTempDist}
	If~$f: \RR\to\mathbb{C}$ is measurable and bounded, then it defines a \emph{tempered} distribution by
	\begin{align*}
	 	\iinner{f}{\phi}
	 	:=
	 	\int\limits_{\RR} f(x)\phi(x)\,\mathrm{d}x
	\end{align*}
	for $\phi\in\Schw$.
	Indeed, since
	\[
		\sup_{x\in\RR}{(1+\norm{x})^2 \abs{\phi(x)}}
		=
		\norm{\phi}_{2,0}
		<\infty,
	\]
	we have
	\begin{align*}
		\int\limits_{\RR}
			\abs{
				f(x)\phi(x)
			}
		\,\mathrm{d}x 
		\leq
		\int\limits_{\RR}
			\supnorm{f}
			\frac{\norm{\phi}_{2,0}}{(1+\abs{x})^2}
		\,\mathrm{d}x 
		\leq
		\int\limits_{\RR}
			\frac{\supnorm{f}\norm{\phi}_{2,0}}{1+\abs{x}^2}
		\,\mathrm{d}x 
		=
		\supnorm{f}\norm{\phi}_{2,0}\pi < \infty,
	\end{align*}
	so the measurable function~$f\phi$ is integrable, and~$\iinner{f}{\phi}$ is well\-/defined and continuous.
\end{example}
The advantage of tempered distributions over other distributions is the following definition:
\begin{defn}
	If~$F$ is a tempered distribution, we define its \textit{Fourier} and \textit{inverse Fourier Transform} by
	\begin{align*}
		\iinner{\hat{F}}{\phi} := \iinner{{F}}{\hat{\phi}},
		\;\text{ and }\;
		\iinner{\check{F}}{\phi} := \iinner{{F}}{\check{\phi}}
	\end{align*}
	for $\phi\in\Schw.$
	Because of \Cref{eq:hatcanmove}, we see that, if~$F$ is integration against an~$\mathrm{L}^1$\-/function, then both~$\hat{F}$ and~$\check{F}$ agree with the definition given in \Cref{eq:FT}. Moreover, we again have the inversion formula~$\hat{\check{F}}=\check{\hat{F}}=F$.
\end{defn}

\begin{lem}\label{lem:pv-gives-temp-dist}
	Suppose we are given an even, integrable function~$h\colon \RR\to\mathbb{C}$. Then the assignment
	\begin{align*}
		\pv{\frac{h(t)}{t}}
		\colon 
		C_{c}^\infty (\RR)
		&\longrightarrow
		\mathbb{C},\\
		\varphi &\longmapsto
		\lim_{\epsilon \to 0^+}
		\left(
			\int\limits_{-\infty}^{-\epsilon}
				\frac{h(t)}{t} \varphi(t)
			\,\mathrm{d}t 
			+		
			\int\limits_{\epsilon}^{\infty}
				\frac{h(t)}{t} \varphi(t)
			\,\mathrm{d}t 
			\right),
	\end{align*}
	extends continuously to~$\Schw$. Furthermore, the Fourier Transform of this tempered distribution is given by integration against the \textup{(}well\-/defined\textup{)} function
	\[
		\zeta (x)
		:=
		\int\limits_{-\infty}^{\infty}
			\frac{\sin (tx)}{\mathbf{i} t} h\left(\frac{t}{2\pi}\right)
		\,\mathrm{d}t .
	\]
\end{lem}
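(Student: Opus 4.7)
The plan is to first verify that $\pv{\tfrac{h(t)}{t}}$ extends continuously to~$\Schw$, then establish a uniform bound on natural truncations of~$\zeta$, and finally compute the Fourier Transform by Fubini and dominated convergence. Since $h$ is even, $h(t)/t$ is odd, so the symmetric truncation in the definition of $\pv{\tfrac{h(t)}{t}}$ rewrites as~\mbox{$\int_{\epsilon}^\infty \tfrac{h(t)}{t}\bigl[\varphi(t)-\varphi(-t)\bigr]\,\mathrm{d}t $,} and by the Mean Value Theorem~\mbox{$\abs{\varphi(t)-\varphi(-t)}\leq 2t\supnorm{\varphi'}$}. Thus the integrand is dominated by~$2\abs{h(t)}\supnorm{\varphi'}$ on~$(0,1]$ and by~$2\abs{h(t)}\supnorm{\varphi}$ on~$[1,\infty)$; as~$h$ is integrable, dominated convergence gives both the existence of the \mbox{$\epsilon\to 0^+$}\=/limit and a bound on the resulting functional by a constant multiple of~\mbox{$\norm{\varphi}_{0,0}+\norm{\varphi}_{0,1}$,} proving continuity on~$\Schw$.

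Using the same split at $\abs{s}=1$ together with the pointwise estimate~\mbox{$\bigl|\sin(sx)/(\mathbf{i} s)\bigr|\leq \min(\abs{x},1/\abs{s})$,} I would show that for every $\delta \in [0,\infty)$ the truncation
\[
\zeta_{\delta}(x):=\int_{\abs{s}\geq \delta} \frac{\sin(sx)}{\mathbf{i} s}\, h\bigl(\tfrac{s}{2\pi}\bigr)\,\mathrm{d}s
\]
is well-defined and satisfies~$\abs{\zeta_{\delta}(x)}\leq C_{1}\abs{x}+C_{2}$ for constants depending only on~$\norm{h}_{1}$. In particular,~$\zeta=\zeta_{0}$ is well-defined and of at most linear growth -- hence a tempered distribution by an argument analogous to \Cref{bddGivesTempDist} -- and~$\zeta_{\delta}\to \zeta$ pointwise as~$\delta\to 0^+$.

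For the main computation, I would begin from
\[
\iinner{\widehat{F}}{\phi}
=\iinner{F}{\widehat{\phi}}
=\lim_{\epsilon\to 0^+} \int_{\abs{t}\geq \epsilon} \frac{h(t)}{t}\widehat{\phi}(t)\,\mathrm{d}t ,
\]
unfold~$\widehat{\phi}$ via \Cref{eq:FT}, and apply Fubini on~\mbox{$\{\abs{t}\geq \epsilon\}\times \RR$} -- valid because the integrand is bounded in absolute value by~\mbox{$(1/\epsilon)\abs{h(t)}\abs{\phi(x)}\in\mathrm{L}^1$}. Using the oddness of~$h(t)/t$ to suppress the cosine half of~$\mathsf{e}^{-2\pi\mathbf{i} tx}$, then substituting~$s=2\pi t$ and folding back over~$\{\abs{s}\geq 2\pi\epsilon\}$ by the evenness of the resulting integrand in~$s$, the inner integral in~$t$ collapses to precisely~$\zeta_{2\pi\epsilon}(x)$. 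The remaining and principal obstacle is exchanging~$\lim_{\epsilon\to 0^+}$ with the~$x$\=/integral; this is where the uniform bound~$\abs{\zeta_{\delta}(x)}\leq C_{1}\abs{x}+C_{2}$ pays off, since combined with the rapid decay of~$\phi\in\Schw$ it furnishes an~$\mathrm{L}^1$\-/dominant for~$\phi(x)\zeta_{2\pi\epsilon}(x)$, whence dominated convergence yields~\mbox{$\iinner{\widehat{F}}{\phi}=\int_\RR \phi(x)\zeta(x)\,\mathrm{d}x $} as claimed.
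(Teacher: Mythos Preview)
Your argument is correct. The first part (continuity on~$\Schw$) is a minor variant of the paper's: you pair~$\varphi(t)$ with~$\varphi(-t)$ and split at~$\abs{t}=1$, whereas the paper subtracts~$\varphi(0)$ and obtains the single\-/seminorm bound~$\norm{h}_{\mathrm{L}^1}\norm{\varphi}_{0,1}$ directly. Both rest on the evenness of~$h$ and the Mean Value Theorem.

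For the Fourier computation the two proofs run in opposite directions. The paper starts from~$\zeta$ and checks that~$\check{\zeta}$ agrees with the principal value: it applies the Dominated Convergence Theorem (using~$\abs{\sin(tx)/t}\leq\abs{x}$ and the integrability of~$x\check{\varphi}$) to reinsert an~$\epsilon$\=/truncation into the defining integral for~$\zeta$, then uses evenness of~$h$ to add back the vanishing cosine contribution and reconstitute the exponential, swaps order by Fubini, and invokes the inversion formula on~$\varphi$. You instead start from~$F$ and compute~$\hat{F}$ directly: unfold~$\hat{\phi}$, Fubini on the truncated strip, suppress the cosine by the oddness of~$h(t)/t$, and substitute to recognise the truncated family~$\zeta_{2\pi\epsilon}$. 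Your introduction of~$\zeta_\delta$ with the uniform linear\-/growth bound is a clean bookkeeping device that packages the final DCT step neatly; the paper achieves the same exchange of limit and integral but with the domination phrased in terms of~$x\check{\varphi}$ rather than via an auxiliary family. The two routes use the same ingredients (parity, Fubini, DCT) and are of comparable length; yours has the slight advantage of computing~$\hat{F}$ head\=/on rather than going through~$\check{\zeta}$, while the paper's subtraction of~$\varphi(0)$ gives a marginally sharper continuity estimate. One cosmetic remark: after killing the cosine on the symmetric domain~$\{\abs{t}\geq\epsilon\}$ and substituting, you are already integrating over~$\{\abs{s}\geq 2\pi\epsilon\}$, so no ``folding back'' step is actually needed.
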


\begin{proof}
	Let~$\varphi\in C_{c}^\infty (\RR)$. For any~$\epsilon >0$, the following two integrals exist since~$h$ is integrable, and are equal because~$h$ is even:
	\[
		\int\limits_{-\infty}^{-\epsilon}
			\frac{h(t)}{t}\varphi(0)
		\,\mathrm{d}t 
		=
		-
		\int\limits_{\epsilon}^{\infty}
			\frac{h(t)}{t}\varphi(0)
		\,\mathrm{d}t .
	\]
	Therefore, we may rewrite
	\begin{align*}
		\iinner{\pv{\frac{h(t)}{t} }}{\varphi}
		&=
		\lim_{\epsilon \to 0^+}
		\left(
			\int\limits_{-\infty}^{-\epsilon}
				\frac{\varphi(t)-\varphi(0)}{t} h(t)
			\,\mathrm{d}t 
			+		
			\int\limits_{\epsilon}^{\infty}
				\frac{\varphi(t)-\varphi(0)}{t} h(t)
			\,\mathrm{d}t 
		\right)\\
		&=
		\int\limits_{-\infty}^{\infty}
			\frac{\varphi(t)-\varphi(0)}{t} h(t)
		\,\mathrm{d}t ,
	\end{align*}
	where the last line holds because~$t\mapsto\frac{\varphi(t)-\varphi(0)}{t}$ can be smoothly extended at~$0$ by the value~$\varphi'(0)$ by L'H{\^o}pital. We therefore get
	\begin{align*}
		\abs{
			\iinner{\pv{\frac{h(t)}{t} }}{\varphi}
		}
		&\leq
		\int\limits_{-\infty}^{\infty}
			\abs{\frac{\varphi(t)-\varphi(0)}{t} h(t)}
		\,\mathrm{d}t 
		\leq
		\norm{h}_{\mathrm{L}^1}\cdot
			\sup_{t\in\RR}{\abs{\frac{\varphi(t)-\varphi(0)}{t}}}
		\\
		&\leq
		\norm{h}_{\mathrm{L}^1}\cdot
		\sup_{t\in\RR}{\abs{\varphi'(t)}}
	\end{align*}
	by the Mean Value Theorem.
	In particular, the value is finite for~$\varphi\in C_{c}^\infty (\RR)$ and in fact also for~$\varphi\in\Schw$. Moreover, given~$\varphi_{k}\in \Schw$ converging to~$0$, the above line means that
	\begin{align*}
		\abs{
					\iinner{\pv{\frac{h(t)}{t} }}{\varphi_{k}}
		}
		&\leq
		\norm{h}_{\mathrm{L}^1}\cdot
		\norm{
			\varphi_{k}
		}_{0,1} \stackrel{k\to\infty}{\longrightarrow} 0,
	\end{align*}
	so we have shown that our functional extends \emph{continuously} to~$\Schw$.
	
	Regarding~$\zeta$, first note that the (scaled) sinc function~$\RR^{\times}\ni t\mapsto \frac{\sin(tx)}{t}$ can be continuously extended at~$0$ by assigning it the value~$x$, and that it is bounded by~$\abs{x}$. Hence, since~$h$ is integrable, we see that~$\zeta(x)$ is actually a finite number,  so~$\zeta$ is well\-/defined. To check that~$\zeta$ is the Fourier transform, we can equivalently show that~$\check{\zeta} =	\pv{\frac{h(t)}{t} }$, so consider
	\begin{align*}
		\iinner{\check{\zeta}}{\varphi}
		=
		\iinner{\zeta}{\check{\varphi}}
		=
		\int\limits_{-\infty}^{\infty}
		 \zeta(x)
			\check{\varphi}(x)	
		\,\mathrm{d}x 
		=
		\int\limits_{-\infty}^{\infty}
		\left(
			\int\limits_{-\infty}^{\infty}
				\frac{\sin (tx)}{\mathbf{i} t} h\left(\frac{t}{2\pi}\right)
			\,\mathrm{d}t 
		\right)
		\check{\varphi}(x)
		\,\mathrm{d}x .
	\end{align*}
	As mentioned above,~$\abs{\frac{  \sin(tx)}{  \mathbf{i} t}} \leq \abs{x}$, so since~$h$ and~$x\check{\varphi}$ are integrable (the latter because~$\varphi\in\Schw$), we can use the Dominated Convergence Theorem to get
	\begin{align*}
		\iinner{\check{\zeta}}{\varphi}
		=
		\lim_{\epsilon \to 0^+}
			\int\limits_{-\infty}^{\infty}
			\left(
				\int\limits_{-\infty}^{-\epsilon}
					\frac{\sin (tx)}{\mathbf{i} t} h\left(\frac{t}{2\pi}\right)
				\,\mathrm{d}t 
				+
				\int\limits_{\epsilon}^{\infty}
					\frac{\sin (tx)}{\mathbf{i} t} h\left(\frac{t}{2\pi}\right)
				\,\mathrm{d}t 
			\right)
			\check{\varphi}(x)
			\,\mathrm{d}x 
		.
	\end{align*}
	Now again, for any~$\epsilon >0$, the following two integrals exist since~$h$ is integrable, and are equal because~$h$ is even:
	\[
		\int\limits_{-\infty}^{-\epsilon}
		\frac{\cos(xt)}{t} h\left(\frac{t}{2\pi}\right)
		\,\mathrm{d}t 
		=
		-
		\int\limits_{\epsilon}^{\infty}
		\frac{\cos(xt)}{t} h\left(\frac{t}{2\pi}\right)
		\,\mathrm{d}t .
	\]
	Therefore, with the previous computation, 
	\begin{align*}
	\iinner{\check{\zeta}}{\varphi}
	&=
	\lim_{\epsilon \to 0^+}
	\int\limits_{-\infty}^{\infty}
		\left(
			\int\limits_{-\infty}^{-\epsilon}
				\frac{-\mathsf{e}^{\mathbf{i} tx}}{t} h\left(\frac{t}{2\pi}\right)
			\,\mathrm{d}t 
			+
			\int\limits_{\epsilon}^{\infty}
				\frac{-\mathsf{e}^{\mathbf{i} tx}}{t} h\left(\frac{t}{2\pi}\right)
			\,\mathrm{d}t 
		\right)
	\check{\varphi}(x)
	\,\mathrm{d}x 
	\\
	&=
		\lim_{\epsilon \to 0^+}
			\int\limits_{-\infty}^{\infty}
				\left(
					\int\limits_{\epsilon}^{\infty}
						\mathsf{e}^{-2\pi\mathbf{i} tx}\frac{ h(t)}{t}
					\,\mathrm{d}t 
					+
					\int\limits_{-\infty}^{-\epsilon}
						\mathsf{e}^{-2\pi\mathbf{i} tx}\frac{ h(t)}{t}
					\,\mathrm{d}t 
				\right)
				\check{\varphi}(x)
			\,\mathrm{d}x .
	\end{align*}
	A standard use of Tonelli's and Fubini's Theorem shows that we can interchange the order of integration, so that
	\begin{align*}
		\iinner{\check{\zeta}}{\varphi}
		=
		\lim_{\epsilon \to 0^+}
		\left[
			\int\limits_{-\infty}^{-\epsilon}
				\left(
					\int\limits_{-\infty}^{\infty}
						\mathsf{e}^{-2\pi\mathbf{i} tx}
						\check{\varphi}(x)
					\,\mathrm{d}x 
				\right)
				\frac{ h(t)}{t}
			\,\mathrm{d}t 
			+
			\int\limits_{\epsilon}^{\infty}
				\left(
					\int\limits_{-\infty}^{\infty}
						\mathsf{e}^{-2\pi\mathbf{i} tx}
						\check{\varphi}(x)
					\,\mathrm{d}x 
				\right)
				\frac{ h(t)}{t}
				\,\mathrm{d}t 			
		\right].
	\end{align*}
	Since~$\varphi\in \Schw$, we know that the inversion formula holds: for almost every $t$, we have
	\[
		\varphi (t) = \hat{\check{\varphi}} (t) =
		\int\limits_{-\infty}^{\infty}
			\mathsf{e}^{-2\pi\mathbf{i} xt}
			\check{\varphi}(x)
		\,\mathrm{d}x.
	\]
	Therefore,
	\begin{equation*}
		\iinner{\check{\zeta}}{\varphi}
		=
		\lim_{\epsilon \to 0^+}
		\left[
		\int\limits_{-\infty}^{-\epsilon}
		\varphi (t)
		\frac{ h(t)}{t}
		\,\mathrm{d}t 
		+
		\int\limits_{\epsilon}^{\infty}
		\varphi (t)
		\frac{ h(t)}{t}
		\,\mathrm{d}t 			
		\right]
		=
		\iinner{\pv{\frac{h(t)}{t} }}{\varphi}.\qedhere
	\end{equation*}
\end{proof}

\begin{defn}\label{def:normalizing}
	A smooth function~$\chi\colon  \RR\to [-1,1]$ is a \textit{normalizing function} if
	\begin{enumerate}
		\item\label[property]{normalizing:odd} $\chi$ is odd,
		\item\label[property]{normalizing:positive} for $x>0$ we have $\chi(x) >0$, and
		\item\label[property]{normalizing:asymptotes} for $x\to \pm\infty$, we have $\chi(x) \to \pm 1$.
	\end{enumerate}
\end{defn}
\begin{lem}
	\label{ChiExists}
	For every~$\epsilon >0$, there exists a normalizing function~$\chi$ whose \textup{(}distributional\textup{)} Fourier transform is supported in~$(-\epsilon,  \epsilon)$.
\end{lem}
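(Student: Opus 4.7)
The plan is to use \Cref{lem:pv-gives-temp-dist} with a carefully chosen~$h$ and build~$\chi$ as a real multiple of the associated function~$\zeta$. If $h\in C_{c}^{\infty}(\RR)$ is even and supported in $(-\epsilon,\epsilon)$, then the tempered distribution $\pv{\frac{h(t)}{t}}$ also has support in $(-\epsilon,\epsilon)$, and applying Fourier inversion to the identity $\widehat{\pv{\frac{h(t)}{t}}} = \zeta$ shows that $\widehat{\zeta}$ equals (the reflection of) this very distribution, hence has support in $(-\epsilon,\epsilon)$. Because the integrand $\sin(tx)/(\mathbf{i} t)\cdot h(t/(2\pi))$ is purely imaginary, $\zeta$ takes purely imaginary values, so I would set $\chi := \mathbf{i}\zeta$, a real\-/valued smooth function whose distributional Fourier transform has support in $(-\epsilon,\epsilon)$.

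Next, to see that $\chi$ is actually a normalizing function, I would substitute $s=t/(2\pi)$ in the integral formula for~$\zeta$ and apply Fubini together with the elementary identity $\int_{-x}^{x}\cos(2\pi sy)\,dy = \sin(2\pi sx)/(\pi s)$ to rewrite
\[
	\chi(x)
	= \int_{-\infty}^{\infty}\frac{\sin(2\pi sx)}{s}\,h(s)\,ds
	= \pi\int_{-x}^{x}\widehat{h}(y)\,dy.
\]
Smoothness and oddness of~$\chi$ are then immediate, and $\chi'(x) = 2\pi\widehat{h}(x)$ follows by differentiating under the integral. The three requirements of \Cref{def:normalizing} therefore reduce to the single demand that $\widehat{h}\geq 0$ together with $\int_{\RR}\widehat{h} = 1/\pi$: monotonicity combined with $\chi(0)=0$ and $\chi(+\infty) = \pi\int_{\RR}\widehat{h} = 1$ yields both $\abs{\chi}\leq 1$ and the correct asymptotes, while strict positivity on $(0,\infty)$ follows because $\widehat{h}$ is real\-/analytic by Paley\-/Wiener and not identically zero.

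The main obstacle is therefore producing such an~$h$. I would take $h := \psi\ast\psi$ where $\psi\in C_{c}^{\infty}(\RR)$ is real, even, nonnegative, supported in $(-\epsilon/2,\epsilon/2)$, and normalized so that $\norm{\psi}_{2}^{2} = 1/\pi$. Then~$h$ is even, smooth, and supported in $\mathsf{supp}(\psi)+\mathsf{supp}(\psi)\subseteq(-\epsilon,\epsilon)$; the convolution theorem yields $\widehat{h} = (\widehat{\psi})^{2}\geq 0$; and Plancherel gives $\int_{\RR}\widehat{h} = \norm{\widehat{\psi}}_{2}^{2} = \norm{\psi}_{2}^{2} = 1/\pi$. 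The remaining subtlety is bookkeeping the various factors of~$2\pi$ that the Fourier convention of \Cref{eq:FT} introduces, but once these are tracked the pieces assemble directly into a normalizing function with the required support condition.
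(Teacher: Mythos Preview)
Your proposal is correct and follows essentially the same route as the paper: both take $h$ (the paper's~$f$) to be the convolution square of a real even bump function, define~$\chi$ via the sinc integral coming from \Cref{lem:pv-gives-temp-dist}, rewrite it as $\pi$ times the integral of $\hat h=(\widehat{\psi})^{2}\geq 0$ to verify the normalizing properties, and read off the support of~$\hat\chi$ from that of the principal-value distribution. The only cosmetic differences are that you build the support constraint into~$\psi$ from the outset while the paper first produces some compactly supported~$\hat\chi$ and rescales at the end, and that you obtain the limit $\chi(+\infty)=1$ by monotonicity and Plancherel rather than by the Dominated Convergence Theorem; your normalization $\lVert\psi\rVert_{2}^{2}=1/\pi$ is exactly the paper's condition $g\ast g(0)=1/\pi$ once one notes $g\ast g(0)=\lVert g\rVert_{2}^{2}$ for even~$g$.
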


\begin{proof}\label{pf:ChiExists}
	We will follow the instructions in \cite[Exercise~10.9.3]{HigRoe:KHom}.
	
	Fix an even function~$g\in C_{c}^{\infty} (\RR,\RR)$ such that~$g\ast g(0) = \frac{1}{\pi}$. One could, for example, take a rescaled version of the function
	\[
		t\mapsto
		\left\{
		\begin{array}{ll}
		\exp(-\frac{1}{1-t^2}) & \mbox{if } \abs{t}<1, \\
		0 & \mbox{otherwise.}
		\end{array}
		\right.
	\]
	Let~$f:= g\ast g$, and define
	\[
	\chi(x)
	:=
	\int\limits_{-\infty}^{\infty} \frac{\sin(xt)}{t}f(t) \,\mathrm{d}t ,
	\]
	which is well-defined (see proof of \Cref{lem:pv-gives-temp-dist}), odd, and smooth.
	
	Now, recall that for any~$a>0$, the sinc function is the Fourier transform of a scaled characteristic function, namely
	\[
		\frac{\sin(2\pi at)}{\pi t} = \hat{\mathbb{1}}_{[-a,a]}(t).
	\]
	Using \cite[Lemma~8.25]{Folland:RA}, we can thus rewrite~$\chi$ for positive~$x$ as follows:
	\begin{align*}
		\chi(x)
		&=
		\pi \int\limits_{-\infty}^{\infty} \hat{\mathbb{1}}_{[-\frac{x}{2\pi},\frac{x}{2\pi}]}(t)f(t) \,\mathrm{d}t 
		=
		\pi \int\limits_{-\infty}^{\infty} {\mathbb{1}}_{[-\frac{x}{2\pi},\frac{x}{2\pi}]}(t)\hat{f}(t) \,\mathrm{d}t 
		\\
		&=
		\pi \int\limits_{-\frac{x}{2\pi}}^{\frac{x}{2\pi}}\hat{f}(t) \,\mathrm{d}t 
		=
		\pi \int\limits_{-\frac{x}{2\pi}}^{\frac{x}{2\pi}}\hat{g}(t)^2 \,\mathrm{d}t ,
	\end{align*}
	from which we see that~$\chi(x)\geq 0$. Moreover, since~$g$ is non\-/zero and smooth with compact support,~$\hat{g}$ does not vanish on any interval (cf.\ \cite[p.~293]{Folland:RA}). The above equality hence gives~$\chi(x) > 0$ for~$x>0$, so~$\chi$ satisfies \Cref{normalizing:positive} of normalizing functions. Furthermore,
	\begin{align*}
		\pi \int\limits_{-\frac{x}{2\pi}}^{\frac{x}{2\pi}}\hat{g}(t)^2 \,\mathrm{d}t 
		\leq
		\pi \int\limits_{-\infty}^{\infty}\hat{g}(t)^2 \,\mathrm{d}t 
		=
		\pi \norm{\hat{g}}_{2}^2
		\stackrel{(\ast)}{=}
		\pi \norm{{g}}_{2}^2
		=
		\pi f(0)
		=1,
	\end{align*}
	where~$(\ast)$ holds because of the Plancherel Theorem (see \cite[8.29]{Folland:RA}), so we have shown that~$\chi$ is indeed~$[-1,1]$\=/valued. Next, the Dominated Convergence Theorem allows us to compute
	\[
	\lim_{x\to \infty} \chi(x)
	=
	\lim_{x\to \infty}
	\int\limits_{}^{} \frac{\sin(t)}{t}f(\frac{t}{x}) \,\mathrm{d}t
	\stackrel{\text{DCT}}{=}
	\int\limits_{}^{} \frac{\sin(t)}{t}f(0) \,\mathrm{d}t 
	=1,	
	\]
	so we have shown \Cref{normalizing:asymptotes} of normalizing functions.

	From \Cref{lem:pv-gives-temp-dist}, we see that
	\[
		\check{\chi} = \pv{\frac{f(2\pi t)}{\mathbf{i} t}},
	\]
	so
	\[
		\iinner{\hat{\chi}}{\varphi}
		=
		\iinner{\check{\chi}}{\tilde{\varphi}}
		=
		\lim_{\epsilon \to 0^+}
		\left(
			\int\limits_{-\infty}^{-\epsilon}
				\frac{f(2\pi t)}{\mathbf{i} t} \varphi(-t)
			\,\mathrm{d}t 
			+		
			\int\limits_{\epsilon}^{\infty}
				\frac{f(2\pi t)}{\mathbf{i} t} \varphi(-t)
			\,\mathrm{d}t 
		\right).
	\]
	Thus, if~$\varphi$ has support disjoint from the support of~$t\mapsto f(- 2\pi t)=f( 2\pi t)$, then \mbox{$\iinner{\hat{\chi}}{\varphi}=0$.} In other words, the support of~$\hat{\chi}$ is contained in~$\frac{1}{2\pi}\mathsf{supp}(f)$, which is compact.
	
	Lastly, out of~$\chi$ with Fourier Transform supported in, say,~$(-b,b)$, we want to construct another normalizing function whose Fourier transform is supported in~$(-\epsilon, \epsilon)$. Let~$T(x) := \frac{\epsilon \, x}{b}$ and~$\chi_{2} := \chi \circ T$. As~$\epsilon,  a$ are positive, this is again a normalizing function, and we compute for~$\varphi\in\Schw$,
	\begin{align*}
		\iinner{\hat{\chi}_{2}}{\varphi}
		=
		\iinner{\chi_{2}}{\hat{{\varphi}}}
		&=
		\int\limits_{-\infty}^{\infty} \chi_{2}(x) \hat{\varphi}(x) \,\mathrm{d}x 
		=
		\int\limits_{-\infty}^{\infty} (\chi\circ T)(x) \hat{\varphi}(x) \,\mathrm{d}x \\
		&
		=
		\int\limits_{-\infty}^{\infty} \chi(x) (\hat{\varphi}\circ T^\mi) (x) (T^\mi)'(x) \,\mathrm{d}x .
	\end{align*}
	From \cite[Thm.~8.2b)]{Folland:RA} we know that
	\[
	(\hat{\varphi}\circ T^\mi) \cdot (T^\mi)'
	=
	(\varphi\circ T)\hat{}\,.
	\]
	If~$\varphi$ is now supported outside of~$(-\epsilon,  \epsilon)$, so that~$\varphi\circ T$ is supported outside of~$(-b,b)$, then the above computations yield
	\begin{align*}
		\iinner{\hat{\chi}_{2}}{\varphi}
		&=
		\int\limits_{-\infty}^{\infty} \chi(x) (\varphi\circ T)\hat{}(x)  \,\mathrm{d}x 
		=
		\iinner{\hat{\chi}}{\varphi\circ T}
		=0.
	\end{align*}
	This proves that~$\hat{\chi}_{2}$ is supported in~$(-\epsilon, \epsilon)$.
\end{proof}

\begin{lem}[{\cite[Prop.\ 10.3.5]{HigRoe:KHom}}]\label{lem:10.3.5}
	If~$D$ is an essentially self\-/adjoint  differential operator on~$M$ and~$\psi$ a bounded Borel function on~$\RR$ whose Fourier transform has compact support, then for all~$u, v\in \Gamma^\infty_{c} (M;S)$, we have
	\[
	\biginner{\psi(D){u}}{{v}} 
	=
	\Bigiinner{\hat{\psi}}{s\mapsto \inner{\mathsf{e}^{2\pi isD}{u}}{{v}}}
	.
	\]
\end{lem}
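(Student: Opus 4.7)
The plan is to apply Fourier inversion inside a spectral integral: first verify the identity for Schwartz $\psi$, and then extend to general $\psi$ by smoothing $\hat\psi$ with an approximate identity.

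For the Schwartz case, let $\mu_{u,v}$ denote the complex spectral measure of $\overline D$ on $u,v$; by the spectral theorem, $|\mu_{u,v}|(\RR)\leq \norm{u}_{2}\norm{v}_{2}$ and $\inner{h(D)u}{v}=\int_{\RR} h\,d\mu_{u,v}$ for every bounded Borel $h$. Substituting the inversion formula $\psi(\lambda)=\int_{\RR}\hat\psi(s)\,\mathsf{e}^{2\pi is\lambda}\,\mathrm{d}s$ (valid since $\hat\psi\in\Schw\subseteq\mathrm{L}^1$) and invoking Fubini (justified because $\int|\hat\psi(s)|\,\mathrm{d}s \cdot |\mu_{u,v}|(\RR)<\infty$) yields
\[
\inner{\psi(D)u}{v}
=
\int_{\RR}\hat\psi(s)\inner{\mathsf{e}^{2\pi isD}u}{v}\,\mathrm{d}s
=
\Bigiinner{\hat\psi}{s\mapsto\inner{\mathsf{e}^{2\pi isD}u}{v}}.
\]

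For general bounded Borel $\psi$ whose $\hat\psi$ has compact support, I would regularize. Fix a nonnegative $\rho\in C_{c}^\infty(\RR)$ with $\int\rho=1$, write $\rho_\epsilon(t):=\epsilon^\mi\rho(t/\epsilon)$, and set $\psi_\epsilon:=\psi\cdot\check{\rho_\epsilon}$. A short Fourier computation gives $\hat{\psi_\epsilon}=\hat\psi*\rho_\epsilon$, which is a smooth compactly supported function by \Cref{lem:ConvCmpctSupp}; hence $\psi_\epsilon\in\Schw$ and the Schwartz case above yields $\inner{\psi_\epsilon(D)u}{v}=\iinner{\hat{\psi_\epsilon}}{F_{u,v}}$ for $F_{u,v}(s):=\inner{\mathsf{e}^{2\pi isD}u}{v}$. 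Then I pass $\epsilon\to 0$: on the left, $\check{\rho_\epsilon}\to 1$ pointwise with $|\check{\rho_\epsilon}|\leq 1$, so $\psi_\epsilon\to\psi$ pointwise with uniform bound, and \Cref{SR:FctCalcConvergence} of functional calculus gives $\psi_\epsilon(D)\to\psi(D)$ strongly; on the right, \Cref{def:convForDist} rewrites the pairing as $\iinner{\hat\psi}{F_{u,v}*\tilde{\rho_\epsilon}}$, and $F_{u,v}*\tilde{\rho_\epsilon}\to F_{u,v}$ in $C^\infty(\RR)$, so continuity of the compactly supported distribution $\hat\psi$ produces the desired limit.

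The main obstacle is this last convergence: it requires that $F_{u,v}$ be genuinely smooth, which relies on $u,v\in\Gamma_{c}^\infty(M;S)$ being $C^\infty$\=/vectors for $\overline D$ (iterated $D$ preserves $\Gamma_{c}^\infty$, so $u$ lies in $\bigcap_{k}\dom\overline D^{k}$), and it uses that a compactly supported distribution pairs continuously with $C^\infty(\RR)$ in the standard Fréchet topology. Apart from that, the Fubini step in the Schwartz case and the construction of $\psi_\epsilon$ are routine.
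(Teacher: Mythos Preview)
Your argument is correct and follows essentially the same route as the paper: establish the identity for Schwartz $\psi$ via Fourier inversion, then for general $\psi$ mollify $\hat\psi$ by an approximate identity $\rho_\epsilon$, use \Cref{SR:FctCalcConvergence} on the left-hand side, and pass to the limit on the right using that $\hat\psi$ has compact support and hence acts continuously on $C^\infty(\RR)$. Your treatment of the Schwartz case via the spectral measure and an explicit Fubini step is slightly more careful than the paper's, and you have correctly isolated the one genuine subtlety, namely that $s\mapsto\inner{\mathsf{e}^{2\pi isD}u}{v}$ must be smooth so that the mollified functions converge in the Fr\'echet topology of $C^\infty(\RR)$; the paper handles this by noting that each $\partial^m g$ is bounded and inserting a cutoff $h\equiv 1$ on $\mathsf{supp}(\hat\psi)$, which amounts to the same thing.
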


\begin{proof}
	We follow the idea given in \cite[Prop.\ 10.3.5]{HigRoe:KHom}. If we first take~$\psi_{1} \in \Schw$, then~$\psi_{1} = \check{\hat{\psi}}_{1}\,$, so that 
	\[
		\inner{\psi_{1}(D) u}{v}
		=
		\inner{
			\left(
				\int \mathsf{e}^{2\pi\mathbf{i} s D} \hat{\psi}_{1}(s) \,\mathrm{d}s 
			\right) u}
		{v}
		=
		\int
		\inner{
			 \mathsf{e}^{2\pi\mathbf{i} s D}u  }
		{v}
		\hat{\psi}_{1}(s) \,\mathrm{d}s .
	\]
	Since for functions in~$\mathrm{L}^1(\RR)$, the classical Fourier transform coincides with the distributional Fourier transform
	, the above equation can be rewritten as
	\begin{align*}
		\inner{\psi_{1}(D) u}{v}
		=
		\iinner{\hat{\psi}_{1}}{g},
	\end{align*}
	where~$g(s):=\inner{\mathsf{e}^{2\pi\mathbf{i} s D}u }{v}$, which was to be shown. Using the inversion formula for~$\psi_{2}=\hat{\psi}_{1}\in \Schw$ once more, we could also write this as
	\begin{equation}\label{ClaimForSchwartz}
		\inner{\check{\psi}_{2}(D) u}{v}
		=
		\iinner{{\psi}_{2}}{g}
	\end{equation}
	for~$\psi_{2} \in\Schw$ arbitrary
	Now let us take a general~$\psi$ as specified in the \namecref{lem:10.3.5}. As explained in \Cref{bddGivesTempDist},~$\psi$ gives rise to a tempered distribution, denoted by~$F$ for now. In particular, it makes sense to speak of its Fourier transform. Fix some \mbox{$\phi\in C_{c}^{\infty} (\RR,\RR)$} with \mbox{$\int \phi(x) \,\mathrm{d}x  = 1$,} and define \mbox{$\phi_{t} (x) := \frac{1}{t} \phi(\frac{x}{t})$.} Since we have assumed~$\hat{F}$ to have compact support,~$\hat{F}\ast \phi_{t} \in C_{c}^{\infty}(\RR)$ by \Cref{lem:ConvCmpctSupp}, so that \Cref{ClaimForSchwartz} implies
	\begin{align}\label{intermediatestep}
		\inner{(\hat{F}\ast \phi_{t})\check{}\, (D) u}{v}
		=
		\iinner{\hat{F}\ast \phi_{t}}{g}.
	\end{align}
	
	
	If we can now show that
	\begin{enumerate} 
		\item\label{No0}~$(\hat{F}\ast \phi_{t})\check{}\,=\psi\cdot\check{{\phi}}_{t}$,
		\item\label{No1}~$\underset{t\rightarrow 0}{\mathrm{lim}} \inner{(\psi \cdot \check{{\phi}}_{t}) (D) u}{v} = \inner{\psi (D) u}{v}$,  and
		\item\label{No2}~$\underset{t\rightarrow 0}{\mathrm{lim}} \iinner{\hat{F}\ast \phi_{t}}{g} = \iinner{\hat{F}}{g}$,
	\end{enumerate}
	then 
	\begin{align*}
		\inner{\psi (D) u}{v}
		=
		\underset{t\rightarrow 0}{\mathrm{lim}} \inner{(\hat{F}\ast \phi_{t})\check{}\, (D) u}{v}
		\stackrel{\text{(\ref{intermediatestep})}}{=}
		\underset{t\rightarrow 0}{\mathrm{lim}}	\iinner{\hat{F}\ast \phi_{t}}{g}
		=
		\iinner{\hat{F}}{g},
	\end{align*}
	so we would be done.
	\begin{description}
				\item[ad \textup{(\ref{No0})}] By virtue of \cite[p.~283]{Folland:RA}, it suffices to check that the functions induce the same distribution: we recall that~$\tilde{\phi} (x) = {\phi} (-x)$, and compute for~$f\in C_{c}^\infty$,
		\begin{align*}
		\qquad\iinner{(\hat{F}\ast \phi_{t})\check{}\,}{f}
		&=
		\iinner{\hat{F}\ast \phi_{t}}{\check{f}}
		=
		\iinner{\hat{F}}{\check{f}\ast\tilde{\phi}_{t}}
		=
		\iinner{F}{\left(\check{f}\ast\tilde{\phi}_{t}\right)\hat{}\,}\\
				&
		=
		\iinner{F}{\hat{\check{f}}\cdot \hat{\tilde{\phi}}_{t} }=
		\int\limits_{-\infty}^{\infty} \psi (x) f(x) \hat{\tilde{\phi}}_{t}(x) \,\mathrm{d}x 
		=
		\iinner{\psi\cdot\check{{\phi}}_{t}}{f}.
		\end{align*}
		
		\item[ad \textup{(\ref{No1})}] Using \Cref{SR:FctCalcConvergence} of Functional Calculus, it is sufficient to show that $\left\{\supnorm{\psi\,\cdot \check{\phi}_{t}}\right\}_{t}$ is bounded and that~$\psi\cdot \check{\phi}_{t}$ converges pointwise to~$\psi$: first of all, \mbox{$\check{\phi}_{t}(x) = \hat{\phi}(-tx)$} implies
		\[
			\qquad\supnorm{\psi\cdot \check{\phi}_{t}}
			\leq
			\supnorm{\psi} \cdot \supnorm{\hat{\phi}} < \infty.
		\]
		Secondly, suppose~$\mathsf{supp} (\phi) \subseteq [-a,a]$, and take~$h\in C_{c}^\infty$ such that \mbox{$h_{|[-a,a]} \equiv 1$,} so that \mbox{$\phi_{t} =\phi_{t}\cdot h$} for~$t\leq 1$. Since~$\phi_{t}\longrightarrow\delta$ in~$\mathcal{D}'$ for~$t\longrightarrow 0$ by \cite[Prop.~9.1]{Folland:RA}, we let for~$y\in \RR$, 
		\mbox{$f_{y} (x) := \mathsf{e}^{2\pi\mathbf{i} xy} h(x)\in C_{c}^\infty$} 
		and get
		\[
			\qquad\check{\phi}_{t} (y)
			=
			\int\limits_{-\infty}^{\infty}  \mathsf{e}^{2\pi\mathbf{i} xy}\phi_{t} (x) h(x)\,\mathrm{d}x 
			=
			\iinner{\phi_{t}}{f_{y}}\stackrel{t\to 0}{\longrightarrow}\iinner{\delta}{f_{y}} = f_{y}(0) = 1.
		\]
		\item[ad \textup{(\ref{No2})}] Recall that we defined 	\mbox{$g(s):=\inner{\mathsf{e}^{2\pi\mathbf{i} s D}u }{v}$} for fixed compactly supported sections~$u, v$. Since~$\partial^m g$ is bounded, it follows from \cite[Thm.~8.14(c)]{Folland:RA} that 
		\begin{equation*}
			\qquad\partial^m (g\ast \tilde{\phi}_{t}) = (\partial^m g) \ast \tilde{\phi}_{t} \stackrel{t\to 0}{\longrightarrow} \partial^m g
		\end{equation*}
		uniformly on compact sets.
		 Let us take~$h\in C_{c}^\infty$ such that~$0\leq h \leq 1$ and~$h_{|\mathsf{supp}(\hat{F})}\equiv 1$, where we use that~$\hat{F}$ is compactly supported. As each~$\partial^i h$ has compact support,  we get
		\[
			\qquad\supnorm{
				(\partial^i h)
				\cdot
				\bigl(
					\partial^m (g\ast \tilde{\phi}_{t}) - \partial^m g
				\bigr)
			}
			\stackrel{t\to 0}{\longrightarrow}
			0.
		\]
		It follows by the product rule that, for any~$k$, 
	\begin{align*}
		\qquad\supnorm{\partial^k \bigl( h \cdot [ g\ast \tilde{\phi}_{t} ]\bigr)
		-
		\partial^k \bigl( h \cdot g \bigr)} \stackrel{t\to 0}{\longrightarrow}
		0,
	\end{align*}				
	that is,~$h \cdot [ g\ast \tilde{\phi}_{t} ] \stackrel{t\to 0}{\longrightarrow} h\cdot g$ in~$C^\infty (\RR)$. As~$\hat{F}$ has compact support, it is in the dual space of~$C^\infty (\RR)$. Since~$h_{|\mathsf{supp}(\hat{F})}\equiv 1$, we therefore get
		\begin{align*}
			\qquad\iinner{\hat{F}\ast {\phi}_{t}}{ g }
			=
			\iinner{\hat{F}}{ g\ast \tilde{\phi}_{t} }
			=
			\iinner{\hat{F}}{h \cdot [ g\ast \tilde{\phi}_{t} ]}
			\stackrel{t\to 0}{\longrightarrow}
			\iinner{\hat{F}}{h\cdot g}
			=
			\iinner{\hat{F}}{g},
		\end{align*}
	which finishes our proof.\qedhere
	\end{description}
\end{proof}


\section{The Main Theorem}\label{sec:Thm-main}

\begin{thm}[special case of {\cite[Thm.~10.6.5]{HigRoe:KHom}}]\label{thm:main}
	Let $D$ be a symmetric	elliptic differential operator on a smooth and compact manifold $M$. Let $\mathcal{H} := \mathrm{L}^2 (M; S)$ and let $\M{}$ be the representation of $C(M)$ on $\mathcal{H}$ by multiplication. For $\chi$ a normalizing function and $F:= \chi(D)$, the triple $(\M{}, \mathcal{H}, F)$ is a Fredholm module. Moreover, its class in $\Kth^{1} ( C(M))$	does not depend on the choice of $\chi$ and can hence be denoted by $[D]$.
\end{thm}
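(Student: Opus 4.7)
The plan is to verify each of the three Fredholm\-/module conditions for $(\M{}, \mathcal{H}, F)$ and then deduce independence of the $K$\-/homology class via a compact perturbation argument. Writing $F = \chi(D)$ presupposes essential self\-/adjointness of $D$, which is established in the appendix via Friedrichs' mollifiers; consequently $F$ is defined by the Borel functional calculus of \Cref{FuncCalc}. Since $\chi$ is real\-/valued, \Cref{FuncCalc-selfadj} immediately gives $F = F^*$, so the first Fredholm condition is trivially satisfied.

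For the second condition, I would first show that the resolvent $(D+\mathbf{i})^{-1}$ is compact on $\mathcal{H}$: given $v \in \mathcal{H}$, the element $u := (D+\mathbf{i})^{-1}v$ satisfies $\norm{u}_{2},\norm{Du}_{2} \leq \norm{v}_{2}$ by \Cref{normDpmi=normDmpi}, so G{\aa}rding's Inequality (\Cref{Garding}) yields $\norm{u}_{1} \leq c\norm{v}_{2}$ and hence $(D+\mathbf{i})^{-1}$ factors as $\mathcal{H} \to \mathrm{L}^2_{1}(M;S) \hookrightarrow \mathcal{H}$, where the final inclusion is compact by the Rellich Lemma (\Cref{Rellich}, which applies since $M$ itself is compact). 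Because $\psi_\pm(x) = (x\pm\mathbf{i})^{-1}$ generate $C_{0}(\RR)$ by Stone\-/Weierstrass, $h(D)$ is compact for every $h \in C_{0}(\RR)$. As $\chi(x) \to \pm 1$ for $x \to \pm\infty$ gives $\chi^2 - 1 \in C_{0}(\RR)$, this proves $F^2 - I = (\chi^2 -1)(D)$ is compact, and so is $\M{g}(F^2 - I)$ for every $g \in C(M)$.

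The commutator condition $[\M{g}, F]$ compact is the most delicate step and forms the main obstacle. I would first reduce to $g \in C^\infty(M)$ using density in $C(M)$, the estimate $\norm{[\M{g}, F]} \leq 2\norm{g}_\infty\norm{F}$, and closedness of the compacts. For such $g$, \Cref{commDMisbdd} shows $[D, \M{g}]$ extends to a bounded operator on $\mathcal{H}$; the resolvent identity $[\M{g}, (D-z)^{-1}] = (D-z)^{-1}[D,\M{g}](D-z)^{-1}$ combined with the compactness of the resolvent then yields that $[\M{g}, h(D)]$ is compact for every $h \in C_{0}(\RR)$. To pass from $C_{0}(\RR)$ to the normalizing function $\chi$ itself---which only lies in $C_{b}(\RR)$---I would invoke \Cref{ChiExists} to pick $\chi$ with $\hat\chi$ compactly supported, and then use \Cref{lem:10.3.5} to represent $\chi(D)$ weakly as $\int \hat\chi(s)\,\mathsf{e}^{2\pi\mathbf{i} sD}\,\mathrm{d}s$. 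One now expands $[\M{g}, \mathsf{e}^{2\pi\mathbf{i} sD}]$ via Duhamel's formula into an integral of wave\-/group conjugates of $[D,\M{g}]$, and deduces compactness of the resulting iterated integral by combining the boundedness of $[D,\M{g}]$ with the compact\-/resolvent machinery above. Pulling these threads together is the technical heart of the argument and corresponds to \cite[Prop.~10.3.1]{HigRoe:KHom}.

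Finally, for independence of $\chi$, the difference of any two normalizing functions $\chi_{1} - \chi_{2}$ vanishes at $\pm\infty$ and therefore lies in $C_{0}(\RR)$; by the argument of the second step, $\chi_{1}(D) - \chi_{2}(D) = (\chi_{1}-\chi_{2})(D)$ is compact. Hence $(\M{}, \mathcal{H}, \chi_{1}(D))$ and $(\M{}, \mathcal{H}, \chi_{2}(D))$ are compact perturbations of one another, and by \Cref{CompactPertImpliesHomotopic} they are operator homotopic, defining the same class $[D] \in \Kth^{1}(C(M))$.
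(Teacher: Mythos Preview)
Your steps for self\-/adjointness of $F$, compactness of $F^2-1$, and independence of $\chi$ are correct and match the paper's argument (the compact\-/resolvent calculation via G{\aa}rding and Rellich is exactly \Cref{prop:ctsFuncCalcCpct}, and the perturbation argument is \Cref{lem:CpctPert} together with \Cref{CompactPertImpliesHomotopic}).

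The commutator step, however, contains a genuine gap. Your argument that $[\M{g},h(D)]$ is compact for $h\in C_{0}(\RR)$ via the resolvent identity is fine. The problem is the passage to $\chi\in C_{b}(\RR)$. Duhamel's formula
\[
[\M{g},\mathsf{e}^{2\pi\mathbf{i} sD}]=-2\pi\mathbf{i}\int_{0}^{s}\mathsf{e}^{2\pi\mathbf{i}(s-r)D}\,[D,\M{g}]\,\mathsf{e}^{2\pi\mathbf{i} rD}\,\mathrm{d}r
\]
only shows that this commutator is \emph{bounded} (with norm $\leq 2\pi|s|\cdot\norm{[D,\M{g}]}$), not compact: the integrand is a bounded operator sandwiched between unitaries, and nothing in it is compact. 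Pairing with the compactly supported distribution $\hat\chi$ therefore yields a bounded operator, but you have given no mechanism that forces it into $\K(\mathcal{H})$. The phrase ``combining \ldots\ with the compact\-/resolvent machinery'' does not indicate an actual argument; there is no resolvent factor to extract from $\mathsf{e}^{2\pi\mathbf{i} rD}$ without introducing an unbounded $(D+\mathbf{i})$ elsewhere.

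The paper takes an entirely different route for this step (\Cref{lem:CommutatorCpct}). It does \emph{not} reduce to smooth $g$ or invoke Duhamel at all. Instead it applies Kasparov's lemma (\Cref{KasparovsLemma}) to reduce to showing that $\M{f_{1}}\chi(D)\M{f_{2}}$ is compact whenever $f_{1},f_{2}\in C(M)$ have disjoint supports. Since any two normalizing functions differ by an element of $C_{0}(\RR)$, one may replace $\chi$ by a normalizing function $\chi_{1}$ whose distributional Fourier transform is supported in $(-\epsilon,\epsilon)$ (\Cref{ChiExists}). The finite\-/propagation result \Cref{10.3.1} (this is what \cite[Prop.~10.3.1]{HigRoe:KHom} actually says, and it is \emph{not} a Duhamel estimate) gives $\M{f_{1}}\mathsf{e}^{2\pi\mathbf{i} sD}\M{f_{2}}=0$ for $|s|<\epsilon$; feeding this into \Cref{lem:10.3.5} yields $\M{f_{1}}\chi_{1}(D)\M{f_{2}}=0$ outright. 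Thus the paper obtains vanishing, not merely compactness, for a suitable $\chi_{1}$, and then transfers to arbitrary $\chi$ via the $C_{0}$\-/difference.
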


This is the \namecref{thm:main} we are set out to prove. As a first step, let us see that we have functional calculus at our disposal, so that $\chi(D)$ makes sense.

\begin{prop}[{\cite[Lemma~10.2.5]{HigRoe:KHom}}]\label{HigRoe:10.2.5}\label{cor:ess.sa}
	Let $D$ be a symmetric differential operator on a smooth manifold $M$ and let \mbox{$u\in \mathrm{L}^2 (M; S)$} be compactly supported. Then $u\in \dom \overline{D}$ if and only if $u\in \dom D^*$. In particular, if $M$ is compact, then $D$ is essentially self\-/adjoint.
\end{prop}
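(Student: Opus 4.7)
The reverse implication is free: since $D$ is symmetric, $D^{\ast}$ extends $D$, and because $D^{\ast}$ is automatically closed, $D^{\ast}$ also extends $\overline{D}$. Hence $\dom\overline{D}\subseteq\dom D^{\ast}$ with $D^{\ast}u=\overline{D}u$ on the smaller domain, regardless of support considerations. The content of the proposition is therefore the forward direction, plus the deduction that on compact $M$ we obtain essential self\-/adjointness.

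The plan for the forward direction is to use \emph{Friedrichs' mollifiers}, whose construction is deferred to the appendix (Exercise~10.9.1 of~\cite{HigRoe:KHom}). Suppose $u\in\dom D^{\ast}$ is supported in a compact set $K\subseteq M$. By \Cref{AdjDPresveresSupp}, $D^{\ast}u$ is also supported in $K$, so both $u$ and $D^{\ast}u$ lie in $\mathrm{L}^{2}(K;S)$. The Friedrichs mollifiers supply a family $\{\epsilon_{n}\}_{n\in\NN}$ of bounded operators on $\mathrm{L}^{2}(M;S)$ such that, for any compactly supported $v\in\mathrm{L}^{2}(M;S)$:
\begin{enumerate}
    \item $\epsilon_{n}v\in\Gamma_{c}^{\infty}(M;S)=\dom D$;
    \item $\epsilon_{n}v\to v$ in $\mathrm{L}^{2}$\=/norm as $n\to\infty$;
    \item the commutator $[D,\epsilon_{n}]v$ extends to a bounded operator on $\mathrm{L}^{2}$ and tends to $0$ in $\mathrm{L}^{2}$\=/norm (Friedrichs' lemma).
\end{enumerate}
Setting $u_{n}:=\epsilon_{n}u$, property~(1) gives $u_{n}\in\dom D$, and property~(2) gives $u_{n}\to u$. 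The decomposition
\[
    Du_{n}\;=\;D\epsilon_{n}u\;=\;\epsilon_{n}(D^{\ast}u)+[D,\epsilon_{n}]u
\]
(valid because $u,D^{\ast}u\in\dom D^{\ast}$ and $\epsilon_{n}$ preserves this property for compactly supported inputs) combined with property~(2) applied to $D^{\ast}u$ and property~(3) applied to $u$, yields $Du_{n}\to D^{\ast}u$ in $\mathrm{L}^{2}$. Thus the pairs $(u_{n},Du_{n})$ lie in the graph of $D$ and converge to $(u,D^{\ast}u)$, proving $u\in\dom\overline{D}$ with $\overline{D}u=D^{\ast}u$.

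For the ``in particular'' clause, compactness of $M$ means that every element of $\mathrm{L}^{2}(M;S)$ is automatically compactly supported. The equivalence just established therefore holds for every $u\in\dom D^{\ast}$, giving $\dom\overline{D}=\dom D^{\ast}$, which is the definition of essential self\-/adjointness stated in \Cref{ssec:unbddop-terms}.

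The main technical obstacle is establishing the Friedrichs lemma: that $[D,\epsilon_{n}]$ extends to a bounded operator on $\mathrm{L}^{2}$ with norm tending to $0$, even though $D$ itself is unbounded. This is a genuinely non\-/trivial analytic fact, requiring careful local analysis of $D$ using the form in \Cref{DiffOpLook} together with integration\-/by\-/parts estimates against the convolution kernels defining $\epsilon_{n}$; the rest of the argument is essentially just the graph\-/closure bookkeeping above, so the full weight of the proposition sits in the mollifier construction banished to the appendix.
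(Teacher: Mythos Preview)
Your argument follows the same skeleton as the paper's: apply Friedrichs' mollifiers to $u$, set $u_{n}=\epsilon_{n}u$, and decompose $Du_{n}=\epsilon_{n}(D^{\ast}u)+[D,\epsilon_{n}]u$. The one substantive difference is in what you demand of the mollifiers. You invoke the full Friedrichs lemma, namely that $[D,\epsilon_{n}]u\to 0$ in $\mathrm{L}^{2}$, and then conclude that $Du_{n}\to D^{\ast}u$, i.e.\ genuine graph convergence. The paper's appendix (\Cref{Fmolli}, Property~\ref{Ft:bdd}) only establishes the weaker fact that $\norm{[D,F_{t}]}$ is \emph{bounded} uniformly in $t$, not that it tends to zero. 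To compensate, the paper appeals to \Cref{lem:minDom}, which says $u\in\dom\overline{D}$ as soon as there is a sequence $u_{n}\in\dom D$ with $u_{n}\to u$ and $\{\norm{Du_{n}}\}_{n}$ merely bounded; convergence of $Du_{n}$ is not needed.

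Both routes are correct. Yours is the classical formulation and gives the sharper conclusion $\overline{D}u=D^{\ast}u$ directly from graph convergence; the paper's is more economical in that it only requires the weaker commutator estimate actually proved in the appendix, at the price of invoking the extra characterisation \Cref{lem:minDom}. If you keep your version, be aware that the appendix as written does not supply your property~(3); you would need to strengthen it (which is standard but does require an additional argument beyond uniform boundedness).
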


To prove \Cref{HigRoe:10.2.5}, we need the following two lemmas.
\begin{lem}[{\cite[Lemma~1.8.1]{HigRoe:KHom}} - without proof]\label{lem:minDom}
	If $D$ is a closable unbounded operator, then $u\in \dom \overline{D}$ if and only if there exists a sequence $\{u_{j}\}_{j}$ in $\dom D$ such that $u_{j} \to u$ and $\{\norm{D u_{j}} \}_{j}$  is bounded.
\end{lem}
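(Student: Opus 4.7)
The forward direction is essentially a tautology: if $u \in \dom \overline{D}$, then $(u, \overline{D}u)$ lies in the closure of the graph of $D$, so there is a sequence $(u_j, Du_j)$ in the graph with $u_j \to u$ and $Du_j \to \overline{D}u$ in $\mathcal{H}$. In particular $\{\norm{Du_j}\}_j$ is a convergent, hence bounded, sequence.

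The interesting direction is the converse. Given a sequence $\{u_j\} \subseteq \dom D$ with $u_j \to u$ and $\sup_j \norm{Du_j} < \infty$, I want to produce a sequence $\{w_N\} \subseteq \dom D$ such that $(w_N, Dw_N)$ converges in $\mathcal{H} \oplus \mathcal{H}$ to some pair $(u,v)$; by closability this will force $u \in \dom \overline{D}$ with $\overline{D}u = v$. The plan is to exploit Hilbert space weak compactness: since $\{Du_j\}$ is norm-bounded, the Banach\-/Alaoglu theorem (in its sequential form for Hilbert spaces) yields a subsequence, which I again relabel $\{Du_j\}$, and a vector $v \in \mathcal{H}$ with $Du_j \rightharpoonup v$ weakly. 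Of course $u_j \to u$ strongly still holds for this subsequence.

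The next step is to upgrade weak convergence to strong convergence via \emph{Mazur's lemma}: there exist convex combinations
\[
	w_N \;=\; \sum_{j=N}^{M_N} \lambda_j^{(N)} u_j,
	\qquad \lambda_j^{(N)} \geq 0,\quad \sum_{j=N}^{M_N} \lambda_j^{(N)} = 1,
\]
such that $\sum_{j=N}^{M_N} \lambda_j^{(N)} Du_j \to v$ in norm. Using the same coefficients, the identity $\sum_j \lambda_j^{(N)} = 1$ combined with the strong convergence $u_j \to u$ forces $w_N \to u$ in norm as well (any convex combination of elements of a tail of a convergent sequence converges to the same limit). Since $D$ is linear on $\dom D$, $Dw_N = \sum_j \lambda_j^{(N)} Du_j$, and so $(w_N, Dw_N) \to (u,v)$ in the graph norm. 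This places $(u,v)$ in the closure of the graph of $D$; because $D$ is closable, this closure is the graph of $\overline{D}$, giving $u \in \dom \overline{D}$ (with $\overline{D}u = v$ as a bonus).

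The main obstacle is the passage from the norm bound on $\{Du_j\}$ to an actually convergent sequence in $\mathcal{H}$; norm-boundedness alone does not give this, and one must invoke reflexivity of Hilbert space to extract a weakly convergent subsequence, then apply Mazur to return to norm convergence without disturbing $u_j \to u$. No other technology beyond the definition of closability is required.
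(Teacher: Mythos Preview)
The paper does not prove this lemma at all; it is quoted from \cite[Lemma~1.8.1]{HigRoe:KHom} and explicitly marked ``without proof''. Your argument is therefore not being compared against anything in the paper, but it stands on its own: the forward direction is immediate, and for the converse your use of sequential weak compactness of bounded sets in Hilbert space followed by Mazur's lemma is exactly the standard route (and is, in fact, the argument Higson and Roe themselves give). The only point worth making explicit is why the Mazur coefficients can be taken from tails $\{j \geq N\}$: since every tail of a weakly convergent sequence is still weakly convergent to the same limit, Mazur applies to each tail separately, which is what lets you force $w_N \to u$ in norm simultaneously with $Dw_N \to v$. You have this implicitly in your indexing $\sum_{j=N}^{M_N}$, but it is the one step a reader might question.
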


\begin{lem}[{\cite[Exercise~10.9.1]{HigRoe:KHom}}]\label{Fmolli}	
	For $K\subseteq M$ compact, there exist for sufficiently small $\epsilon > t > 0$, operators $F_{t}\colon  \mathrm{L}^2 (K; S) \to \mathrm{L}^2 (M; S)$ which satisfy
	\begin{enumerate}[label=\textup{(\arabic*)}]
		\item\label[property]{Ft:norm} $\norm{F_{t}}\leq C$ for some constant $C$ and all $t$,
		\item\label[property]{Ft:1} $\forall  u\in \mathrm{L}^2 (K; S): \, \lim_{t\to 0} F_{t} u = u$ in $\mathrm{L}^2 (M; S)$,
		\item\label[property]{Ft:SmoothCpct} $\forall  u\in \mathrm{L}^2 (K; S):$ $F_{t} u$ is smooth with compact support, and
		\item\label[property]{Ft:bdd} for any differential operator $D$ on $M$, $\comm{D}{F_{t}}$ extends to a bounded operator \linebreak[4]\mbox{$\mathrm{L}^2 (K; S) \to \mathrm{L}^2 (M; S)$,} and its norm is bounded independent of $t$.
	\end{enumerate}
\end{lem}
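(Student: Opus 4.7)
The plan is to construct $F_t$ as Friedrichs' mollifiers via a partition-of-unity reduction to convolution on $\RR^n$. Cover $K$ by finitely many charts $\{(U_i, \varphi_i, \Psi_i)\}_{i=1}^\ell$ that trivialize $S$; pick a subordinate smooth partition of unity $\{\rho_i\}$ with $\sum_i \rho_i \equiv 1$ on $K$, and cutoffs $\tilde\rho_i \in C_c^\infty(U_i)$ with $\tilde\rho_i \equiv 1$ on $\mathsf{supp}(\rho_i)$. Fix a nonnegative bump $\phi \in C_c^\infty(\RR^n)$ with $\int \phi\,\mathrm{d}\lambda = 1$ and $\mathsf{supp}(\phi) \subseteq \{\|x\|\leq 1\}$, and set $\phi_t(x) := t^{-n}\phi(x/t)$. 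For $u \in \mathrm{L}^2(K; S)$, define
\[
F_t^{(i)} u := \tilde\rho_i \cdot (\Psi_i^*)^{-1}\bigl(\phi_t \ast \Psi_i^*(\rho_i u)\bigr) \qquad \text{and} \qquad F_t u := \sum_{i=1}^\ell F_t^{(i)} u,
\]
where $\Psi_i^*(\rho_i u)$ is regarded as a $\CC^k$-valued function on $\RR^n$ (extended by zero) and convolved componentwise with $\phi_t$. For $t$ smaller than the distance between $\mathsf{supp}(\rho_i\circ\varphi_i^{-1})$ and $\partial \varphi_i(U_i)$, everything stays inside the chart and $F_t u \in \Gamma_c^\infty(M; S)$, giving \Cref{Ft:SmoothCpct}. \Cref{Ft:1} follows from the classical fact $\phi_t \ast h \to h$ in $\mathrm{L}^2(\RR^n)$ combined with $\sum_i \tilde\rho_i \rho_i = 1$ on $K$; \Cref{Ft:norm} follows from Young's inequality $\|\phi_t \ast h\|_2 \leq \|\phi\|_1 \|h\|_2 = \|h\|_2$ and the uniform bounds $\supnorm{f_i}, \supnorm{g_i}\leq L$ from \Cref{rmk:standAssu-L}, which transfer $\mathrm{L}^2$-norms between $M$ and $\RR^n$ with a controlled distortion.

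The main obstacle is \Cref{Ft:bdd}. Writing $T_t^{(i)} := (\Psi_i^*)^{-1}\bigl(\phi_t \ast \Psi_i^*(\cdot)\bigr)$ so that $F_t^{(i)} = \M{\tilde\rho_i}\,T_t^{(i)}\,\M{\rho_i}$, we decompose
\[
\comm{D}{F_t^{(i)}} = \M{\tilde\rho_i}\,\comm{D}{T_t^{(i)}}\,\M{\rho_i} + \comm{D}{\M{\tilde\rho_i}}\,T_t^{(i)}\,\M{\rho_i} - \M{\tilde\rho_i}\,T_t^{(i)}\,\comm{D}{\M{\rho_i}}.
\]
The last two summands are uniformly bounded because $\comm{D}{\M{\tilde\rho_i}}$ and $\comm{D}{\M{\rho_i}}$ extend to bounded operators by \Cref{commDMisbdd} and $T_t^{(i)}$ is uniformly bounded as in \Cref{Ft:norm}. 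For the principal term we transfer $D$ through the chart to a first-order operator $\tilde D = \sum_j \tilde A^j(x)\partial_j + \tilde B(x)$ on $\RR^n$ whose matrix-valued coefficients are smooth and have bounded derivatives on a neighborhood of $\mathsf{supp}(\tilde\rho_i \circ \varphi_i^{-1})$ (extend them by a cutoff so that these bounds hold globally on $\RR^n$). Since differentiation commutes with convolution,
\[
\comm{\tilde D}{\phi_t \ast \cdot} = \sum_{j} \comm{\M{\tilde A^j}}{\phi_t \ast \cdot}\,\partial_j + \comm{\M{\tilde B}}{\phi_t \ast \cdot}.
\]

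The heart of the argument is estimating these two types of commutators uniformly in $t$. For any smooth matrix-valued $C$ with Lipschitz constant $L_C$, the kernel $(C(x)-C(y))\phi_t(x-y)$ of $\comm{\M{C}}{\phi_t \ast \cdot}$ is dominated by $L_C \|x-y\|\phi_t(x-y)$, whose $\mathrm{L}^1$-norm in the difference variable is $L_C \cdot t \int \|w\|\phi(w)\,\mathrm{d}\lambda(w) = O(t)$ by the substitution $w = z/t$, so Schur's test yields operator norm $O(t)$. For the terms involving $\partial_j$, integrate by parts on $\Gamma_c^\infty$ to rewrite $\comm{\M{\tilde A^j}}{\phi_t \ast \cdot}\,\partial_j$ as $\M{\partial_j \tilde A^j}(\phi_t \ast \cdot)$ plus an operator with kernel $(\tilde A^j(x)-\tilde A^j(y))(\partial_j \phi_t)(x-y)$; the first summand is uniformly bounded, while the second has $\mathrm{L}^1$ kernel norm $\int \|z\|\,|\partial_j \phi_t(z)|\,\mathrm{d}\lambda(z) = \int \|w\|\,|\partial_j \phi(w)|\,\mathrm{d}\lambda(w)$, independent of $t$ by the same substitution, so Schur's test gives a uniform $\mathrm{L}^2$-bound. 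Since $\Gamma_c^\infty(K; S)$ is dense in $\mathrm{L}^2(K; S)$, the identity and bound extend by continuity, completing the proof. The technical care lies in tracking where each operator is defined: the integration-by-parts identity holds a priori only on smooth compactly supported sections, but the uniform norm bound obtained there lets one extend $\comm{D}{F_t}$ by density to all of $\mathrm{L}^2(K; S)$.
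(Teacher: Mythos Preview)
Your proof is correct and follows essentially the same route as the paper: a partition-of-unity reduction to convolution mollifiers on $\RR^n$, with the key commutator estimate handled by writing out the integral kernel and applying Schur's test. The only cosmetic differences are that you introduce an extra outer cutoff $\tilde\rho_i$ (the paper instead just takes $t$ small enough that the convolution stays inside the chart), and two harmless slips: the Leibniz expansion of $\comm{D}{\M{\tilde\rho_i}T_t^{(i)}\M{\rho_i}}$ should have a $+$ on the last term, and after integrating by parts the zeroth-order piece is $\phi_t\ast(\partial_j\tilde A^j\cdot u)$ rather than $\M{\partial_j\tilde A^j}(\phi_t\ast u)$---but both are uniformly bounded, so neither affects the argument.
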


We remark that the constant in \Cref{Ft:norm} is usually supposed to be $1$, but $C$ is good enough for us.
For a proof of the existence of these so\-/called \textit{Friedrichs' mollifiers}, see the appendix on p.~\pageref{pf:Fmolli}.

\begin{proof}[Proof of \Cref{HigRoe:10.2.5}.]
		Since the minimal domain of $D$ is always contained in the maximal domain, let us take $u\in \dom D^*$ with compact support (pick any representative). According to \Cref{lem:minDom}, we need to find a sequence of $v_{n}$ in $\dom D$ which converges to $u$ in the Hilbert space and such that $\{ \norm{Dv_{n}}\}_{n}$ is bounded.
		Let us take $F_{t}$ as in \Cref{Fmolli} for $K:= \mathsf{supp}(u)$,  let $t_{n}$ be a sequence converging to $0$, and let $v_{n} := F_{t_{n}} u$. Since $v_{n} \in \Gamma^\infty_{c} (M;S)$ by \Cref{Ft:SmoothCpct} of the mollifiers, it is in the domain of $D$, and by \Cref{Ft:1}, $v_{n} \to u$ in $\mathrm{L}^2 (M; S)$.  It remains to see why the sequence $D(v_{n})$ is bounded:
				
		By \Cref{AdjDPresveresSupp},  $D^* u$ is in $\mathrm{L}^2 (K; S)$, so $F_{t} (D^* u)$ makes sense. Moreover, by \Cref{Ft:bdd} of the mollifiers, we also have that $\comm{D}{F_{t}} u$ has a well-defined meaning. All in all, we can therefore write
		\[
			D(v_{n}) = D^*(F_{t_{n}} u) = F_{t_{n}} (D^* u) + \comm{D}{F_{t_{n}}} u.
		\]
		Because of \Cref{Ft:norm} and \Cref{Ft:bdd} of $\{F_{t}\}_{t}$, there exists $C>0$ such that for all $t$, we have $\norm{F_{t}}, \norm{ \comm{D}{F_{t}}} < C$. Hence 
		\[
			\norm{D(v_{n})}
			\leq \norm{F_{t_{n}} (D^* u)} + \norm{[D, F_{t_{n}}] u}
			\leq C \cdot(\norm{D^* u} + \norm{u}),
		\]
		so the sequence is indeed bounded.
	\end{proof}

The next \namecref{prop:ctsFuncCalcCpct} will show that the class $[D]$ does not depend on the choice of normalizing function $\chi$, and that $\chi(D)^2-1$ is compact.

\begin{prop}[{\cite[Prop.~10.4.5, Lemma~10.6.3]{HigRoe:KHom}}]\label{prop:ctsFuncCalcCpct}\label{lem:CpctPert}
	If $D$ is a symmetric elliptic differential operator on a compact manifold $M$, and $\varphi\in C_{0} (\RR)$, then \linebreak[4]\mbox{$\varphi(D)\colon  \mathrm{L}^2 (M; S)\to \mathrm{L}^2 (M; S)$} is a compact operator. In particular, if $\chi_{1}, \chi_{2}$ are normalizing functions, then the operators $\chi_{1}(D)$ and $\chi_{2} (D)$
		differ only by a compact operator.
\end{prop}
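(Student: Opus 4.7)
The plan is to reduce compactness of $\varphi(\overline{D})$ for arbitrary $\varphi\in C_{0}(\RR)$ to compactness of the two resolvents $(\overline{D}\pm\mathbf{i})^{-1}$, and to obtain the latter by factoring through the Sobolev embedding. Since $D$ is essentially self\-/adjoint by \Cref{cor:ess.sa}, its closure $\overline{D}$ is self\-/adjoint and functional calculus is available (\Cref{FuncCalc}). The functions $\psi_{\pm}(t)=(t\pm\mathbf{i})^{-1}$ lie in $C_{0}(\RR)$, are each other's complex conjugate, separate points of $\RR$, and vanish nowhere; by Stone\-/Weierstrass they thus generate $C_{0}(\RR)$ as a $C^{*}$\-/algebra. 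As $\K(\mathcal{H})$ is a closed ideal in $\B(\mathcal{H})$ and $\varphi\mapsto\varphi(\overline{D})$ is a norm\-/continuous $*$\-/homomorphism on $C_{0}(\RR)$ by \Cref{FctCalc-norm}, it suffices to prove that $\psi_{\pm}(\overline{D})=(\overline{D}\pm\mathbf{i})^{-1}$ are compact.

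For this, I would factor $(\overline{D}-\mathbf{i})^{-1}\colon\mathrm{L}^{2}(M;S)\to\mathrm{L}^{2}(M;S)$ through the Sobolev space as
\[
	\mathrm{L}^{2}(M;S)\xrightarrow{(\overline{D}-\mathbf{i})^{-1}}\mathrm{L}^{2}_{1}(M;S)\hookrightarrow\mathrm{L}^{2}(M;S),
\]
where the inclusion is compact by the Rellich Lemma (\Cref{Rellich}). Boundedness of the first map is what needs work: given $u\in\mathrm{L}^{2}(M;S)$, set $v:=(\overline{D}-\mathbf{i})^{-1}u$, so $\|v\|_{2}\leq\|u\|_{2}$ and $\overline{D}v=u+\mathbf{i} v$, giving $\|\overline{D}v\|_{2}\leq 2\|u\|_{2}$. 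Since $M$ is compact and $v\in\dom\overline{D}$, pick smooth sections $w_{n}\in\Gamma^{\infty}(M;S)=\dom D$ with $w_{n}\to v$ and $Dw_{n}\to\overline{D}v$ in $\mathrm{L}^{2}$. Gårding's Inequality (\Cref{Garding}) gives $c\|w_{n}\|_{1}\leq\|w_{n}\|_{2}+\|Dw_{n}\|_{2}$, which is bounded, so $\{w_{n}\}$ is bounded in the Hilbert space $\mathrm{L}^{2}_{1}(M;S)$. By \Cref{lem:sobolev-norm-bigger}, every element of $\mathrm{L}^{2}(M;S)$ defines a continuous functional on $\mathrm{L}^{2}_{1}(M;S)$ via the $\mathrm{L}^{2}$\-/inner product; hence the weak $\mathrm{L}^{2}_{1}$\-/limit of a weakly convergent subsequence of the $w_{n}$ must coincide with their strong $\mathrm{L}^{2}$\-/limit $v$. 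Lower semicontinuity of the norm under weak limits then yields $v\in\mathrm{L}^{2}_{1}(M;S)$ with $\|v\|_{1}\leq\tfrac{3}{c}\|u\|_{2}$. The argument for $(\overline{D}+\mathbf{i})^{-1}$ is identical.

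For the ``in particular'' statement, note that for normalizing functions $\chi_{1},\chi_{2}$ one has $\chi_{i}(x)\to\pm 1$ as $x\to\pm\infty$, so $\chi_{1}-\chi_{2}\in C_{0}(\RR)$ and $\chi_{1}(\overline{D})-\chi_{2}(\overline{D})=(\chi_{1}-\chi_{2})(\overline{D})$ is compact by the first part. The main obstacle is the upgrade from the smooth approximants $w_{n}$ to $v$ itself, since Gårding's Inequality is stated only for elements of $\mathrm{L}^{2}_{1}(M;S)$ and not a priori for general elements of $\dom\overline{D}$; it is precisely the weak\-/compactness argument in the Hilbert space $\mathrm{L}^{2}_{1}(M;S)$, combined with the continuous embedding of \Cref{lem:sobolev-norm-bigger}, that transfers the Sobolev bound from the $w_{n}$ to their limit.
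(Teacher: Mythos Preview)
Your argument is correct and follows the same overall strategy as the paper: reduce to the resolvents $(\overline{D}\pm\mathbf{i})^{-1}$ via Stone--Weierstrass, then factor these through $\mathrm{L}^{2}_{1}(M;S)$ using G{\aa}rding's Inequality for boundedness and the Rellich Lemma for compactness.

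The one technical difference is in how you show that $v=(\overline{D}-\mathbf{i})^{-1}u$ actually lies in $\mathrm{L}^{2}_{1}(M;S)$. You pass through a weak\-/compactness argument: the approximants $w_{n}$ are Sobolev\-/bounded by G{\aa}rding, a subsequence converges weakly in $\mathrm{L}^{2}_{1}$, and you identify the weak limit with $v$ via \Cref{lem:sobolev-norm-bigger}. The paper instead first proves the equality $\dom\overline{D}=\mathrm{L}^{2}_{1}(M;S)$ outright: given $u\in\dom\overline{D}$, the graph approximants $u_{j}$ satisfy $u_{j}\to u$ and $Du_{j}\to\overline{D}u$ in $\mathrm{L}^{2}$, so G{\aa}rding makes $(u_{j})$ \emph{Cauchy} (not merely bounded) in the Sobolev norm, and completeness of $\mathrm{L}^{2}_{1}(M;S)$ gives $u\in\mathrm{L}^{2}_{1}(M;S)$ directly. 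Once that domain identification is in hand, G{\aa}rding applies to $v$ itself without any weak\-/limit passage. The Cauchy\-/sequence route is a bit cleaner and yields the domain equality as a byproduct; your weak\-/compactness route works just as well but implicitly uses that $\mathrm{L}^{2}_{1}(M;S)$ is a Hilbert space (or at least reflexive), which is true but not stated in the paper.
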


\begin{proof}
	We first want to show that  \mbox{$\dom \overline{D} = \mathrm{L}_{1}^2 (M; S)$} by showing the following containments:
	\[
		\dom \overline{D}\subseteq \mathrm{L}_{1}^2 (M; S)\subseteq\dom D^* = \dom \overline{D}.
	\]
	By \Cref{cor:ess.sa}, our symmetric operator is essentially self\-/adjoint (which explains the equality on the right), and \Cref{cor:SobInMaxdom} gives us \mbox{$\mathrm{L}_{1}^2 (M; S)\subseteq\dom D^*$.} Now suppose \mbox{$u\in \dom \overline{D}$,} that is, \mbox{$(u, \overline{D}u)\in \Gamma(\overline{D}) = \overline{\Gamma({D})}$.} This means there is a sequence \mbox{$(u_{j})_{j} \in \dom D$} such that  \mbox{$u_{j} \longrightarrow u$} and \mbox{$Du_{j} \longrightarrow \overline{D}u$} in \mbox{$\mathrm{L}^2 (M; S)$.} In particular,	\mbox{$(u_{j})_{j}$} is Cauchy in \mbox{$\mathrm{L}^2 (M; S)$,} so \hyperref[Garding]{G{\aa}rding's inequality} implies that \mbox{$(u_{j})_{j}$} is also Cauchy with respect to \mbox{$\norm{\,\cdot\,}_{1}$} (remember that $M$ is assumed compact). As \mbox{$\mathrm{L}_{1}^2 (M; S)$} is (by definition) complete with respect to this norm, \mbox{$(u_{j})_{j}$} thus has a $\norm{\,\cdot\,}_{1}$\-/limit in \mbox{$\mathrm{L}_{1}^2 (M; S)$.} The \hyperref[Rellich]{Rellich lemma}, for example, shows that this limit must coincide with $u$, so we have shown \mbox{$u\in \mathrm{L}_{1}^2 (M; S)$.} All in all, \mbox{$\dom \overline{D} = \mathrm{L}_{1}^2 (M; S)$.}

	Now let us focus on the function $\psi (x) = ({\mathbf{i}+x})^\mi$. Since the domain of $D$ is dense and $\overline{D}$ is self\-/adjoint, \Cref{SR:ResSurjective} implies that $\mathbf{i} + \overline{D}$ has full range.
	Thus, for every $u\in \mathrm{L}_{1}^2 (M; S)$, there exists $v\in \dom \overline{D} = \mathrm{L}_{1}^2 (M; S)$ such that $(\mathbf{i} +\overline{D})v = u$. Since $\overline{D}$ is self\-/adjoint, we know that 
	$$\norm{(\mathbf{i} +\overline{D})v}^2 = \norm{v}^2 + \norm{\overline{D}v}^2,$$
	see \Cref{normDpmi=normDmpi}.
	Hence it follows from \hyperref[Garding]{G{\aa}rding's inequality} and the properties of \hyperref[FuncCalc]{Functional Calculus} that, for some $c>0$, 
	\[
		c\cdot\norm{\psi(\overline{D}) u }_{1}
		=
		c\cdot\norm{v}_{1}
		\leq
		\norm{v} + \norm{\overline{D}v}
		\leq
		\sqrt{2}\norm{(\mathbf{i} +\overline{D})v}
		=
		\sqrt{2}\norm{u}.
	\]
	In other words, $\psi(\overline{D})$ is a bounded operator $\mathrm{L}^2 (M; S)\to\mathrm{L}_{1}^2 (M; S)$, and thus by the \hyperref[Rellich]{Rellich lemma}, it is a compact operator  $\mathrm{L}^2 (M; S)\to\mathrm{L}^2 (M; S)$.
	Lastly,	if we take an arbitrary $\varphi\in C_{0} (\RR)$, then for any $\epsilon >0$, there are finitely many $a_{i,j}\in\mathbb{C}$ such that
	\[
		\supnorm{
			\varphi
			-
			\sum_{i,j=0}^{m} a_{i,j} \psi^i \overline{\psi}{}^j
		}
		< \epsilon, 
	\]
	because $\psi$ generates $C_{0} (\RR)$ as a $C^*$\=/algebra. By \Cref{FctCalc-norm} of Functional Calculus, we get for \mbox{$f:= 	\varphi	-	\sum_{i,j=0}^{m} a_{i,j} \psi^i \overline{\psi}{}^j$} that
	\[
		\norm{f(\overline{D})}
		=
		\supnorm{f}
		< \epsilon .
	\]
	This means that the operator $\varphi(\overline{D})$ is approximated by compact operators 	and is hence itself compact.
\end{proof}
The remaining work before the proof of \Cref{thm:main} on page~\pageref{pf:thm-main} will culminate in \Cref{lem:CommutatorCpct}, which says that $\comm{\chi(D)}{\M{f}}$ is compact for $f\in C(M)$.
\begin{prop}[{\cite[Prop.~10.3.1]{HigRoe:KHom}}]\label{10.3.1}
	If $D$ is an essentially self\-/adjoint differential operator on $M$, and if $W$ is an open neighborhood of a compact set $K\subseteq M$, then there exists $\epsilon >0$ such that
	\[
		\forall  \abs{s} < \epsilon, 
		\forall  u\in \mathrm{L}^2 (K;S)
		:\quad
		\mathsf{supp}
		\left(
			\mathsf{e}^{\mathbf{i} s D} u
		\right)
		\subseteq W.
	\]
\end{prop}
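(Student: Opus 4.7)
The plan is to establish this finite propagation speed statement via an exponentially weighted energy estimate. Since $u \mapsto \mathsf{e}^{\mathbf{i} s D}u$ is bounded and the condition $\mathsf{supp}(\mathsf{e}^{\mathbf{i} s D}u) \subseteq W$ is closed in $u \in \mathrm{L}^2(K;S)$, by density it suffices to take $u \in \Gamma^{\infty}_{c}(M;S)$ supported in $K$; for such $u$, the curve $u_s := \mathsf{e}^{\mathbf{i} s D}u$ lies in $\dom \overline{D}$ and is strongly $\mathrm{L}^2$\-/differentiable in $s$ with $\tfrac{d}{ds} u_s = \mathbf{i} \overline{D} u_s$. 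Next, by a smooth partition of unity I would pick a cutoff $\phi \in C^\infty(M;[0,1])$ which vanishes on an open neighbourhood of $K$, equals $1$ on $M\setminus W$, and has $d\phi$ compactly supported (one may first shrink $W$ to a relatively compact neighbourhood of $K$, as support in a smaller set implies support in $W$). Then $C := \sup_{p\in M}\norm{\sigma_D(p,d\phi_p)} < \infty$.

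For a parameter $\lambda > 0$, I would monitor the weighted energy
\[
h_\lambda(s) := \inner{u_s}{\M{\mathsf{e}^{2\lambda\phi}} u_s} = \int\limits_M \mathsf{e}^{2\lambda\phi(p)} \norm{u_s(p)}_{S_p}^2 \,\mathrm{d}\mu(p),
\]
noting that $h_\lambda(0) = \norm{u}^2$ since $\phi \equiv 0$ on $\mathsf{supp}(u)$. A short computation using the self\-/adjointness of $\overline{D}$, together with the fact that $\M{\mathsf{e}^{2\lambda\phi}}$ preserves $\dom \overline{D}$ (an approximation consequence of \Cref{commDMisbdd}), yields
\[
h_\lambda'(s) = -\mathbf{i} \inner{u_s}{\bigl[\overline{D}, \M{\mathsf{e}^{2\lambda\phi}}\bigr] u_s}.
\]
By \Cref{commDMisbdd}, this commutator is multiplication by the endomorphism $\sigma_D(p, d(\mathsf{e}^{2\lambda\phi})_p) = 2\lambda\, \mathsf{e}^{2\lambda\phi(p)} \sigma_D(p,d\phi_p)$, whose pointwise operator norm is bounded by $2\lambda C\, \mathsf{e}^{2\lambda\phi(p)}$; integrating yields the differential inequality $|h_\lambda'(s)| \leq 2\lambda C\, h_\lambda(s)$.

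Gronwall's inequality then gives $h_\lambda(s) \leq \norm{u}^2 \mathsf{e}^{2\lambda C|s|}$, while the inequality $\phi \geq \mathbb{1}_{M\setminus W}$ provides the lower bound $h_\lambda(s) \geq \mathsf{e}^{2\lambda}\norm{u_s}^2_{\mathrm{L}^2(M\setminus W;\,S)}$. Combining these two estimates and sending $\lambda \to \infty$ forces $\norm{u_s}_{\mathrm{L}^2(M\setminus W;\,S)} = 0$ whenever $|s| < 1/C$, yielding the proposition with $\epsilon := 1/C$. The main obstacle is the technical justification of the derivative formula for $h_\lambda$: one must verify that $\M{\mathsf{e}^{2\lambda\phi}}$ maps $\dom \overline{D}$ into itself, and identify $[\overline{D}, \M{\mathsf{e}^{2\lambda\phi}}]$ as the bounded closure of the multiplication operator $[D, \M{\mathsf{e}^{2\lambda\phi}}]$ originally defined on $\Gamma^{\infty}_{c}(M;S)$ via \Cref{commDMisbdd} --- a routine but careful approximation argument akin to those used throughout \Cref{sec:unbddops}.
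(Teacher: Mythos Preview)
Your argument is correct and is a genuinely different route from the paper's. The paper, following \cite{HigRoe:KHom}, uses a \emph{moving level-set} argument: choosing $g\in C_c^\infty(M,[0,1])$ with $g\equiv 1$ on $K$ and $g\equiv 0$ off $W$, a monotone $f\in C^\infty(\RR,[0,1])$ with $f(t)=1$ iff $t\geq 1$, and $c>\norm{[D,\M{g}]}$, one sets $h_s=f(g+cs)$ and shows via the identity $[D,\M{h_s}]=\tfrac{1}{c}\M{\dot h_s}[D,\M{g}]$ that $s\mapsto \inner{h_s u_s}{u_s}$ is nondecreasing. Since $h_0\equiv 1$ on $\mathsf{supp}(u)$ and $\mathsf{e}^{\mathbf{i} sD}$ is unitary, one gets $\inner{h_s u_s}{u_s}\geq\norm{u_s}^2$, which together with $h_s\leq 1$ forces $h_s u_s=u_s$ pointwise and hence $\mathsf{supp}(u_s)\subseteq\{h_s=1\}\subseteq W$ for $0\leq s<1/c$.

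By contrast, you run the classical exponentially weighted energy estimate: a single differential inequality plus Gronwall, followed by the limit $\lambda\to\infty$. Both arguments hinge on the same ingredient, namely the boundedness of $\sigma_D(\,\cdot\,,d\phi)$ (equivalently $[D,\M{g}]$) from \Cref{commDMisbdd}, and both yield the same $\epsilon$ as the reciprocal of that bound. The paper's method is slightly more self-contained---no Gronwall, no auxiliary limit---and delivers the support conclusion in one stroke from a monotonicity; your method is the standard PDE approach and makes the mechanism (exponential growth of weighted energy at rate $\leq C$) more transparent. The domain issue you flag (that $\M{\mathsf{e}^{2\lambda\phi}}$ preserves $\dom\overline{D}$ and that the commutator identity survives closure) is present in both proofs and is handled the same way, via \Cref{commDMisbdd} and an approximation.
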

\begin{proof}
	We will follow the proof given in \cite{HigRoe:KHom}. 
	Let~$g\in C^\infty_{c} (M, [0,1])$ be such that
	\[
		g|_{K} \equiv 1
		\quad\;\text{ and }\;\quad
		g|_{M\setminus W} \equiv 0.
	\]
	Pick~$f\in C^\infty (\RR,[0,1])$ non\-/decreasing such that
	\[
		\text{ for } t<1:\, f(t)<1,
				\quad\;\text{ and }\;\quad
		 \text{ for } t\geq 1:\,	f(t)=1.
	\]
	We have shown in \Cref{commDMisbdd} that~$\comm{D}{\M{g}}$ is bounded (even on all of~$\mathrm{L}^2 (M;S)$ since~$g$ is compactly supported), so let \mbox{$c > \norm{\comm{D}{\M{g}}}$.} We use this to define for~$s\in\RR^+$ and~$p\in M$:
	\[
		h_{s} (p) := f\bigl( g(p) + cs\bigr),
		\quad
		L_{s} := \{p\in M\,\vert\, h_{s} (p) =1\}.
	\]
	We will deal with positive~$s$ only; for negative~$s$, do the same construction for~$-D$.
	\begin{claim}
		If~$t\leq s$, then~$L_{t} \subseteq L_{s}$.
	\end{claim}
\begin{Claimproof}
		An element~$p$ is in~$L_{t}$ exactly if~$f\bigl( g(p) + ct\bigr) = 1$. By choice of~$f$, this means~$g(p) + ct \geq 1$. As~$s\geq t$ and~$c$ is positive, this implies~$g(p) + cs \geq 1$ also, hence \mbox{$f\bigl( g(p) + cs\bigr) = 1$.} Therefore,~$p\in L_{s}$.
\end{Claimproof}

	\begin{claim}\label{claim:Lvlsetcont}
		For~$0\leq s< \frac{1}{c}$, we have~$K\subseteq L_{0} \subseteq L_{s} \subseteq W$.
	\end{claim}
	\begin{Claimproof}
		For the first inclusion, use~$g|_{K} \equiv 1$ to see \mbox{$f( g(p)) = 1$} for any \mbox{$p\in K$} by choice of~$f$. The second inclusion follows from the above computation. For the last inclusion, recall that, if~$p\notin W$, then~$g(p) = 0$ by choice of~$g$. Since~$cs < 1$ by choice of~$s$, we therefore have~$h_{s} (p) = f( cs ) < 1$ by choice of~$f$.
	\end{Claimproof}
	
	Let us write~$\dot{h}_{s}$ to denote
	\begin{equation*}
		\dot{h}_{s} (p) := \partial_{s} \bigl( s\mapsto h_{s} (p) \bigr)_{|s}
		=
		c f' \bigl( g(p) + cs\bigr)
	\end{equation*}
	for $p\in M$.	
	Since~$c$ is positive and~$f$ is non\-/decreasing, we have~$\dot{h}_{s} (p) \geq 0$ for all~$s$ and~$p$.
	
	\begin{claim}\label{claim:CommutatorRelation}
		$\comm{D}{\M{h_{s}}} = \frac{1}{c} \M{\dot{h}_{s}}\comm{D}{\M{g}}.$
	\end{claim}
\begin{Claimproof}
			For~$D$ locally as in \Cref{DiffOpLook}, we have shown in \Cref{eq:commDM} that
			\[
			\qquad\bigl(\comm{D}{\M{h_{s}}} u\bigr)(p)
			=
			\sum_{j=1}^{n}
			\partial_{j} \bigl( h_{s} \circ\varphi^\mi\bigr)_{|\varphi (p)} \cdot 
			\Psi^\mi
			\left(
			p,
			A^j (p) 
			\cdot  (\psi  \circ u (p))
			\right),
			\]
			and similarly,
			\[
			\qquad\left(
			\frac{1}{c} \M{\dot{h}_{s}}\comm{D}{\M{g}} u
			\right) (p)
			=
			\frac{1}{c} {\dot{h}_{s}} (p)
			\sum_{j=1}^{n}
			\partial_{j} \bigl( g \circ\varphi^\mi\bigr)_{|\varphi (p)}\!\! \cdot \!
			\Psi^\mi
			\left(
			p,
			A^j (p) 
			\!\cdot\!  (\psi  \circ u (p))
			\right).
			\]
			If we write \mbox{$h_{s} = f\circ k_{s}$} where \mbox{$k_{s}(p) := g(p) + cs$,} then the chain rule gives
			\begin{align}
				\qquad\partial_{j} \bigl( h_{s} \circ\varphi^\mi\bigr)_{|\varphi (p)} \!
				=
				f' \bigl(k_{s}(p)\bigr)\,
				\partial_{j} \bigl(k_{s} \circ\varphi^\mi\bigr)_{|\varphi (p)} \!
				=
				\frac{1}{c}
				{\dot{h}_{s}} (p)\,
				\partial_{j} \bigl(g \circ\varphi^\mi\bigr)_{|\varphi (p)}
				\notag
			\end{align}
			for each~$1\leq j\leq n$, which implies the claim.
		\end{Claimproof}

	Because of \Cref{claim:CommutatorRelation}, we have
	\[
		\M{\dot{h}_{s}}\!-\mathbf{i} \comm{D}{\M{h_{s}}}
		=
		\frac{1}{c}\M{\dot{h}_{s}}
		\left(c-\mathbf{i} \comm{D}{\M{g}}\right).
	\]
	By choice of~$c$, we see that
	\[
		c\cdot 1 \geq \norm{\mathbf{i} \comm{D}{\M{g}}}\cdot 1 \geq \mathbf{i} \comm{D}{\M{g}},
	\]
	so~$c-\mathbf{i} \comm{D}{\M{g}} \geq 0.$	By \Cref{commDMisbdd},~$\comm{D}{\M{g}}$ is a multiplication operator, so it commutes with~$\M{\dot{h}_{s}}$. As~$\dot{h}_{s}$ is non\-/negative, we have therefore shown that 
	\begin{align}\label{ThingPositive}
		\M{\dot{h}_{s}}\!-\mathbf{i} \comm{D}{\M{h_{s}}}
		\geq 0.
	\end{align}
	Since it suffices to prove the \namecref{10.3.1} for~$u\in \Gamma^\infty (K;S)$, fix such~$u$ and define~\mbox{$u_{s} := \mathsf{e}^{\mathbf{i} s D} u$.} Since~$(\partial_{s} u_{s})_{|s} = \mathbf{i} D u_{s}$, we have
	\begin{align}
		\partial_{s} \inner{h_{s} \cdot u_{s}}{u_{s}}_{|s}
		&=
		\inner{\partial_{s} (h_{s}\cdot u_{s})_{|s}}{u_{s}}
		+
		\inner{h_{s}\cdot u_{s}}{(\partial_{s} u_{s})_{|s}}\notag\\
		&=
		\inner{\dot{h}_{s}\cdot u_{s} + h_{s} \cdot \mathbf{i} D u_{s}}{u_{s}}
		+
		\inner{h_{s}\cdot u_{s}}{\mathbf{i} D u_{s}}\notag\\
		&=
		\inner{\dot{h}_{s}\cdot u_{s} +  \mathbf{i} h_{s} \cdot D u_{s}}{u_{s}} - \inner{\mathbf{i}  D (h_{s}\cdot u_{s})}{u_{s}} \tag*{as~$D\subseteq D^*$}\\
		&=
		\inner{
			\left(
				\M{\dot{h}_{s}} -  \mathbf{i} \comm{D}{\M{h_{s}}}
			\right)
				u_{s}}{u_{s}}
		\geq 0
		\tag*{by \Cref{ThingPositive}.}
	\end{align}
	This means that~$\inner{h_{s} \cdot u_{s}}{u_{s}}$ is an increasing function, and in particular for~$s\geq 0$
	\begin{align*}
		\inner{h_{s} \cdot u_{s}}{u_{s}} \geq \inner{h_{0} \cdot u_{0}}{u_{0}} = \inner{h_{0} \cdot u}{u}
		\stackrel{(*)}{=}
		\inner{u}{u} = \inner{u_{s}}{u_{s}},
	\end{align*}
	where~$(*)$ holds because~$h_{0} = f\circ g$ is~$1$ on~$K\supseteq\mathsf{supp}(u)$, and the last equality comes from~$\mathsf{e}^{\mathbf{i} s D}$ being a unitary. Since~$1\geq h_{s} \geq 0$, this means
	\begin{align*}
		\norm{u_{s}}^2_{2} \geq \norm{\sqrt{h_{s}} u_{s}}^2_{2} = \inner{h_{s} \cdot u_{s}}{u_{s}} 
		\geq \inner{u_{s}}{u_{s}} = \norm{u_{s}}^2_{2}.
	\end{align*}
	Therefore,
	\begin{align*}
		\int\limits_M \norm{u_{s}(p)}^2_{S_{p}}\,\mathrm{d}\mu  = \int\limits_M \norm{\sqrt{h_{s}}(p) u_{s}(p)}^2_{S_{p}}\,\mathrm{d}\mu .
	\end{align*}
	Again, since~$1\geq h_{s} \geq 0$, we have~$\norm{u_{s}(p)}^2_{S_{p}}\geq \norm{\sqrt{h_{s}}(p) u_{s}(p)}^2_{S_{p}}$, and hence the equality of integrals implies
	\[
		\Sinner{u_{s}(p)}{u_{s}(p)}{p}\!
		=
		\Sinner{\sqrt{h_{s}}(p) \cdot u_{s}(p)}{\sqrt{h_{s}}(p) \cdot u_{s}(p)}{p}\!\!\!\!\!\!,
		\quad
		\text{so }
		\norm{\sqrt{1-h_{s}(p)}u_{s}(p)}_{S_{p}}\!\!=0.
	\]
	(This equality is actually true not only almost everywhere but for all~$p\in M$ since we are dealing with smooth functions.) This implies that~$h_{s} u_{s} = u_{s}$. In particular,~$\mathsf{supp}(u_{s})$ has to be contained in the set on which~$h_{s}$ is~$1$, that is,
	\begin{align*}
		\mathsf{supp}\left(\mathsf{e}^{\mathbf{i} s D} u\right)
		=
		\mathsf{supp}(u_{s}) \subseteq L_{s} \stackrel{}{\subseteq} W
	\end{align*}
	for~$s< \frac{1}{c}$ by \Cref{claim:Lvlsetcont}.	
	This finishes the proof of \Cref{10.3.1}.
\end{proof}
	
\setcounter{claim}{0}
\begin{cor}[{\cite[Cor.~10.3.3]{HigRoe:KHom}}]\label{lem:10.3.3}
	Let $D$ be an essentially self\-/adjoint differential operator on a manifold $M$. Let $f_{1}, f_{2}$ be bounded functions on $M$ with disjoint supports, and suppose $\mathsf{supp}(f_{2})$ is compact. Then there exists $\epsilon >0$ such that
	\[
		\forall  \abs{s} < \epsilon : \quad \M{f_{1}}\circ \mathsf{e}^{\mathbf{i} s D} \circ \M{f_{2}} =0.
	\]
\end{cor}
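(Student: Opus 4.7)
The plan is to apply \Cref{10.3.1} directly, invoking the finite propagation speed of $\mathsf{e}^{\mathbf{i} s D}$. Set $K:=\mathsf{supp}(f_{2})$, which is compact by hypothesis, and let $W:=M\setminus\mathsf{supp}(f_{1})$, which is open since the support of any function is closed. Because $f_{1}$ and $f_{2}$ have disjoint supports, $W$ is an open neighbourhood of $K$. Since $M$ is locally compact, we can choose a compact set $K'$ with $K\subseteq\mathrm{int}(K')\subseteq K'\subseteq W$; then $W$ is an open neighbourhood of the compact set $K'$ as well.

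Applying \Cref{10.3.1} to the compact set $K'$ and the open neighbourhood $W$ produces some $\epsilon>0$ such that for every $|s|<\epsilon$ and every $u\in\mathrm{L}^2(K';S)$, the section $\mathsf{e}^{\mathbf{i} s D}u$ is supported in $W$. Next, for any $v\in\mathrm{L}^2(M;S)$, the section $\M{f_{2}}v$ vanishes $\mu$-almost everywhere outside $K$, and hence can be approximated in $\mathrm{L}^2$-norm by smooth sections with compact support contained in $\mathrm{int}(K')$ (a standard cutoff\-/and\-/mollify argument, very much in the spirit of the ones appearing in the proofs of \Cref{AdjDPresveresSupp} and \Cref{Fmolli}). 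Thus $\M{f_{2}}v$ represents an element of $\mathrm{L}^2(K';S)$ inside $\mathrm{L}^2(M;S)$.

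Combining these two observations, for every $|s|<\epsilon$ and every $v\in\mathrm{L}^2(M;S)$ the section $\mathsf{e}^{\mathbf{i} s D}\M{f_{2}}v$ is supported in $W=M\setminus\mathsf{supp}(f_{1})$. Since $f_{1}$ vanishes on $W$, multiplication by $f_{1}$ annihilates any section supported there, and we conclude $\M{f_{1}}\mathsf{e}^{\mathbf{i} s D}\M{f_{2}}v=0$, which is the claim. The argument is essentially a direct corollary of \Cref{10.3.1}; the only minor obstacle is the bookkeeping needed to view $\M{f_{2}}v$ as an element of $\mathrm{L}^2(K';S)$ rather than of the ambient $\mathrm{L}^2(M;S)$, which is handled cleanly by the slight enlargement of $K$ to $K'$.
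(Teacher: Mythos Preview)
Your proof is correct and follows the same route as the paper's: set $K=\mathsf{supp}(f_{2})$, $W=M\setminus\mathsf{supp}(f_{1})$, invoke \Cref{10.3.1}, and conclude. The paper applies \Cref{10.3.1} directly to $K$ and simply asserts that $\M{f_{2}}u\in\mathrm{L}^2(K;S)$; your intermediate enlargement to $K'$ with $K\subseteq\mathrm{int}(K')$ is a legitimate extra bit of care, since $\mathrm{L}^2(K;S)$ was defined as a completion of smooth sections supported in $K$ and $f_{2}$ is merely bounded, so membership of $\M{f_{2}}v$ is not entirely automatic without the mollification argument you sketch.
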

\begin{proof}
	By assumption, $K:= \mathsf{supp}(f_{2})$ is compact.	Since the support of $f_{1}$ is disjoint from $K$, the set $W:= M\setminus \mathsf{supp}(f_{1})$ is an open neighborhood of $K$. By \Cref{10.3.1}, there exists an $\epsilon >0$ such that
	\[
		\forall  \abs{s} < \epsilon, 
		\forall  v \in \mathrm{L}^2 (K;S),
		\quad
		\mathsf{supp}
		\left(
			\mathsf{e}^{\mathbf{i} s D} v
		\right)
		\subseteq W.
	\]
	For any $u\in\mathrm{L}^2 (M;S)$, we know that $\M{f_{2}}u$ is supported in $K$, so $\mathsf{e}^{\mathbf{i} s D} \M{f_{2}}u$ is supported in $W$. As $W= M\setminus \mathsf{supp}(f_{1})$, we hence get
	\[
	 	\M{f_{2}}\mathsf{e}^{\mathbf{i} s D} \M{f_{2}}u = 0
	\]
	for all $u\in\Gamma^\infty(M;S)$.
\end{proof}
\begin{lem}[Kasparov's lemma; {\cite[5.4.7]{HigRoe:KHom}} - without proof]\label{KasparovsLemma}
	Suppose $X$ is compact Hausdorff, $\nu:C(X)\to \B(\mathcal{H})$ a non\-/degenerate representation, and $T\in\B(\mathcal{H})$. If $\nu(f_{1}) T \nu(f_{2})$ is compact for every $f_{1},f_{2}\in C(X)$ with disjoint support, then $\comm{T}{\nu(f)}$ is compact for every $f\in C(X)$.
\end{lem}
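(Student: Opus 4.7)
The plan is to show that $A_0 := \{f\in C(X) : \comm{T}{\nu(f)}\in\K(\mathcal{H})\}$ coincides with $C(X)$. A routine check confirms that $A_0$ is a norm-closed unital $*$-subalgebra of $C(X)$: the Leibniz identity together with the fact that $\K(\mathcal{H})$ is a two-sided ideal in $\B(\mathcal{H})$ gives closure under products; continuity of $f\mapsto \comm{T}{\nu(f)}$ and norm-closedness of $\K(\mathcal{H})$ give closure under norm limits; $\comm{T}{\nu(1)}=0$ handles the unit; and $\comm{T}{\nu(\bar f)}^{*}=-\comm{T^{*}}{\nu(f)}$, together with the observation that the hypothesis is symmetric in $T$ and $T^{*}$ (adjoints of compacts are compact), handles the involution.

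To prove $A_0 = C(X)$, fix $f \in C(X)$ and $\varepsilon > 0$. Exploiting uniform continuity of $f$ on the compact Hausdorff space $X$, I would choose a finite open cover $\{U_i\}_{i=1}^n$ of bounded multiplicity $M$ with a subordinate partition of unity $\{\rho_i\}$, refined enough that $f$ varies by at most $\varepsilon$ on each star $\bigcup_{j:U_i\cap U_j\neq\emptyset}U_j$. Setting $c_i := f(x_i)$ for chosen $x_i \in U_i$ and $g := \sum_i c_i \rho_i$, one has $\supnorm{f-g} \leq \varepsilon$ and hence $\norm{\comm{T}{\nu(f)}-\comm{T}{\nu(g)}} \leq 2\norm{T}\varepsilon$. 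Because $\nu$ is non-degenerate and $X$ is compact, $\sum_i \nu(\rho_i) = \nu(1) = \mathrm{id}_{\mathcal{H}}$, yielding the telescoping identity
\[
\comm{T}{\nu(g)} \;=\; \sum_{i,j} (c_i-c_j)\, \nu(\rho_j)\,T\,\nu(\rho_i).
\]
Pairs $(i,j)$ with disjoint supports contribute operators that are compact by hypothesis, and there are only finitely many such terms, so their total $K_\varepsilon$ is compact. For overlapping pairs the star condition forces $|c_i-c_j| < \varepsilon$.

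The main obstacle is to bound the overlap sum in operator norm by $O(\varepsilon\norm{T})$, since a naive triangle inequality would blow up with $n$. I would regroup the overlap sum as $\sum_j \nu(\rho_j)\,T\,\nu(h_j)$ where $h_j := \sum_{i:(i,j)\text{ overlap}} (c_i-c_j)\rho_i$, noting that $\supnorm{h_j} \leq \varepsilon$ and that the bounded multiplicity yields the pointwise bound $\sum_j|h_j(x)|^2 \leq M\varepsilon^2$ (since at most $M$ of the $h_j(x)$ are nonzero). A Cauchy--Schwarz estimate against arbitrary $\xi,\eta\in\mathcal{H}$, using the positivity identities $\sum_j \nu(\rho_j)^2 \leq \sum_j \nu(\rho_j) = \mathrm{id}_{\mathcal{H}}$ and $\sum_j \nu(h_j)^{*}\nu(h_j) = \nu(\sum_j|h_j|^2)$, then yields $\norm{\sum_j \nu(\rho_j) T \nu(h_j)} \leq \sqrt{M}\,\norm{T}\,\varepsilon$. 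Consequently $\comm{T}{\nu(f)}$ lies within distance $O(\varepsilon)$ of the compact operator $K_\varepsilon$ for every $\varepsilon > 0$, and norm-closedness of $\K(\mathcal{H})$ yields $\comm{T}{\nu(f)}\in\K(\mathcal{H})$.
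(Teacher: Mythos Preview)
The paper does not prove this lemma; it is merely quoted from \cite[5.4.7]{HigRoe:KHom} (note the ``without proof'' in the label). So there is no paper argument to compare against, and I assess your proposal on its own merits.

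Your telescoping identity and Cauchy--Schwarz scheme are sound, but there is a real gap at the multiplicity step. You assume that a general compact Hausdorff space admits arbitrarily fine open covers of multiplicity bounded by a fixed $M$, and then claim that at most $M$ of the $h_j(x)$ are nonzero. Neither holds in general. The first would force $X$ to have finite covering dimension, which fails e.g.\ for the Hilbert cube. The second conflates two different quantities: multiplicity bounds the number of $U_i$ containing a given point $x$, whereas $h_j(x)\neq 0$ only requires that \emph{some} $U_i$ containing $x$ meets $U_j$; the number of such $j$ is a second-order intersection count and can be much larger than $M$. Without control on this, your $\sqrt{M}\,\varepsilon$ bound need not tend to zero.

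The standard remedy, and the one Higson--Roe use, is to build the partition of unity out of the level sets of $f$ itself rather than from an abstract cover of $X$. Reduce to real-valued $f$ with $0\leq f\leq 1$, take a partition of unity $\{g_k\}_{k=0}^{n}$ on $[0,1]$ with $\mathsf{supp}(g_k)\subseteq\bigl((k-1)/n,(k+1)/n\bigr)$, and set $\rho_k:=g_k\circ f\in C(X)$. Then $\rho_k\rho_l=0$ whenever $|k-l|\geq 2$, so the overlap pattern is one-dimensional with multiplicity $2$ automatically, irrespective of the topology of $X$. With this choice your identity and Cauchy--Schwarz estimate go through verbatim, the disjoint pairs are compact by hypothesis, and the overlap sum is genuinely $O(\varepsilon\norm{T})$.
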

\begin{prop}[special case of {\cite[Lemma~10.6.4]{HigRoe:KHom}}]\label{lem:CommutatorCpct}
	If $D$ a symmetric elliptic differential operator on a compact manifold $M$, $\chi$ a normalizing function, and $f\in C (M)$, then $\comm{\chi(D)}{\M{f}}$ is compact.
\end{prop}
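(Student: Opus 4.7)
The plan is to invoke Kasparov's lemma (\Cref{KasparovsLemma}) to reduce the claim to showing that $\M{f_{1}}\chi(D)\M{f_{2}}$ is compact whenever $f_{1}, f_{2}\in C(M)$ have disjoint supports. Since $M$ is compact, $\M{1_{M}} = \mathrm{id}_{\mathcal{H}}$, so the representation $\M{}$ is non\-/degenerate as required. I will actually prove more: for a well\-/chosen normalizing function $\chi_{0}$, the product $\M{f_{1}}\chi_{0}(D)\M{f_{2}}$ vanishes identically.

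To select $\chi_{0}$, I first use \Cref{lem:10.3.3} to obtain an $\epsilon > 0$ such that $\M{f_{1}}\mathsf{e}^{\mathbf{i} s D}\M{f_{2}} = 0$ for all $\abs{s} < \epsilon$. Then, via \Cref{ChiExists}, I pick a normalizing function $\chi_{0}$ whose distributional Fourier transform is supported in $\left(-\tfrac{\epsilon}{2\pi},\tfrac{\epsilon}{2\pi}\right)$. For smooth compactly supported $u, v$, I apply \Cref{lem:10.3.5} (after an approximation step to handle the non\-/smoothness of $\M{f_{2}} u$ and $\M{\overline{f_{1}}} v$) to write
\[
\biginner{\M{f_{1}}\chi_{0}(D)\M{f_{2}}\,u}{v}
=
\Bigiinner{\hat{\chi}_{0}}{s \mapsto \inner{\M{f_{1}}\mathsf{e}^{2\pi\mathbf{i} s D}\M{f_{2}}u}{v}}.
\]
By the propagation bound, the function inside the distributional pairing vanishes on the open interval $\left(-\tfrac{\epsilon}{2\pi},\tfrac{\epsilon}{2\pi}\right)$, which contains $\mathsf{supp}(\hat{\chi}_{0})$; since the support of that function is therefore disjoint from $\mathsf{supp}(\hat{\chi}_{0})$, the pairing vanishes, so $\M{f_{1}}\chi_{0}(D)\M{f_{2}} = 0$ as a bounded operator.

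For the given normalizing function $\chi$, the difference $\chi - \chi_{0}$ lies in $C_{0}(\RR)$, since both functions tend to $\pm 1$ at $\pm\infty$, so \Cref{prop:ctsFuncCalcCpct} implies that $\chi(D) - \chi_{0}(D)$ is compact. Therefore
\[
\M{f_{1}}\chi(D)\M{f_{2}}
=
\M{f_{1}}\bigl(\chi(D)-\chi_{0}(D)\bigr)\M{f_{2}}
\]
is compact, and \Cref{KasparovsLemma} completes the proof. The main obstacle is justifying the use of \Cref{lem:10.3.5}, whose statement requires smooth compactly supported arguments, while $\M{f_{2}}u$ and $\M{\overline{f_{1}}}v$ are only continuous with compact support in general; one resolution is to approximate $f_{1}, f_{2}$ in the uniform norm by smooth functions whose supports are slightly thickened but still disjoint (yielding a uniform positive lower bound on $\epsilon$ in the propagation estimate) and to pass to the limit using operator norm continuity of $f\mapsto \M{f}$ together with boundedness of $\chi_{0}(D)$.
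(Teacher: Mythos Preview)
Your proof is correct and follows essentially the same route as the paper: reduce via Kasparov's lemma, use finite propagation speed (\Cref{lem:10.3.3}) to choose a normalizing function with suitably small Fourier support (\Cref{ChiExists}), apply \Cref{lem:10.3.5} to conclude that $\M{f_{1}}\chi_{0}(D)\M{f_{2}}=0$, and then use \Cref{prop:ctsFuncCalcCpct} to pass from $\chi_{0}$ back to $\chi$. The only difference is that you explicitly flag and address the smoothness issue in applying \Cref{lem:10.3.5} to $f_{2}\cdot u$ and $\overline{f_{1}}\cdot v$, whereas the paper applies it without comment; your approximation argument (thicken the supports slightly while keeping them disjoint, so the propagation $\epsilon$ is uniform) is a clean way to fill that gap.
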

\begin{proof}[Proof of \Cref{lem:CommutatorCpct}]
	Since $\mathsf{M}$ is a non\-/degenerate representation of $C(M)$ on $\mathrm{L}^2(M;S)$,
	\hyperref[KasparovsLemma]{Kasparov's lemma}  says that it suffices to show that, for all $f_{1}, f_{2}\in C(M)$ with disjoint supports, $\M{f_{1}} \chi(D) \M{f_{2}}$ is compact. Moreover, because of \Cref{lem:CpctPert}, we can actually show this for any normalizing function, and do not need to use the given $\chi$.
	
	So let us fix such $f_{1}, f_{2}$. By \Cref{lem:10.3.3}, there exists $\epsilon >0$ such that
	\[
		\forall  \abs{s} < \epsilon : \quad \M{f_{1}} \mathsf{e}^{2\pi\mathbf{i} s D} \M{f_{2}} =0.
	\]
	By \Cref{ChiExists}, we can take a normalizing function $\chi_{1}$ with $\mathsf{supp}(\hat{\chi}_{1})\subseteq (-\epsilon,  \epsilon)$. We then get by \Cref{lem:10.3.5} that, for all $\tilde{u}, \tilde{v}\in \Gamma^\infty (M;S)$ and ${g}(s):= \inner{\mathsf{e}^{2\pi\mathbf{i} sD}\tilde{u}}{\tilde{v}}$,
	\begin{align}\label{foo}
		\biginner{\chi_{1}(D)\tilde{u}}{\tilde{v}} 
		=
		\iinner{\hat{\chi}_{1}}{g}
		.
	\end{align}
	
	If we choose $\tilde{u}:= f_{2} \cdot u$ and $\tilde{v}:= \overline{f_{1}} \cdot v$ for $u, v\in \Gamma^\infty (M;S)$, then
	\[
	{g}(s)
	= \inner{\mathsf{e}^{2\pi\mathbf{i} sD}(f_{2} \cdot {u})}{\overline{f_{1}}\cdot{v}}
	=
	\inner{\M{f_{1}}\circ\mathsf{e}^{2\pi\mathbf{i} sD}\circ \M{f_{2}} ({u})}{{v}},
	\]
	so that $g(s) = 0$ for $\abs{s} < \epsilon$ by choice of $\epsilon$, and hence
	\[
	\iinner{\hat{\chi}_{1}}{{g}} = 0 \quad\text{ as }  \quad\mathsf{supp}(\hat{\chi}_{1})\subseteq (-\epsilon,  \epsilon).
	\]
	Thus, \Cref{foo} gives 
	$\biginner{\M{f_{1}}\chi_{1}(D)\M{f_{2}}{u}}{{v}}= 0$.	
	We conclude that the same even holds for $u, v\in \mathrm{L}^2 (M; S)$, so that we have proved $\M{f_{1}}\chi_{1}(D)\M{f_{2}}=0$.
\end{proof}

Finally, we can prove \Cref{thm:main}.
\begin{proof}[Proof of \Cref{thm:main}]\label{pf:thm-main}
	$F$ is self\-/adjoint by \Cref{FuncCalc-selfadj} of Functional Calculus because $\chi$ is real\-/valued. 
	Since $\chi$ is a normalizing function, $\chi^2 - 1 \in C_{0} (\RR)$, so \Cref{prop:ctsFuncCalcCpct} implies that $(\chi^2 - 1)(D) = F^2 -1$ is compact.
	\Cref{lem:CommutatorCpct} says $\comm{\chi(D)}{\M{f}}=\comm{F}{\M{f}}$ is compact for any $f\in C(M)$, so we have shown that the properties of a Fredholm module are satisfied.
	Lastly, if $\chi_{1}$ is another normalizing function, then by \Cref{lem:CpctPert} again, $\chi_{1}(D)$ differs from $\chi(D)$ only by a compact operator. This means that \mbox{$(\M{}, \mathcal{H}, \chi_{1}(D))$} is a compact perturbation of \mbox{$(\M{}, \mathcal{H}, F)$.} Therefore, they determine the same $\Kth$\=/homology class by \Cref{CompactPertImpliesHomotopic}.	
\end{proof}

%
%
\begin{rmk}
	There is an obvious extension of \Cref{thm:main} to even $\Kth$\-/homology: if $S$ is equipped with a smooth idempotent vector bundle automorphism $\gamma_S$ (that is, $S$ is  $\mathbb{Z}/ {2} \mathbb{Z}$\=/graded), then the map
	\[
		\gamma \colon  \Gamma_{c}^\infty (M;S)\to \Gamma_{c}^\infty (M;S),\quad
		\gamma u (p) := \gamma_S\bigl(u(p)\bigr),
	\]
	extends to a grading operator of $\mathcal{H} = \mathrm{L}^2 (M; S)$ with respect to which the left $C(M)$\=/action is even. If we further assume that $D$ is odd
	, then \Cref{lem:10.6.2} implies that $F$ is odd as well, so that the Fredholm module $(\M{}, \mathcal{H}, F)$ is actually graded. Again, the corresponding class in $\Kth^{0} ( C(M))$ only depends on $D$.
\end{rmk}

\begin{example}\label{ex:Dirac-on-mfd-falls-into-main-Thm}
	As discussed in \Cref{ex:Dirac-on-mfd-is-symmetric} and \Cref{ex:Dirac-is-elliptic}, the Dirac operator $\Dirac{M}$ of a spin$^c$ manifold $M$ is an unbounded, symmetric elliptic differential operator. Moreover, in case the dimension of the manifold is even, the spinor bundle is actually graded and $\Dirac{M}$ is an odd operator. Consequentially, if the manifold is compact, $\Dirac{M}$ determines a class $[\Dirac{M}]$ in the even or odd $\Kth$-homology of $C(M)$, depending on whether $\dim(M)$ is even or odd.
\end{example}

An interesting consequence of \Cref{thm:main} is that it gives rise to maps on $\Kth$-theory: if
\begin{align}
		\langle\,\cdot\,,\,\cdot\,\rangle\colon 
		\Kth_{j}(A)\times \Kth^{j}(A) \to \ZZ,
		\tag{$j=0, 1$}
\end{align}
denotes the index paring as defined in \cite[Prop.~8.7.1 and 8.7.2]{HigRoe:KHom}, then any symmetric elliptic differential operator $D$ on a smooth and compact manifold $M$ gives rise to a map
\[
	\Kth_{1} ( C(M)) \to \ZZ,
	\quad
	x\mapsto \langle x,[D]\rangle,
\]
by pairing a $\Kth$\-/theory class with the $\Kth$\-/homology class $[D]$ constructed above. If the vector bundle $S$ over $M$ which is underlying $D$ is graded and if $D$ is odd, then we get a map
\[
	\Kth_{0} ( C(M)) \to \ZZ,
	\quad
	x\mapsto \langle x,[D]\rangle.
\]

\section*{Appendix}

In order to prove \Cref{Fmolli}, we first need the following version for $\RR^n$:

\begin{lem}
	There exist operators $\tilde{F}_{t} : \mathrm{L}^2 (\RR^n)\to \mathrm{L}^2 (\RR^n)$ such that	
	\begin{enumerate}[label=\textup{(\alph*)}]
		\item\label[property]{tildeFt:norm} $\norm{\tilde{F}_{t}}\leq 1$,
		\item\label[property]{tildeFt:1} $\forall  u\in \mathrm{L}^2 (\RR^n): \, \lim_{t\to 0} \tilde{F}_{t} u = u$ in $\mathrm{L}^2 (\RR^n)$, 
		\item\label[property]{tildeFt:SmoothCpct} $\forall  u\in \mathrm{L}^2 (\RR^n):$ $\tilde{F}_{t} u$ is smooth,
		\item\label[property]{tildeFt:supp} if $u$ has compact support, then so does $\tilde{F}_{t} u$, and
		\item\label[property]{tildeFt:comm} for all $1\leq k\leq n$ and $f\in C^{\infty}(\RR^n)$ with bounded partial derivatives, the operator $\comm{f\cdot\frac{\partial}{\partial x_{k}}}{\tilde{F}_{t}}$ extends to a bounded operator whose norm is bounded independent of $t$. 
	\end{enumerate}
\end{lem}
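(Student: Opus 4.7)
The plan is to define the operators by convolution with a suitably scaled bump function. Fix $\phi\in C_c^\infty(\RR^n,[0,\infty))$ with $\int \phi\,\mathrm{d}\lambda = 1$, set $\phi_t(x):= t^{-n}\phi(x/t)$, and define
\[
	\tilde F_t u := u\ast \phi_t, \qquad t>0.
\]
Since $\|\phi_t\|_{\mathrm{L}^1} = 1$, Young's inequality gives \Cref{tildeFt:norm}, and the standard approximation-of-the-identity argument from \cite[Thm.~8.14]{Folland:RA} gives \Cref{tildeFt:1}. \Cref{tildeFt:SmoothCpct} follows from differentiating under the integral sign, and \Cref{tildeFt:supp} from the fact that $\mathsf{supp}(u\ast\phi_t)\subseteq \mathsf{supp}(u)+\mathsf{supp}(\phi_t)$. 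These four points are essentially routine.

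The main obstacle is \Cref{tildeFt:comm}, the commutator estimate. The subtle point is that $f\cdot \partial/\partial x_k$ is a differential operator, so a naive expansion produces $(f\cdot\partial_k u)\ast \phi_t$, which is not directly controlled by $\|u\|_2$. The plan is to integrate by parts to move the derivative off $u$. For smooth compactly supported $u$, a brief computation (integration by parts in the $y$ variable, followed by the product rule on $f(y)\phi_t(x-y)$) yields
\[
	\bigl([f\cdot\tfrac{\partial}{\partial x_k},\tilde F_t] u\bigr)(x)
	= \int_{\RR^n} \bigl(f(x)-f(y)\bigr)\bigl(\partial_k\phi_t\bigr)(x-y)\,u(y)\,\mathrm{d}\lambda(y)
	+ \bigl((\partial_k f)\cdot u\bigr)\ast\phi_t (x).
\]
The second summand is bounded in $\mathrm{L}^2$ by $\supnorm{\partial_k f}\cdot \|u\|_2$ via Young's inequality. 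For the first summand, the mean value theorem bounds $|f(x)-f(y)|\leq \supnorm{\nabla f}\cdot |x-y|$ on the support of $\phi_t(x-y)$, and the change of variables $z=(x-y)/t$ gives
\[
	|x-y|\cdot\bigl|(\partial_k\phi_t)(x-y)\bigr|
	= t^{-n}\bigl|z\bigr|\cdot\bigl|(\partial_k\phi)(z)\bigr|\bigg|_{z=(x-y)/t}
	=: K_t(x-y),
\]
where $K(z):= |z|\cdot|(\partial_k\phi)(z)|$ is an $\mathrm{L}^1$-function with $\|K_t\|_{\mathrm{L}^1} = \|K\|_{\mathrm{L}^1}$ independent of $t$. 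Thus the first summand is also bounded in $\mathrm{L}^2$ by a constant (times $\supnorm{\nabla f}$ and $\|u\|_2$) that does not depend on $t$.

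The commutator, a priori only defined on smooth compactly supported $u$, is therefore bounded with respect to the $\mathrm{L}^2$-norm by a constant independent of $t$, and hence extends to a bounded operator on all of $\mathrm{L}^2(\RR^n)$ satisfying \Cref{tildeFt:comm}. The hard part is really just spotting the integration-by-parts rearrangement above; once the commutator is written as a sum of an honest convolution and a difference-quotient kernel, the $t$-uniform bound falls out of the change of variables.
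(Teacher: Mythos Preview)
Your proof is correct and follows the same construction as the paper: $\tilde F_t$ is convolution with a mollifier, and the commutator in \Cref{tildeFt:comm} is rewritten via integration by parts as an integral operator with kernel $(f(x)-f(y))(\partial_k\phi_t)(x-y)+(\partial_k f)(y)\phi_t(x-y)$. The only difference is in how this kernel is estimated: the paper bounds $|f(x)-f(y)|$ by a constant on the support of $\phi_t$ and invokes Schur's test, whereas you absorb the factor $|x-y|$ from the mean value theorem into a convolution kernel $K_t$ and apply Young's inequality directly---a slightly more streamlined but equivalent endgame.
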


\begin{proof}

Pick a smooth function $\phi\colon \RR^n \to \RR^+$  with compact support and $\int_{\RR^n} \phi \,\mathrm{d}\lambda  = 1$. Define $\phi_{t} (x) := t^{-n} \phi(\frac{x}{t})$, which has the same properties as $\phi$. Set $\tilde{F}_{t} u = \phi_{t} \ast u$ for $u\in \mathrm{L}^2 (\RR^n)$, that is:
\[
	\tilde{F}_{t} u (x)
	=	t^{-n} \int_{\RR^n} \phi \left(\frac{x-y}{t}\right) u(y)\,\mathrm{d}\lambda (y).
\]

By \cite[IV~9.4]{Werner:Ana}, we have
\[
	\norm{\tilde{F}_{t}}\leq \mathsf{sup}\{\norm{\phi_{t}}_{\mathrm{L}^1} \cdot\norm{u}_{2} \,:\, \norm{u}_{2} \leq 1\} = \norm{\phi_{t}}_{\mathrm{L}^1}  = 1,
\]
so \Cref{tildeFt:norm} holds. Moreover, \Cref{tildeFt:1} and 
\ref{tildeFt:SmoothCpct} follow from \cite[Satz~IV 9.5]{Werner:Ana} and \cite[Korollar~IV~9.7]{Werner:Ana} respectively. It is well known that
\begin{align*}\label{tildeFt:supp-2}
	\mathsf{supp}(\tilde{F}_{t} u) \subseteq \overline{\mathsf{supp}(\phi_{t}) + \mathsf{supp}(u)},
\end{align*}
so that \Cref{tildeFt:supp} follows from $\phi_{t}$ having compact support. It remains to check \Cref{tildeFt:comm}:

Using integration by parts and the fact that $\phi$ is compactly supported, we can compute for $u\in\mathrm{L}^2 (\RR^n)$
\[
	\comm{f\cdot\frac{\partial}{\partial x_{k}}}{\tilde{F}_{t}} u (x)
	=
	\int\limits_{y\in \RR^n}
	\left[
		\frac{1}{t^{n+1}}
		\left.
			{\frac{\partial \phi}{\partial x_{k}}}
		\right|_{\frac{x-y}{t}}\!\!
		\bigl(
			f(x)-f(y)
		\bigr)
		+
		\frac{1}{t^{n}}
		\phi
			\left(
				\frac{x-y}{t}
			\right)
		\left.
			{\frac{\partial f}{\partial x_{k}}}
		\right|_{y}
	\right]
	u(y) \,\mathrm{d}y .
\]
In other words, $\comm{f\cdot\frac{\partial}{\partial x_{k}}}{\tilde{F}_{t}}$ is an integral transform with kernel
\[
	k_{t} (x,y)
	=
	\frac{1}{t^{n+1}}
	\left.{\frac{\partial \phi}{\partial x_{k}}}\right|_{\frac{x-y}{t}}
	\bigl(
	f(x)-f(y)
	\bigr)
	+
	\frac{1}{t^{n}}
	\phi
	\left(
	\frac{x-y}{t}
	\right)
	\left.{\frac{\partial f}{\partial x_{k}}}\right|_{y}.
\]
As stated in \cite[Thm.~5.2]{HalSun:BddIntOps}, the so\-/called \textit{Schur's test} says that, if
\[
	\sup_{x\in \RR^n}{\norm{k_{t} (x,\cdot)}_{\mathrm{L}^1}} \leq \alpha
	\;\text{ and }\;
	\sup_{y\in \RR^n}{\norm{k_{t} (\cdot,y)}_{\mathrm{L}^1}} \leq \beta,
\]
then the integral transform extends to a bounded operator whose norm is bounded by $\sqrt{\alpha\beta}$. We claim that, if for all $1\leq j\leq n$ and $\mathsf{supp}(\phi) \subseteq [-a,a]$ we have
\begin{equation*}
	\supnorm{{\tfrac{\partial f}{\partial x_{j}}}} < C,
\end{equation*}
then $\alpha=\beta=C ( n a \norm{\frac{\partial \phi}{\partial x_{k}}}_{\mathrm{L}^1} + \norm{\phi}_{\mathrm{L}^1})$ do the trick.

For $x, y\in\RR^n$ such that $\norm{x-y} \leq a t$, repeated application of the Mean Value Theorem (see the proof of\cite[Thm.~5.3.10]{Trench:RA}, for example) gives
\[
	\abs{f(x) - f(y)}
	\leq 
	\sum_{j=1}^{n}
	a t
	\supnorm{{\frac{\partial f}{\partial x_{j}}}}
	\leq
	n a t C.
\]
For these $x,y$, we compute
\begin{align*}
	\abs{k_{t} (x,y)}
	&\leq
	\frac{1}{t^{n+1}}
	\abs{
		\left.{\frac{\partial \phi}{\partial x_{k}}}\right|_{\frac{x-y}{t}}
		\left(
			f(x)-f(y)
		\right)
	}
	+
	\frac{1}{t^{n}}
	\abs{
		\phi
		\left(
			\frac{x-y}{t}
		\right)
	\left.{\frac{\partial f}{\partial x_{k}}}\right|_{y}
	}
	\\
	&\leq
	\frac{1}{t^{n+1}}
	\abs{	
		\left.
			{\frac{\partial \phi}{\partial x_{k}}}
		\right|_{\frac{x-y}{t}}
	}
\cdot
	natC
	+
	\frac{1}{t^{n}}
	\abs{
		\phi
		\left(
			\frac{x-y}{t}
		\right)
	}
\cdot
	C\\
	&=
	\frac{1}{t^{n}}
	C
	\left[
		\abs{	
			\left.
			{\frac{\partial \phi}{\partial x_{k}}}
			\right|_{\frac{x-y}{t}}
		}
		na
		+
		\abs{
			\phi
			\left(
			\frac{x-y}{t}
			\right)
		}
	\right]
\end{align*}

For all other $x,y$, we have $k_{t} (x,y) = 0$ because $\phi$ is supported within $[-a,a]$. This means that the above calculation and a substitution shows that
\[	
	{\norm{k_{t} (x,\cdot)}_{\mathrm{L}^1}}, {\norm{k_{t} (\cdot,y)}_{\mathrm{L}^1}}
	\leq
	C \left( n a \norm{\frac{\partial \phi}{\partial x_{k}}}_{\mathrm{L}^1} + \norm{\phi}_{\mathrm{L}^1}\right),
\]
and we are done.
\end{proof}

\begin{lem}\label{lem:Cpct-as-nested-Gdelta}
	For $K$ a compact subset of a manifold $M$, we can write $K = \bigcap_{k=1}^{\infty} V_{k}$ for some open sets $V_{k+1}\subseteq V_{k} \subseteq M$.
\end{lem}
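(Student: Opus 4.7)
The plan is to exploit the metrizability of $M$. As a smooth manifold, $M$ is Hausdorff, second countable, and locally Euclidean, hence regular; by Urysohn's metrization theorem it is metrizable (alternatively, one can pick any Riemannian metric on $M$ and use the induced distance function). Fix such a metric~$d$ on~$M$.

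Next I would use the distance-to-$K$ function to produce the nested neighborhoods. Concretely, define for each $k\in\NN$
\[
V_{k} := \left\{ p \in M \,:\, d(p, K) < \tfrac{1}{k} \right\}.
\]
Each $V_{k}$ is open since $p\mapsto d(p,K)$ is continuous, and the containments $V_{k+1}\subseteq V_{k}$ and $K\subseteq V_{k}$ are immediate from $\tfrac{1}{k+1} < \tfrac{1}{k}$ and from $d(p,K)=0$ for $p\in K$.

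Finally, I would verify that $\bigcap_{k=1}^{\infty} V_{k} = K$. The inclusion ``$\supseteq$'' was just noted. For the other direction, if $p \in \bigcap_{k} V_{k}$, then $d(p,K) < \tfrac{1}{k}$ for every~$k$, so $d(p,K)=0$ and hence $p \in \overline{K}$. Since $K$ is compact in the Hausdorff space $M$ it is closed, so $\overline{K}=K$ and $p\in K$.

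There is essentially no obstacle here beyond citing metrizability; the rest is the standard distance-function construction. If one wished to avoid metrizability altogether, a slightly longer route would use paracompactness and regularity of $M$ to inductively produce open sets $V_{k}$ with $K\subseteq V_{k+1}\subseteq \overline{V_{k+1}}\subseteq V_{k}$ and $\bigcap_{k} \overline{V_{k}}=K$, but the metric argument above is the cleanest.
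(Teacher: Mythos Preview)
Your proof is correct. It differs from the paper's approach: the paper does not invoke global metrizability of~$M$ but instead works chart by chart. First it treats the case where~$K$ lies inside a single coordinate chart~$(U,\varphi)$, using the Euclidean metric on~$\RR^n$ to form the open sets $\tilde{V}_k=\bigcup_{x\in\varphi(K)}B_{1/k}(x)$ and then pulling them back via~$\varphi^{-1}$ once~$k$ is large enough that $\tilde{V}_k\subseteq\varphi(U)$. For general~$K$ it covers~$K$ by finitely many charts, writes~$K$ as a finite union of compact pieces each lying in a chart, applies the single-chart case to each piece, and takes the union of the resulting nested sequences. Your argument is considerably shorter and cleaner; the paper's has the minor virtue of using only the local Euclidean structure rather than appealing to a metrization theorem or a Riemannian metric, at the cost of a more fiddly finite-cover-and-union step.
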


\begin{proof}
	For $K$ contained in some chart $(U,\varphi)$, we have
	\[
		\varphi(K) =
		\bigcap_{k=1}^{\infty} 
		\tilde{V}_{k},
		\quad
		\text{where }
		\tilde{V}_{k} :=
		\bigcup_{x\in \varphi(K)}
		B_{\frac{1}{k}} (x).
	\]
	If we then let $\frac{1}{N}$ be smaller than the distance of the compact set $\varphi(K)$ to the closed set \mbox{$\RR^n \setminus \varphi(U)$,} then for $k\geq N$ we have $\tilde{V}_{k} \subseteq \varphi(U)$, and hence
	\[
		K =
		\bigcap_{k=N}^{\infty} 
		V_{k},
		\quad
		\text{where }
		{V}_{k} :=
		\varphi^\mi (\tilde{V}_{k}).
	\]
	Now, for arbitrary $K$, take finitely many open sets $U_{1},\ldots, U_l$ which cover $K$ such that $\overline{K\cap U_{i}}$ is contained in a chart. From the above, we get (after re-indexing)
	\[
		K	= \overline{K\cap U_{1}} \cup \ldots \cup \overline{K\cap U_l}
			= \left( \bigcap_{k=1}^{\infty} V^1_{k} \right) \cup \ldots \cup \left(\bigcap_{k=1}^{\infty} V^l_{k}\right)
			\subset \bigcap_{k=1}^{\infty} \left( V^1_{k}  \cup \ldots \cup V^l_{k}\right).
	\]
	Since each family $\{V^i_{n}\}_{n}$ is nested, we also have
	\[
		\left( \bigcap_{k=1}^{\infty} V^1_{k} \right) \cup \ldots \cup \left(\bigcap_{k=1}^{\infty} V^l_{k}\right)	\supset \bigcap_{k=1}^{\infty} \left( V^1_{k}  \cup \ldots \cup V^l_{k}\right),
	\]
	and hence
		$K	= \bigcap_{k=1}^{\infty} \left( V^1_{k}  \cup \ldots \cup V^l_{k}\right).$
\end{proof}

\begin{lem}\label{po1-for-compact}
	For $K$ a compact subset of a manifold $M$ and $\{U_{i}\}_{i=1}^l$ an open cover of $K$ in $M$, there exist smooth compactly supported functions $\rho_{1},\ldots, \rho_l\colon  M \to [0,1]$ such that $\mathsf{supp}(\rho_{i})\subseteq U_{i}$ and $\sum_{i=1}^l \rho_{i}(p) = 1$ for all $p\in K$.
\end{lem}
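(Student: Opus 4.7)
The strategy is a standard partition-of-unity construction: first shrink the cover, then build bump functions with smooth Urysohn, then normalize. The only subtlety is to ensure compact support and a smooth normalization.

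First I would shrink the cover: for each $p\in K$, pick some $i$ with $p\in U_i$ and a chart neighborhood $B_p$ of $p$ whose closure $\overline{B_p}$ is compact and contained in $U_i$. Since $K$ is compact, finitely many such $B_p$ cover $K$. Grouping them by the index $i$ to which they were assigned yields, for each $i$, a finite union $W_i$ of such charts with $\overline{W_i}$ compact, $\overline{W_i}\subseteq U_i$, and $\{W_i\}_{i=1}^l$ still covering $K$.

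Next, applying smooth Urysohn on the manifold $M$ to the disjoint closed sets $\overline{W_i}$ and $M\setminus U_i$ (which can be separated inside a chart using the Euclidean smooth Urysohn and then glued; this is standard), I obtain smooth functions $\eta_i\colon M\to[0,1]$ with $\eta_i\equiv 1$ on $\overline{W_i}$ and $\mathsf{supp}(\eta_i)$ compact and contained in $U_i$. Set $\tilde{\rho}:=\sum_{i=1}^l \eta_i$; since the $W_i$ cover $K$, we have $\tilde{\rho}(p)\geq 1$ for every $p\in K$, and the open set $V:=\{p\in M:\tilde{\rho}(p)>\tfrac{1}{2}\}$ contains $K$.

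Now I apply smooth Urysohn once more to produce $\psi\colon M\to[0,1]$ smooth with $\psi\equiv 1$ on $K$ and $\mathsf{supp}(\psi)$ compact and contained in $V$. I then define
\[
\rho_i(p):=\begin{cases}\dfrac{\psi(p)\,\eta_i(p)}{\tilde{\rho}(p)}&\text{if }p\in V,\\ 0 & \text{otherwise.}\end{cases}
\]
On $V$ the denominator is smooth and strictly positive, so $\rho_i$ is smooth there; outside $\mathsf{supp}(\psi)$ it vanishes; since $\mathsf{supp}(\psi)$ is compactly contained in $V$, these two descriptions patch smoothly on all of $M$. By construction $\rho_i$ is $[0,1]$-valued, $\mathsf{supp}(\rho_i)\subseteq \mathsf{supp}(\eta_i)\subseteq U_i$ is compact, and for every $p\in K$ we have $\psi(p)=1$, so $\sum_{i=1}^l \rho_i(p)=\tilde{\rho}(p)/\tilde{\rho}(p)=1$.

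The main obstacle is really just the preliminary shrinking step and citing smooth Urysohn on manifolds; both are routine but need care because $M$ is not assumed compact. Once $\overline{W_i}\subseteq U_i$ with $\overline{W_i}$ compact is in hand, the bump functions and the normalization proceed exactly as in the Euclidean case.
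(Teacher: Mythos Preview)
Your argument is correct. Both proofs start by shrinking the cover to precompact sets $W_i$ with $\overline{W_i}\subseteq U_i$ that still cover $K$, but from there the routes diverge. The paper adds $U_0:=M\setminus K$ to the list, invokes a global smooth partition of unity on $M$ subordinate to $\{M\setminus K\}\cup\{W_i\}_{i=1}^{l}$ as a black box, and then simply discards the $0$th function: since $\mathsf{supp}(\rho_0)\subseteq M\setminus K$, the remaining $\rho_1,\ldots,\rho_l$ already sum to $1$ on $K$, and compact support comes for free from $\overline{W_i}$ being compact. You instead build the bump functions $\eta_i$ by hand via smooth Urysohn and normalize explicitly with the auxiliary cutoff $\psi$. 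Your approach is more self-contained (it only needs smooth Urysohn, not the full partition-of-unity theorem), while the paper's is shorter once that theorem is available and avoids the separate normalization step.
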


\begin{proof}
	With $U_{0} := M\setminus K$, take a partition of unity $\{\rho_{i}\}_{i=0}^l$ of $M$ subordinate to the cover $\{U_{i}\}_{i=0}^l$. Since $M = U_{0} \cup U_{1} \cup \ldots \cup U_l$, we get from \cite[Lemma~1.4.8]{Conlon:DiffMa} that there exists an open cover $\{V_{i}\}_{i=0}^{l}$ of $M$ with $\overline{V}_{i} \subseteq U_{i}$ for all~$0\leq i \leq l$. Now since $\overline{K\cap V_{i}} \subseteq \overline{V}_{i} \subseteq U_{i}$ for $i\neq 0$, and $\overline{K\cap V_{i}}\subseteq K$ is compact, we know by \cite[Prop.~4.31]{Folland:RA} that there exists a precompact open set $W_{i}$ such that
	\[
		\overline{K\cap V_{i}} \subseteq W_{i} \subseteq \overline{W_{i}} \subseteq U_{i}.
	\]
	Note that the collection of $W_{i}$'s covers all of $K$, so that we can take a smooth partition of unity $\{\rho_{i}\}_{i=0}^l$ of $M$ which is subordinate to $\{M\setminus K\}\cup\{W_{i}\}_{i=1}^l$. In particular, since $W_{i}$ is precompact and $\mathsf{supp}(\rho_{i})\subseteq W_{i}$ for $i>0$, we know that those $\rho$'s have compact support. Moreover, it follows from $\mathsf{supp}(\rho_{0} )\subseteq M\setminus K$ that for $p\in K$
\begin{equation*}
		1 = \sum_{i=0}^l \rho_{i}(p) = \sum_{i=1}^l \rho_{i}(p).\qedhere
\end{equation*}
\end{proof}

\begin{lem*}[\Cref{Fmolli}]	
	For $M$ and $S$ as specified at the beginning of \Cref{sec:ellops}, and any $K\subseteq M$ compact, there exist operators $F_{t}\colon  \mathrm{L}^2 (K; S) \to \mathrm{L}^2 (M; S)$ for sufficiently small $\epsilon > t > 0$ which satisfy
	\begin{enumerate}[label=\textup{(\arabic*)}]
		\item $\norm{F_{t}}\leq C$ for some constant $C$ and all $t$,
		\item $\forall  u\in \mathrm{L}^2 (K; S): \, \lim_{t\to 0} F_{t} u = u$ in $\mathrm{L}^2 (M; S)$,
		\item $\forall  u\in \mathrm{L}^2 (K; S):$ $F_{t} u$ is smooth with compact support, and
		\item for any differential operator $D$ on $M$, $\comm{D}{F_{t}}$ extends to a bounded operator \linebreak[4]\mbox{$\mathrm{L}^2 (K; S) \to \mathrm{L}^2 (M; S)$,} and its norm is bounded independent of $t$.
	\end{enumerate}
\end{lem*}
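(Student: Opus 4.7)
The plan is to patch the Euclidean mollifiers $\tilde{F}_t$ from the preceding lemma into a global operator using a partition of unity subordinate to a trivialising cover of $K$. First I would cover $K$ by finitely many charts $(U_i,\varphi_i)_{i=1}^l$ that also admit smooth fibrewise isometric trivialisations $\Psi_i$ of $S_{|U_i}$. By \Cref{po1-for-compact}, fix a partition of unity $\{\rho_i\}_{i=1}^l$ with $\mathsf{supp}(\rho_i)\subseteq U_i$ and $\sum_i \rho_i\equiv 1$ on $K$. Writing $\Phi_i := \Psi_i^*$, extended by zero outside $\varphi_i(U_i)$, and letting $T_i$ denote componentwise application of $\tilde{F}_t$ to $\CC^k$\=/valued functions on $\RR^n$, I would define
\[
    F_t^{(i)} u := \Phi_i^{-1}\bigl[T_i\bigl(\Phi_i(\rho_i u)\bigr)\bigr],
    \qquad
    F_t := \sum_{i=1}^l F_t^{(i)}.
\]
Since $\mathsf{supp}(\rho_i)$ is compactly contained in $U_i$ and $\mathsf{supp}(\phi_t)$ shrinks as $t\to 0$, for all sufficiently small $t$ the function $T_i(\Phi_i(\rho_i u))$ is supported inside $\varphi_i(U_i)$, so $\Phi_i^{-1}$ applied to it yields a smooth compactly supported section of $S_{|U_i}$, extended by zero to $M$.

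Properties (1)--(3) then follow from the corresponding properties of $\tilde{F}_t$ combined with the Radon--Nikodym bound of \Cref{rmk:standAssu-L}. Specifically, each $F_t^{(i)}$ factors as $\Phi_i^{-1}\circ T_i\circ \Phi_i\circ M_{\rho_i}$, where the outer two factors are bounded by $\sqrt{L}$, $T_i$ is bounded by $1$ by \Cref{tildeFt:norm}, and $M_{\rho_i}$ is bounded by $1$; this gives (1). Smoothness and compact support come from \Cref{tildeFt:SmoothCpct} and \Cref{tildeFt:supp}, giving (3). Property (2) follows from $\sum_i \rho_i u = u$ in $L^2$\=/norm together with the componentwise strong convergence in \Cref{tildeFt:1}.

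The main obstacle is property (4). Using the above factorisation and Leibniz,
\[
    [D, F_t^{(i)}] = \bigl[D, \Phi_i^{-1} T_i \Phi_i\bigr]\circ M_{\rho_i} + \Phi_i^{-1} T_i \Phi_i\circ [D, M_{\rho_i}].
\]
The second summand is uniformly bounded by (1) and \Cref{commDMisbdd}. For the first, every section in the image of $M_{\rho_i}$ is supported in $\mathsf{supp}(\rho_i)\subseteq U_i$, and because differential operators preserve supports the entire computation takes place inside $U_i$. Transferring via $\Phi_i$, the question reduces to bounding $[\tilde{D}_i, T_i]$ applied to $\CC^k$\=/valued functions supported in $\Phi_i(\mathsf{supp}(\rho_i))$, where \mbox{$\tilde{D}_i = \sum_j A_i^j\partial_j + B_i$} is the local matrix\=/valued operator from \Cref{DiffOpLook}. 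I would pick a smooth $\RR^n$\=/cutoff equal to $1$ on a neighbourhood of $\Phi_i(\mathsf{supp}(\rho_i)) + \mathsf{supp}(\phi_{t_0})$ (for some fixed small $t_0$) and supported in $\varphi_i(U_i)$, then multiply the coefficients $A_i^j, B_i$ by this cutoff to obtain smooth compactly supported matrix\=/valued functions on all of $\RR^n$, producing a globally defined $\bar{D}_i$ that agrees with $\tilde{D}_i$ on the relevant support. On functions supported in $\Phi_i(\mathsf{supp}(\rho_i))$, the commutator $[\tilde{D}_i, T_i]$ then equals $[\bar{D}_i, T_i]$, which in turn splits (entry\=/by\=/entry in the matrix) into a finite sum of scalar commutators of the form $[f\cdot\partial_k, \tilde{F}_t]$, uniformly bounded by \Cref{tildeFt:comm}, plus multiplication\=/type commutators bounded by \Cref{tildeFt:norm}. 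Summing over $i$ yields the desired $t$\=/independent bound on $\norm{[D, F_t]}$.
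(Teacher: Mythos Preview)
Your proposal is correct and follows essentially the same route as the paper: localise via a partition of unity subordinate to trivialising charts, push through $\Psi_i$ and $\varphi_i$ to apply the Euclidean mollifiers $\tilde{F}_t$ componentwise, and handle the commutator with $D$ by the Leibniz split $[D,F_t^{(i)}]=[D,\Phi_i^{-1}T_i\Phi_i]M_{\rho_i}+\Phi_i^{-1}T_i\Phi_i[D,M_{\rho_i}]$, reducing the first piece to $[\tilde{D}_i,T_i]$ on $\RR^n$ and invoking \Cref{tildeFt:comm}. The only noteworthy difference is your explicit cutoff extending the local coefficients $A_i^j,B_i$ to $C_c^\infty(\RR^n)$ before applying \Cref{tildeFt:comm}; the paper applies that property directly to $\tilde{A}=A\circ\varphi_i^{-1}$, tacitly relying on the fact that the Schur-test estimate only sees the coefficients on a fixed compact neighbourhood of $\varphi_i(\mathsf{supp}(\rho_i))$, so your version is slightly more careful on this point but not materially different.
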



\noindent\textit{Proof of \Cref{Fmolli}.}\label{pf:Fmolli}

Take an atlas $\mathcal{A}$ of $M$ whose charts are small enough to also allow smooth trivializations of $S$ which are isometries on the fibres,
	\[
		\begin{tikzcd}
			\hphantom{M \supseteq}S_{| {U}}
			\arrow[d, "\pi", shift left=2.2ex]
			   \arrow[rr, "\approx"]{} &&  {U}\times \mathbb{C}^k \hphantom{\subseteq \RR^n } 
			\arrow[dll, "\mathrm{pr}_{1}"']{}\\
			M \supseteq  {U}   	\arrow[rr, "\approx"]{}	&&	\RR^n 
		\end{tikzcd}
	\]
	
Let $\{(U_{i}, \varphi_{i})\}_{i=1}^l$ be finitely many of those charts which cover the compact set $K$, and let $\{\rho_{i}\}_{i=1}^l$ be as in \Cref{po1-for-compact}. For our trivializations, we will write
\[
	\Psi_{i}\colon S_{| {U}_{i}} \stackrel{\approx}{\longrightarrow}  {U}_{i} \times \mathbb{C}^k, 
	\quad
	\psi_{i} := \mathrm{pr}_{2} \circ \Psi_{i}.
\]

Moreover, let $f_{i}, g_{i}=\frac{1}{f_{i}} : \RR^n \to (0,\infty)$ be such that for all $h\in C_{c}^{\infty} (\RR^n)$ and $E\subseteq {U}_{i}$ Borel, we have
\begin{align*}
	\int\limits_E h\circ\varphi_{i} \,\mathrm{d}\mu  =	\int\limits_{\varphi_{i} (E)} h \cdot f_{i} \,\mathrm{d}\lambda 
	\quad\;\text{ and }\;\quad
	\int\limits_E (h\cdot g_{i}) \circ \varphi_{i} \,\mathrm{d}\mu 	= \int\limits_{\varphi_{i} (E)} h \,\mathrm{d}\lambda .
\end{align*}
We have assumed in \Cref{rmk:standAssu-L} that $\supnorm{f_{i}},\supnorm{g_{i}}\leq L$ for some number $L$. In particular, we have for \mbox{$v\in \bigoplus_{1}^k \mathrm{L}_{c}^2 (\RR^n)$} and any $1\leq i\leq l$, 
\begin{align}\label{mitf}
	\int\limits_{U_{i}}
	\norm{
		v
		\circ
		\varphi_{i} (p)
	}_{\mathbb{C}^k}^2
	\,\mathrm{d}\mu 
	&=\int\limits_{\RR^n} 
	\norm{v
		(x)}
	_{\mathbb{C}^k}^2\cdot {f_{i}}(x) \,\mathrm{d}\lambda 
	\leq
	\norm{v
	}^2_{2}	\cdot L
	.
\end{align}
For $u\in L^2 (U_{i};S)$, since $\psi_{i}$ is isometric we get
\begin{align}
	\int\limits_{\RR^n} 
	\norm{\bigl(
		\psi_{i} \circ u \circ \varphi_{i}^\mi
		\bigr)
		(x)}
	_{\mathbb{C}^k}^2 \,\mathrm{d}\lambda 
	&=
	\int\limits_{\RR^n} 
		\norm{
			\bigl(
			u \circ \varphi_{i}^\mi
			\bigr)
			(x)
		}
		_{S_{\varphi_{i}^{-1} (x)}}^2 \,\mathrm{d}\lambda 
		\notag
	\\
	&=
	\int\limits_{U_{i}} 
	\norm{u (p)}
	_{S_{p}}^2 g_{i} \bigl(\varphi_{i} (p)\bigr)  \,\mathrm{d}\mu 
	\leq
	\norm{u}^2_{2} \cdot L.
	\label{psiuintermsofu}
\end{align}

For $1\leq i \leq l$, we define
\[
	\begin{tikzcd}
			&
			&
			&
			\bigoplus_{1}^k \mathrm{L}^2 (\RR^n)\arrow[d, phantom, "\text{\rotatebox[origin=c]{90}{$\subseteq$}}"]
			&
			\mathrm{L}^2 (U_{i};S)\arrow[d, phantom, "\text{\rotatebox[origin=c]{90}{$\subseteq$}}"]
			\\[-10pt]
		F_{t}^i\colon 
		&
		\mathrm{L}_{c}^2 (U_{i};S)
		\arrow[r]
		&
		\bigoplus_{1}^k \mathrm{L}_{c}^2 (\RR^n)
		\arrow[r]
		&
		\bigoplus_{1}^k C_{c}^\infty (\RR^n)
		\arrow[r]
		&
		\Gamma_{c}^\infty (U_{i};S)
		\\[-20pt]
		&
		u
		\arrow[r, mapsto]
		&
		\psi_{i} \circ u \circ \varphi_{i}^\mi,
		&
		v \arrow[r, mapsto]
		&
		\Psi_{i}^\mi (\,\cdot\,, v\circ \varphi_{i})
		\\[-20pt]
		&
		&
		\oplus_{j = 1}^k w_{j}
		\arrow[r, mapsto]
		&
		\oplus_{j = 1}^k \tilde{F}_{t} w_{j}
		&
	\end{tikzcd}
\]

Notice that, indeed, $F_{t}^i$ takes values in $\Gamma_{c}^\infty (U_{i};S)$: since $\varphi_{i}$ is a diffeomorphism, $\psi_{i} \circ u \circ \varphi_{i}^\mi$ is compactly supported when $u$ is, and in particular, all of its component functions are compactly supported. By \Cref{tildeFt:SmoothCpct} of $\tilde{F}_{t}$, $\tilde{F}_{t} w_{j}$ is smooth, and by \Cref{tildeFt:supp}, it is compactly supported when $w_{j}$ is. Since $\varphi_{i}$ and $\Psi_{i}$ are smooth, so is $\Psi_{i}^\mi (\,\cdot\,, v\circ \varphi_{i})$ for smooth $v$, and again, since $\varphi_{i}$ is a diffeomorphism, we conclude that $\Psi_{i}^\mi (\,\cdot\,, v\circ \varphi_{i})$ has compact support for compactly supported $v$.

Now let
\[
	F_{t}\colon  \mathrm{L}^2 (K;S) \to \mathrm{L}^2 (M;S),\quad F_{t} u := \sum_{i=1}^l F_{t}^i ( \rho_{i} \cdot u).
\]

By the above explanation, $F_{t}$ actually takes values in $\Gamma^\infty_{c} (M;S)$ because the $\rho_{i}$ are compactly supported. Hence, $F_{t}$ satisfies \Cref{Ft:SmoothCpct}, and we need to check the other properties. By abuse of notation, we will write $\tilde{F}_{t}$ for the operator $\oplus_{1}^k \tilde{F}_{t}$.

\begin{description}[leftmargin=*]
	\item[ad Property \emph{\ref{Ft:norm}}] For $u\in \mathrm{L}^2 (U_{i};S)$, we compute		
	\begin{align}
		\norm{F_{t}^i u}_{2}^2 
		&=
		\int\limits_{U_{i}}
			\norm{
				\Psi_{i}^\mi 
				\Bigl(
					p,  
					\tilde{F}_{t}
					\bigl(
						\psi_{i} \circ u \circ \varphi_{i}^\mi
					\bigr)
					\circ
					\varphi_{i}(p)
				\Bigr)
			}_{S_{p}}^2
		\,\mathrm{d}\mu 
		\notag\\
		&=
		\int\limits_{U_{i}}
		\norm{
			\tilde{F}_{t}
				\bigl(
					\psi_{i} \circ u \circ \varphi_{i}^\mi
				\bigr)
				\circ
				\varphi_{i} (p)
		}_{\mathbb{C}^k}^2
		\,\mathrm{d}\mu 
		\tag*{as $\Psi_{i}(p,\,\cdot\,)$ is an isometry}
		\\
		&
		\leq
		\norm{\tilde{F}_{t}
			\bigl(
			\psi_{i} \circ u \circ \varphi_{i}^\mi
			\bigr)
		}^2_{2}	\cdot L
		\tag*{by Eq.~(\ref{mitf})}\\
		&\leq
		\norm{\psi_{i} \circ u \circ \varphi_{i}^\mi}^2_{2} \cdot L
		\leq 
				\norm{u}^2_{2} \cdot L^2
		\tag*{since $\norm{\tilde{F}_{t}}\leq 1$ and by Eq.~(\ref{psiuintermsofu}).}
	\end{align}
	Hence $\norm{F_{t}^i} 	\leq L$,	so that
	\begin{align*}
		\norm{F_{t}}
		=
		\sup_{\norm{u}_{2} \leq 1}{\norm{\sum_{i=1}^l F_{t}^i (\rho_{i} u)}_{2}}
		\leq
		\sum_{i=1}^l \sup_{\norm{u}_{2} \leq 1}{\norm{F_{t}^i (\rho_{i} u)}_{2}}
		\leq
		\sum_{i=1}^l \norm{F_{t}^i}
		\leq
		l\cdot L =: C.
	\end{align*}	
		
	\item[ad Property \emph{\ref{Ft:1}}] As $\Psi_{i} (p,\,\cdot\,)$ is an isometry, we have for $u$ in $\mathrm{L}^2 (U_{i};S)$, $p\in U_{i}$:
	\begin{align*}
	\norm{\bigl(F_{t}^i u -u\bigr) (p)}_{S_{p}}
	&=
	\norm{
	\Psi_{i}^\mi 
	\Bigl(p,  
		\tilde{F}_{t}
		\bigl(
			\psi_{i} \circ u \circ \varphi_{i}^\mi
		\bigr)
		\circ
		\varphi_{i}(p)
	\Bigr)
		-
		u(p)
	}_{S_{p}}\\
	&=
	\norm{
		\tilde{F}_{t}
			\bigl(
			\psi_{i} \circ u \circ \varphi_{i}^\mi
			\bigr)
			\circ
			\varphi_{i}(p)
		-
		\bigl(\psi_{i} \circ u \circ \varphi_{i}^\mi\bigr)(\varphi_{i}(p))
	}_{\mathbb{C}^k}.
	\end{align*}
	
	By \Cref{mitf}, it therefore follows that
	\begin{align*}
		\norm{F_{t}^i u -u}_{2}^2 
		&\leq
		\norm{
			\tilde{F}_{t}
			\bigl(
			\psi_{i} \circ u \circ \varphi_{i}^\mi
			\bigr)
			-
			\bigl(\psi_{i} \circ u \circ \varphi_{i}^\mi\bigr)
		}^2_{2}	\cdot L,
	\end{align*}
	so that \Cref{tildeFt:1} of $\tilde{F}_{t}$ implies that $\lim_{t\to 0}F_{t}^i u = u$ in $\mathrm{L}^2$\=/norm. Therefore, for arbitrary $u\in \mathrm{L}^2 (K;S)$,
	\[
		\norm{ F_{t} u - u}_{2}
		=
		\norm{ \sum_{i=1}^l (F_{t}^i (\rho_{i} u) - \rho_{i} u)}_{2}
		\leq
		\sum_{i=1}^l 
		\norm{ F_{t}^i (\rho_{i} u) - \rho_{i} u}_{2}
		\longrightarrow 0.
	\]

	\item[ad Property \emph{\ref{Ft:bdd}}] Suppose $D$ is a differential operator acting on the sections of $S$. Since $\rho_{i} u$ is supported in $U_{i}$ for $u\in  \Gamma^\infty (M;S)$, we know that $D(\rho_{i} u) \in \Gamma^\infty (U_{i};S)$ by \hyperref[DiffOpPreservesSupp]{Property~a)} of differential operators. Therefore, $F_{t}^i \bigl(D (\rho_{i} u)\bigr)$ also makes sense, and we can write
	\begin{align*}
		\comm{F_{t}}{D}u
		&=
		\sum_{i=1}^l F_{t}^i \bigl( \rho_{i} Du\bigr) - D \bigl( F_{t}^i (\rho_{i} u)\bigr)
		\\&=
		\sum_{i=1}^l F_{t}^i \bigl( \rho_{i} Du\bigr) - F_{t}^i \bigl(D (\rho_{i} u)\bigr) + F_{t}^i \bigl(D (\rho_{i} u)\bigr)  - D \bigl( F_{t}^i (\rho_{i} u)\bigr)\\
		&=
		\sum_{i=1}^l F_{t}^i \comm{\M{\rho_{i}}}{D}u + \comm{F_{t}^i}{D} (\rho_{i}u),
	\end{align*}
	that is,
	\begin{align}
		\comm{F_{t}}{D} = \sum_{i=1}^l F_{t}^i \comm{\M{\rho_{i}}}{D} + \comm{F_{t}^i}{D}\M{\rho_{i}}.
	\end{align}
	
	In order to check that $\comm{F_{t}}{D}$ extends to an operator that is bounded independent of $t$, we will show that $F_{t}^i \comm{\M{\rho_{i}}}{D}$ and $\comm{F_{t}^i}{D}\M{\rho_{i}}$ do. 
	As was shown in \Cref{commDMisbdd}, $\comm{\M{\rho_{i}}}{D}$ is a bounded operator on $\mathrm{L}^2 (K;S)$,
	and since $F_{t}^i$ is bounded independent of $t$ (namely by $L$, as was shown above), so is $F_{t}^i \comm{\M{\rho_{i}}}{D}$.
	It remains to show that $u \mapsto \comm{F_{t}^i}{D} (\rho_{i} u)$ for a fixed but arbitrary \mbox{$1\leq i \leq l$} is bounded independent of $t$. It suffices to consider those $D$ that (locally)  look like only one of the summands in \Cref{DiffOpLook}. First, recall that for a $\mathbb{C}^k$-vector valued function $w$ on $\RR^n$, we have
	\[
		\psi_{i}
		\circ 
		\Psi_{i}^\mi
			\bigl(\,\cdot\,,
			w(\,\cdot\,)
			\bigr)
		=
		w(\,\cdot\,),
	\]
	so for $u_{i} := \rho_{i} u$ and $p\in U_{i}$, we compute
	\begin{align*}
		\left(F_{t}^i D u_{i} \right)(p)
		&=
		\Psi_{i}^\mi 
		\Bigl(
			p,  
			\tilde{F}_{t}
			\bigl(
				\psi_{i} \circ (Du_{i}) \circ \varphi_{i}^\mi
			\bigr)
			\circ
			\varphi_{i}(p)
		\Bigr)\\
		&=
		\Psi_{i}^\mi 
		\biggl(
			p,  
			\tilde{F}_{t}
			\Bigl(
				(A\circ\varphi_{i}^\mi) \cdot {\partial_{j} \bigl(\psi_{i} \circ u_{i}\circ \varphi_{i}^\mi\bigr)}
			\Bigr)
			\circ
			\varphi_{i}(p)
		\biggr)
	\end{align*}
	and
		\begin{align*}
		\left( D F_{t}^i u_{i} \right)(p)
		&=
		\Psi_{i}^\mi 
		\Bigl(
			p,  
			A(p) 
			\partial_{j} 
			\bigl(
				\psi_{i} \circ (F_{t}^i u_{i}) \circ \varphi_{i}^\mi
			\bigr)_{|\varphi_{i}(p)}
		\Bigr)\\
		&=
		\Psi_{i}^\mi 
			\biggl(
				p,  
				A(p) 
				\partial_{j} 
				\bigl(
					\tilde{F}_{t} (\psi_{i} \circ u_{i}\circ \varphi_{i}^\mi)
				\bigr)_{|\varphi_{i}(p)}.
			\biggr)
	\end{align*}
	If we write $\tilde{A}:= A\circ\varphi_{i}^\mi$ and $v_{i}:=\psi_{i} \circ u_{i}\circ \varphi_{i}^\mi$, then this means
	\begin{align*}
	\norm{\comm{F_{t}^i}{ D} u_{i} (p)}_{S_{p}}
	=
	\norm{
	\left(
		\tilde{F}_{t}
		\Bigl(
			\tilde{A} \cdot {\partial_{j} v_{i}}
		\Bigr)
		-
		\tilde{A}\cdot
		\partial_{j} 
		\bigl(
		\tilde{F}_{t} v_{i}
		\bigr)
	\right) \circ \varphi_{i}(p)
	}_{\mathbb{C}^k}.
\end{align*}
	Hence by \Cref{mitf},
	\begin{align*}
		\norm{\comm{F_{t}^i}{ D} u_{i}}_{2}^2
		\leq
		\norm{
			\comm{\tilde{F}_{t}}{\tilde{A} \cdot \partial_{j} }v_{i}
		}^2_{2}
		\cdot L.
	\end{align*}
	Note that $v_{i}$ is supported in the compact set $\kappa:=\varphi_{i} \bigl(\mathsf{supp}(\rho_{i})\bigr)\subseteq \RR^n$. Therefore, \Cref{tildeFt:comm} of $\tilde{F}_{t}$ implies that \mbox{$\comm{\tilde{F}_{t}}{\tilde{A} \cdot \partial_{j} }$} extends to an operator on $\oplus_{1}^{k} L^2(\kappa)$ which is bounded by, say, $c$ independent of $t$. Since moreover
	\begin{align*}
		\norm{v_{i}}^2_{2}
		=
		\norm{\psi_{i} \circ u_{i}\circ \varphi_{i}^\mi}^2_{2}
		\stackrel{\text{(\ref{psiuintermsofu})}}{\leq}
		\norm{u_{i}}^2_{2} \cdot L,
	\end{align*}
	we conclude
	\[
		\norm{\comm{F_{t}^i}{ D} u_{i}}^2_{2}
		\leq
		c\cdot \norm{v_{i}}^2_{2} \cdot L
		\leq 
		c\cdot \norm{u_{i}}_{2}^2\cdot L^2
		\leq
		c\cdot \norm{u}_{2}^2\cdot L^2.
	\]
	As neither $L$ nor $c$ depend on $t$, and this holds true for every $1\leq i\leq l$, we are done.\qed
\end{description}

\begin{rmk}
	Note that we do not mind our construction in the proof of \Cref{Fmolli} to be highly dependent on our choice of atlas and partition of unity.
\end{rmk}

\sloppy


\begin{thebibliography}{9}


\bibitem{Blackadar:OA} B.\ Blackadar. \textit{Operator algebras. Theory of {$C^*$}-algebras and von {N}eumann algebras}. July 2013. \textsc{url}: \href{http://wolfweb.unr.edu/homepage/bruceb/Cycr.pdf}{http://wolfweb.unr.edu/homepage/bruceb/Cycr.pdf}
(visited on 12/04/2015).

\bibitem{deBra:SWThm} L.\ de Branges. ``The {S}tone-{W}eierstrass theorem''. In: \textit{Proc. Amer. Math. Soc.} 10 (1959), pp.\ 822-824.

\bibitem{Conlon:DiffMa} L.\ Conlon. \textit{Differentiable manifolds}. Second. {Birkh\"auser Advanced Texts: Basler Lehrb\"ucher.} {Birkh\"auser Boston, Inc., Boston, MA}, {2001}, pp.\ xiv+418.

\bibitem{Conway:FA} J.\ B.\ Conway. \textit{A course in functional analysis}. Second. Vol.\ 96. {Graduate Texts in Mathematics}. {Springer-Verlag, New York}, {1990}, pp.\ {xvi+399}.

\bibitem{Folland:RA} G.\ B.\ Folland. \textit{Real analysis. Modern techniques and their applications}. Second. {Pure and Applied Mathematics (New York)}. {John Wiley \& Sons, Inc., New York}, {1999}, pp.\ {xvi+386}.

\bibitem{HalSun:BddIntOps} P.\ R.\ Halmos and V.\ S.\ Sunder. \textit{Bounded integral operators on {$L\sp{2}$} spaces}. Vol.\ 96. {Ergebnisse der Mathematik und ihrer Grenzgebiete}. {Springer-Verlag, Berlin-New York}, {1978}, pp.\ {xv+132}.

\bibitem{HigRoe:KHom} N.\ Higson and J.\ Roe. \textit{Analytic {$K$}-homology}. {Oxford Mathematical Monographs}. {Oxford University Press, Oxford}, {2000}, pp.\ {xviii+405}.

\bibitem{LM:SpinGeom} H.\ B.\ Lawson Jr. and M.-L.\ Michelsohn. \textit{Spin geometry}. Vol.\ 38. {Princeton Mathematical Series}. {Princeton University Press, Princeton, NJ}, {1989}, pp.\ {xii+427}.

\bibitem{ReedSimon:Methods} M.\ Reed and B.\ Simon. \textit{Methods of modern mathematical physics. {I}}. Second. {Academic Press, Inc.\ [Harcourt Brace Jovanovich, Publishers], New York}, {1980}, pp.\ {xv+400}.

\bibitem{Roe:Elliptic} J.\ Roe. \textit{Elliptic operators, topology and asymptotic methods}. Second. Vol.\ 395. {Pitman Research Notes in Mathematics Series}. {Longman, Harlow}, {1998}, pp.\ {ii+209}.

\bibitem{Trench:RA} W.\ F.\ Trench. \textit{Introduction to Real Analysis}. Dec.\ 2013. \textsc{url}: \href{http://ramanujan.math.trinity.edu/wtrench/texts/TRENCH_REAL_ANALYSIS.PDF}{http://ramanujan.math.trinity.edu/wtrench/texts/trench\_real\_analysis.pdf} (visited on 03/20/2017).

\bibitem{Wells:DiffAna} R.\ O.\ Wells Jr. \textit{Differential analysis on complex manifolds}. Third. Vol.\ 65. {Graduate Texts in Mathematics}. {Springer, New York}, {2008}, pp.\ {xiv+299}.

\bibitem{Werner:Ana} D.\ Werner. \textit{Einf\"uhrung in die h\"ohere {A}nalysis}. corrected. {Springer-Lehrbuch.} {Springer-Verlag, Berlin}, {2009}, pp.\ {x+388}.
\end{thebibliography}
\end{document}